\colorlet{darkgreen}{green!70!black}
\newtheorem{theorem}{Theorem}[section]
\newtheorem*{main}{Main Theorem}
\newtheorem{lemma}[theorem]{Lemma}
\newtheorem{conjecture}[theorem]{Conjecture}
\newtheorem{questions}[theorem]{Questions}
\newtheorem{corollary}[theorem]{Corollary}
\newtheorem{proposition}[theorem]{Proposition}
\theoremstyle{definition}
\newtheorem{definition}[theorem]{Definition}
\newtheorem{example}[theorem]{Example}
\newtheorem{observation}[theorem]{Observation}
\newtheorem{remark}[theorem]{Remark}
\title{L-space knots have no essential Conway spheres}
\date{}
\author{Tye Lidman}
\address{Department of Mathematics \\ North Carolina State University}
\email{tlid@math.ncsu.edu}
\author{Allison H. Moore}
\address{Department of Mathematics \& Applied Mathematics \\ Virginia Commonwealth University}
\email{moorea14@vcu.edu}
\author{Claudius Zibrowius}
\address{Fakultät für Mathematik \\ Universität Regensburg}
\email{claudius.zibrowius@posteo.net}
\newcommand{\vc}[1]{\vcenter{\hbox{#1}}}%
\newcommand{\mypic}[2]{%
  \newcommand{#2}{%
    \vc{%
      \includegraphics[page=#1]%
      {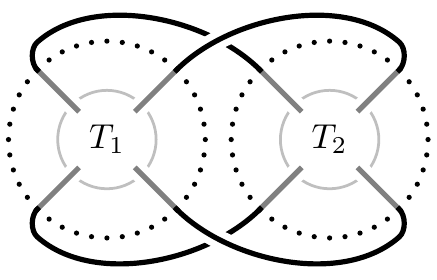}%
    }%
  }%
}%
\DeclareMathOperator{\Alex}{\hat{A}}
\newcommand{\GeneralAlex}{\mathfrak{A}}
\DeclareMathOperator{\CFTd}{CFT^\partial}
\DeclareMathOperator{\HFT}{HFT}
\newcommand{\Rat}{\mathfrak{r}}
\DeclareMathOperator{\mr}{mr}
\DeclareMathOperator{\HFKhat}{\widehat{HFK}}
\DeclareMathOperator{\HFL}{\widehat{HFL}}
\DeclareMathOperator{\HFhat}{\widehat{HF}}
\DeclareMathOperator{\HF}{HF}
\DeclareMathOperator{\HFr}{\widetilde{HF}}
\DeclareMathOperator{\CFK}{CFK}
\DeclareMathOperator{\CFhat}{\widehat{CF}}
\DeclareMathOperator{\SFH}{SFH}
\newcommand{\Ad}{\operatorname{\mathcal{A}}^\partial}
\DeclareMathOperator{\Gen}{\mathcal{G}}
\newcommand{\field}{\mathbb{F}}
\DeclareMathOperator{\SL}{SL}
\DeclareMathOperator{\im}{im}
\DeclareMathOperator{\id}{id}
\newcommand{\FourPuncturedSphere}{S^2_4}
\newcommand{\PuncturedPlane}{\mathbb{R}^2\smallsetminus \mathbb{Z}^2}
\newcommand{\QPI}{\operatorname{\mathbb{Q}P}^1}
\newcommand{\LiftG}{\tilde{\gamma}}
\newcommand{\LiftT}{\tilde{\vartheta}}
\newcommand{\LiftX}{\tilde{x}}
\newcommand{\ParaCovering}{P}
\newcommand{\Rational}{\mathbf{r}}
\newcommand{\Special}{\mathbf{s}}
\newcommand{\Gammai}{\textcolor{red}{\Gamma_1}}
\newcommand{\Gammaii}{\textcolor{blue}{\Gamma_2}}
\newcommand{\CC}[1]{\textcolor{red}{C_{#1}}}
\newcommand{\DD}[1]{\textcolor{blue}{D_{#1}}}
\DeclareMathOperator{\TEo}{\textnormal{\texttt{o}}}
\DeclareMathOperator{\TEi}{\textnormal{\texttt{i}}}
\DeclareMathOperator{\TEj}{\textnormal{\texttt{j}}}
\DeclareMathOperator{\TEk}{\textnormal{\texttt{k}}}
\DeclareMathOperator{\TEl}{\textnormal{\texttt{l}}}
\DeclareMathOperator{\TEI}{\textnormal{\texttt{1}}}
\DeclareMathOperator{\TEII}{\textnormal{\texttt{2}}}
\DeclareMathOperator{\TEIII}{\textnormal{\texttt{3}}}
\DeclareMathOperator{\TEIV}{\textnormal{\texttt{4}}}
\def\F{\mathbb{F}}
\def\Q{\mathbb{Q}}
\def\R{\mathbb{R}}
\def\Z{\mathbb{Z}}
\def\co{\colon\thinspace\relax}
\newcommand{\Contiguous}[1]{\mathcal{C}_{#1}}
\newcommand{\tikzscale}{0.85}
\begin{document}

\begin{abstract}
We prove that L-space knots do not have essential Conway spheres with the technology of peculiar modules, a Floer theoretic invariant for tangles.
\end{abstract}

\maketitle
\section{Introduction}\label{sec:intro}

We consider the problem of whether Dehn surgery along a knot $K$ in $S^3$ produces an L-space.  L-spaces are closed, oriented three-manifolds with the simplest possible Heegaard Floer homology.  This class includes, for example, all three-manifolds with finite fundamental group \cite[Proposition 2.3]{HFKlens}.  While Floer theory has become a very effective modern tool for answering questions about Dehn surgery, a geometric characterization of knots admitting L-space surgeries remains a difficult outstanding problem. 
Ozsváth and Szabó established a structure theorem for the knot Floer homology of such knots \cite{HFKlens}, which allows one to show that they are fibered \cite{Ghiggini, Ni} and prime \cite{Krcatovich, HeddenWatson, BaldwinVelaVick}.  
Both properties are inherently statements about surfaces in the knot exterior. 
It is natural to ask whether the existence of certain essential surfaces in the complement of a knot can obstruct non-trivial surgeries yielding L-spaces. Recall that a Conway sphere is a two-sphere intersecting the knot transversely in four points. It is \emph{essential} if the corresponding four-punctured sphere is incompressible in the knot exterior. 

The main purpose of this article is to prove:

\begin{main}
A knot in $S^3$ with a non-trivial L-space surgery admits no essential Conway sphere.  
\end{main}

This answers affirmatively the conjecture posed by the first and second authors in \cite{LidmanMoore}.
As an immediate corollary, we obtain:
\begin{corollary}
	Conway mutation preserves L-space knots.\qed
\end{corollary}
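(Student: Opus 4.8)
The plan is to show that a Conway mutant $K'$ of an L-space knot $K$ must be isotopic to $K$; the corollary then follows at once. Let $S$ be the Conway sphere along which $K'$ is obtained from $K$. Since $K$ admits a non-trivial L-space surgery, the Main Theorem tells us that $S$ is not essential, so the four-punctured sphere $P=S\cap E(K)$ is compressible in the exterior $E(K)$ of $K$. A compressing disk lies in the complement of one of the two $2$-string tangles $(B_1,t_1)$, $(B_2,t_2)$ bounded by $S$, say of $(B_1,t_1)$, and its boundary is an essential curve on $P$ and hence separates the four points $S\cap K$ into two pairs. Cutting $B_1$ along this disk therefore yields two balls, each containing a single strand of $t_1$: that is, $t_1$ is a split tangle, with strands $a_1\subset B_1'$ and $a_2\subset B_1''$ lying in disjoint balls.

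I would then invoke that L-space knots are prime \cite{HFKlens,Krcatovich,HeddenWatson,BaldwinVelaVick} to control this split tangle. If neither $a_j$ is locally knotted, then $(B_1,t_1)$ is the trivial $2$-string tangle, which admits a self-homeomorphism restricting on $S$ to each of the three Conway involutions; absorbing such a homeomorphism into $B_1$ identifies $K'$ with $K$. If instead one strand, say $a_1$, is locally knotted, then $\partial B_1'$ realizes $K$ as a connected sum $K_1 \# K_2$ in which $K_1$ is the non-trivial knot carried by $a_1$; primeness forces $K_2$ to be the unknot, so $K=K_1$ and the $K_2$-side is a trivial arc. But then mutation along $S$ leaves $a_1$ — and hence $K_1$ — untouched and only rearranges that trivial arc, i.e.\ it performs a Conway mutation on the unknot $K_2$, which is again the unknot; so $K' = K_1 \# (\text{unknot}) = K$. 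Either way $K'\cong K$, and in particular $K'$ is an L-space knot.

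The one step with genuine content is this last one: converting the Main Theorem's essentiality conclusion into a split tangle and then using primeness together with the elementary facts that Conway mutations of the trivial tangle and of the unknot are trivial. I expect that to be the main — though rather mild — obstacle; the preliminaries about compressibility and about essential curves on the four-punctured sphere are routine.
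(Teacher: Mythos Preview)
Your argument is correct and is precisely the kind of fleshing-out the paper leaves implicit: the paper records this corollary with a bare \qed\ immediately after the Main Theorem, treating ``no essential Conway sphere implies every mutation returns the same knot'' as folklore.

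Two small remarks on your write-up. First, your Case~2 actually collapses to Case~1: once primeness forces the complementary summand (your $K_2$) to be the unknot, it also forces $a_2$ to be locally unknotted, since the local knot type of $a_2$ is itself a connect-summand of that unknot; hence $(B_1,t_1)$ is already the trivial $2$-string tangle and no separate appeal to ``mutants of the unknot are unknots'' is needed. Second, primeness is in fact dispensable. For \emph{any} knot with a compressible Conway sphere one can write $K=K_1\mathbin{\#}K_2\mathbin{\#}J$, where $K_1,K_2$ are the local knot types of the two split strands and $J$ is the closure by the trivial tangle of the same connectivity; one of the three mutation involutions fixes each strand setwise (and a $1$-string tangle is invariant under the $\pi$-rotation through its endpoints), while the other two swap the two strands. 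In every case $K'=K_1\mathbin{\#}K_2\mathbin{\#}J=K$ by commutativity of connected sum. So the corollary is really a statement about arbitrary knots without essential Conway spheres, not specifically L-space knots---which is presumably why the authors felt no proof was required.
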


We now describe the rough strategy for the proof of this theorem. 
Following a construction of the third author, we associate with a four-ended tangle \(T\) a decorated immersed multicurve \(\HFT(T)\) in the four-punctured sphere. 
This multicurve is a geometric realization of a bordered sutured Heegaard Floer invariant called the peculiar module of $T$, and
the Lagrangian Floer homology of two such multicurves describes the link Floer homology of a suitable tangle sum of the two corresponding tangles \cite{PQMod}. 
Furthermore, a structure theorem has been established for what the individual components of multicurves can look like \cite{PQSym}. 
Each component is one of two types: rational (which is the immersed curve invariant of a rational tangle) or special.  
See Figure~\ref{fig:HFT:example} and Section~\ref{sec:review:HFT:geography} below.   
The invariant $\HFT$ in fact detects rational tangles \cite[Theorem~6.2]{PQMod}. 
Generalizing rational tangles, a {\em split} tangle is a rational tangle with links possibly tied into either of the strands. 
A first major step towards the Main Theorem is a strengthening of this detection result to split tangles, which may be of independent interest:
\begin{theorem}\label{thm:detection:split:Intro}
	\(\HFT(T)\) detects split tangles.
\end{theorem}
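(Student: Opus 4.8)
The plan is to prove the two implications separately; the forward direction is a computation, and the converse carries the weight.

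\textbf{From split tangles to curves.}
Write a split tangle as $T=Q(K_1,K_2)$, where $Q$ is a rational tangle and $K_1,K_2\subset S^3$ are the knots tied into its two strands. I would first establish a connect-sum formula for peculiar modules: tying a knot $K$ into a strand of a tangle tensors the peculiar module with $\CFKhat(K)$ over the appropriate subalgebra of the peculiar algebra. Applied to $Q$, whose peculiar module is the rational curve $\HFT(Q)$ with the trivial one-dimensional local system, this shows that $\HFT(T)$ is supported on the \emph{same} underlying rational curve, now carrying the local system determined by the bigraded vector space $\HFKhat(K_1)\otimes\HFKhat(K_2)$ (with monodromy recording the Alexander grading). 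In particular every component is rational of a single slope and no special component occurs, so in conjunction with the structure theorem of \cite{PQSym} this pins down the normal form of $\HFT(T)$ and settles the ``only if'' direction. The connect-sum formula itself I would prove either by stacking a doubly-pointed Heegaard diagram for each $K_i$ onto a diagram for $Q$, or by decomposing $T$ along a sphere separating the $K_i$ from $Q$ and invoking the pairing theorem of \cite{PQMod}.

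\textbf{From curves to split tangles.}
Conversely, suppose $\HFT(T)$ is supported on rational curves of a single slope $s$, with arbitrary local systems. Composing $T$ with rational tangles realizing the twist generators of the mapping class group of the four-punctured sphere acts on $\HFT(T)$ by the induced homeomorphism and preserves the class of split tangles, so I may assume $s$ is the slope of the $0$-tangle. I then glue $T$ to rational tangles $Q$ and study the closures $Q\cup T\subset S^3$ through the pairing theorem: because every component of $\HFT(T)$ lies on a rational curve parallel to $s$, the Lagrangian intersection complex computing $\HFL(Q\cup T)$ carries no differential, and its Alexander and Maslov gradings are completely determined by $s$, by $\HFT(Q)$, and by the local systems. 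Choosing $Q$ so that $Q\cup T$ is a knot $K'$, this forces $\HFKhat(K')$ to have the shape of the knot Floer homology of a connected sum of a two-bridge knot with local knots, namely (thin) $\otimes\, \HFKhat(K_1)\otimes\HFKhat(K_2)$. Using the detection of connected sums and of two-bridge knots (equivalently, of rational-tangle closures) in knot Floer homology, one concludes that $K'\cong B\# K_1\# K_2$ with $B$ two-bridge and that $\HFKhat(K_1)\otimes\HFKhat(K_2)$ realizes the local system. Finally, a topological step recovers the split structure of $T$: the Conway sphere $\partial T$ splits $K'$ into the rational tangle $Q$ and $T$, and comparing this with the connected-sum/prime decomposition of $K'$ --- equivalently, passing to double branched covers, where the $Q$-side is a solid torus, so that $\Sigma_2(T)$ is a once-punctured connected sum of a solid torus with $\Sigma_2(K_1)\#\Sigma_2(K_2)$ --- shows $T=Q'(K_1,K_2)$ for a rational tangle $Q'$. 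This also specializes to, and reproves, the rational-tangle detection of \cite[Theorem~6.2]{PQMod} in the case of trivial local systems.

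\textbf{Main obstacle.}
I expect the difficulty to concentrate in the last two steps of the converse. First, one needs a knot-Floer detection result strong enough to upgrade the ``(thin)~$\otimes\,X$'' shape of $\HFKhat(K')$ to an actual connected-sum decomposition $K'\cong B\# K_1\# K_2$, and to do so compatibly with the local-system bookkeeping --- in effect a simultaneous detection of a two-bridge summand and of the local knots, across the family of closures as $Q$ varies. Second, and more delicate, is the descent from the decompositions of the closures back to the tangle $T$: this requires controlling the position of the Conway sphere $\partial T$ inside the connected sum $K'$ (respectively, making the reducing spheres of $\Sigma_2(T)$ standard and equivariant), which is where the rational side of the decomposition must be leveraged. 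By contrast, the forward direction and the exclusion of special components should follow directly from the connect-sum formula.
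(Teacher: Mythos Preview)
Your forward direction (split $\Rightarrow$ curves) is essentially the alternate strategy the paper mentions in its remark after the proof: a K\"unneth formula for $\HFT$ under disjoint union and local knotting. This is fine, though note that the paper's more precise statement (Theorem~\ref{thm:detection:split}) asserts that the local systems are \emph{trivial}, i.e.\ the multicurve is a disjoint union of parallel copies of one rational curve in various bigradings; your phrasing ``local system determined by $\HFKhat(K_1)\otimes\HFKhat(K_2)$ with monodromy recording the Alexander grading'' should be read as exactly this.

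The converse, however, has a genuine gap that you yourself flag but do not resolve. Knot Floer homology is \emph{not} known to detect connected sums: there is no theorem saying that a bigraded isomorphism $\HFKhat(K')\cong\HFKhat(B)\otimes\HFKhat(K_1)\otimes\HFKhat(K_2)$ forces $K'\cong B\# K_1\# K_2$, and there is no reason to expect one. So the step ``using the detection of connected sums\dots in knot Floer homology, one concludes that $K'\cong B\# K_1\# K_2$'' is not available. Even granting it, your descent step (locating the Conway sphere inside the connected sum, or making reducing spheres in $\Sigma_2(T)$ equivariant) is a second independent obstacle with no clear mechanism.

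The paper's route avoids both problems entirely by staying inside the tangle complement rather than passing to closures. The key observation is that ``$\HFT(T)$ consists only of rationals of slope $s$'' is equivalent to ``$\HFT(T)$ has no intersection with an arc of slope $s$'', and by \cite[Theorem~6.3]{HDsForTangles} that intersection computes the sutured Floer homology $\SFH(M,\hat\gamma)$ of the tangle complement with a specific suture set. Vanishing of $\SFH$ then invokes Juh\'asz's tautness criterion \cite[Theorem~1.4]{SurfaceDecomposition}: $(M,\gamma)$ is not taut, and a short direct analysis of what non-tautness means for this particular sutured manifold (either $R(\gamma)$ fails to be norm-minimizing, or it compresses) produces the splitting disk. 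No closure, no connected-sum detection, no equivariance argument is needed. This is the idea you are missing: translate the curve condition into vanishing of $\SFH$ for the tangle complement and use tautness.
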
  
See Theorem~\ref{thm:detection:split} for a more detailed statement of this result. 

Now, suppose that a knot \(K = T_1 \cup T_2\) is decomposed along a Conway sphere into two four-ended tangles \(T_1\) and \(T_2\) as shown in Figure~\ref{fig:tanglepairing}. 
Assuming that this Conway sphere is essential means that neither \(T_1\) nor \(T_2\) are split tangles. 
Then, using Theorem~\ref{thm:detection:split:Intro}, we show that \(\HFT(T_i)\) contains either special components or rational components of different slopes.  
As we will argue in Lemma \ref{lem:lspacepinch}, should an essential tangle decomposition \(K = T_1 \cup T_2\) exist, then a rational tangle replacement of either of $T_1$ or $T_2$ would yield an auxiliary knot \(T_1 \cup Q_{s_2}\) or \(Q_{s_1} \cup T_2\), respectively, which must also admit an L-space surgery. 
Applying such rational tangle replacements, we perform a case analysis based on the composition of rational and special components of \(\HFT(T_i)\) in order to analyze their Lagrangian Floer homology. 
In all cases we show the knot Floer homology of \(K\) violates Ozsváth and Szabó's structure theorem for the knot Floer homology of knots admitting non-trivial L-space surgeries. 

Our arguments rely on both the Alexander and \(\delta\)-grading structure of the knot Floer homology of knots admitting L-space surgeries.   Hence, we ask the following:
\begin{questions}
Is there a knot \(K\) admitting an essential Conway sphere such that for all Alexander gradings \(A\), \(\dim \HFKhat(K,A) \leq 1\)?  
In fact, is there any knot that admits no non-trivial L-space surgery and whose knot Floer homology satisfies this constraint?
\end{questions}

\begin{figure}[bt]
	\centering
	\(
	\tanglepairingI
	\quad = \quad
	\tanglepairingII
	\)
	\caption{Two diagrams defining the same tangle decomposition of the link \(T_1\cup T_2\). The tangle \protect\reflectbox{\(T_2\)} is the result of rotating \(T_2\) around the vertical axis. By rotating the entire link on the right-hand side around the vertical axis, we can see that \(T_1\cup T_2=T_2\cup T_1\).}
	\label{fig:tanglepairing} 
\end{figure}

The characterization of knots admitting cyclic, or more generally, elliptic or exceptional surgeries has been a problem of lasting interest and difficulty in three-manifold topology. Our Main Theorem recovers a result that is proved implicitly in the work of Wu \cite[p.~173]{Wu:DehnSurgeryArborescent}. Wu proved that if a knot $K$ is the union of two non-split tangles, then either $K$ is a $(2, q)$-cable of a composite knot, or the exterior of $K$ contains an essential lamination which remains essential after all non-trivial surgeries. In the former case, there is a torus which remains incompressible after every non-trivial surgery, which makes the fundamental group of surgery infinite. In the latter, the fundamental group is infinite by results of Gabai and Oertel \cite{GO}.  

Since all elliptic three-manifolds are L-spaces, the Main Theorem immediately gives a new proof of Wu's result:
\begin{corollary}\label{infinite}
Let \(K\) be a knot in \(S^3\) with an essential Conway sphere.  Then \(\pi_1(S^3_{p/q}(K))\) is infinite for all \(p/q \in \mathbb{Q}\).  \qed
\end{corollary}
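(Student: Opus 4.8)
The strategy is purely formal: the corollary follows from the Main Theorem together with a single standard input from the geometrization of three-manifolds, so I would argue by contradiction. Suppose that $\pi_1(S^3_{p/q}(K))$ is finite for some $p/q\in\mathbb{Q}$.

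Before anything else, I would note that a knot with an essential Conway sphere is necessarily knotted: the essential Conway sphere provides an incompressible four-punctured sphere properly embedded in the knot exterior, and a surface of Euler characteristic $-2$ cannot be incompressible in a solid torus, so $K$ is not the unknot. In particular, every $p/q\in\mathbb{Q}$ specifies a \emph{non-trivial} Dehn surgery on $K$ — the trivial filling slope $\infty$ is not a rational number and is therefore excluded — which is precisely the situation governed by the Main Theorem.

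Next I would invoke the fact, recorded in the introduction, that every closed oriented three-manifold with finite fundamental group is an L-space \cite[Proposition 2.3]{HFKlens}; this is where elliptization enters, since such a manifold is a spherical space form, spherical space forms are elliptic, and elliptic three-manifolds are L-spaces. Under our assumption, then, $S^3_{p/q}(K)$ is an L-space, so $K$ admits a non-trivial L-space surgery. This contradicts the Main Theorem, which asserts that a knot with a non-trivial L-space surgery has no essential Conway sphere. We conclude that $\pi_1(S^3_{p/q}(K))$ is infinite for every $p/q\in\mathbb{Q}$.

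There is essentially no obstacle to overcome once the Main Theorem is available: the corollary is a one-line deduction. The only step requiring any care is the implication ``finite fundamental group $\Rightarrow$ L-space,'' which tacitly relies on Perelman's elliptization theorem; everything else is bookkeeping about which surgery slopes count as non-trivial, together with the routine observation that the hypothesis already rules out the unknot.
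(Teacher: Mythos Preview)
Your proof is correct and follows the same route as the paper, which simply records before the corollary that ``since all elliptic three-manifolds are L-spaces, the Main Theorem immediately gives a new proof of Wu's result'' and marks the statement with a \qed. Your additional care about excluding the unknot is harmless but unnecessary: the Main Theorem applies to any knot with a non-trivial L-space surgery, and the condition \(p/q\in\mathbb{Q}\) already guarantees the filling slope is not \(\infty\), so the surgery is non-trivial regardless of whether \(K\) is knotted.
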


Additionally we have: 

\begin{corollary}\label{cor:BDC}
Let \(K\) be a hyperbolic knot with a non-trivial L-space surgery.  The double cover of \(S^3\) branched over \(K\) is either hyperbolic or, up to orientation-reversal, the Seifert fibered space \(S^2(-1; \frac{1}{2}, \frac{1}{3}, \frac{1}{2n+1})\) for \(n \geq 3\).  
\end{corollary}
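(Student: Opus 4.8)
The plan is to deduce this from the Main Theorem together with the standard correspondence between the equivariant topology of the branched double cover $\Sigma_2(K)$ and the decomposition of the pair $(S^3,K)$ along spheres, tori, and Conway spheres.

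Let $\iota$ denote the covering involution of $\Sigma_2(K)$; it is orientation-preserving with fixed-point set the preimage $\widetilde K$ of $K$. By the equivariant sphere and torus theorems (see the work of Meeks--Scott and, in this setting, of Bonahon--Siebenmann), if $\Sigma_2(K)$ is reducible or contains an essential torus then it contains an $\iota$-equivariant one. An equivariant essential sphere would project, since $K$ is prime (being hyperbolic), to an inessential sphere in $(S^3,K)$ --- a contradiction --- so $\Sigma_2(K)$ is irreducible. An equivariant essential torus $T$ disjoint from $\widetilde K$ projects to an essential torus in the exterior of $K$; and if $T$ meets $\widetilde K$, then $\iota|_T$ is an orientation-preserving involution of $T$ with isolated fixed points, so by the Riemann--Hurwitz formula it has exactly four of them and $T$ projects to an essential Conway sphere for $K$. (That $\iota|_T$ is orientation-preserving, and that $T$ cannot descend to an embedded Klein bottle, follow from $\iota$ preserving the orientation of $\Sigma_2(K)$ and from the fact that $S^3$ contains no closed embedded non-orientable surface.) Since $K$ is hyperbolic its exterior is atoroidal, and by the Main Theorem $(S^3,K)$ has no essential Conway sphere; hence $\Sigma_2(K)$ is irreducible and atoroidal. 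By geometrization it is therefore hyperbolic or Seifert fibered, and being an atoroidal rational homology sphere, any Seifert fibration has base orbifold $S^2$ with at most three exceptional fibers.

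Next I would treat the case that $\Sigma_2(K)$ is not hyperbolic. If it is a lens space then $K$ is a two-bridge knot; otherwise it is a small Seifert fibered space over $S^2$ with exactly three exceptional fibers, and $K$ is a Montesinos knot with three rational tangles. (Both follow from the work of Montesinos and of Hodgson--Rubinstein, using the uniqueness up to conjugacy of the involution exhibiting such a manifold as a branched double cover of a knot.) Either way $K$ is a Montesinos knot, so by Baker--Moore's classification of Montesinos knots admitting non-trivial L-space surgeries, $K$ is --- up to mirroring --- one of $T(2,2k+1)$, $T(3,4)$, $T(3,5)$, or a pretzel knot $P(-2,3,2n+1)$ with $n\ge 3$. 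Of these, only the pretzel knots are hyperbolic, and for them the classical identification of the branched double cover of a pretzel knot with a Seifert fibered space gives $\Sigma_2\big(P(-2,3,2n+1)\big) = S^2(-1;\tfrac12,\tfrac13,\tfrac1{2n+1})$; passing to the mirror reverses the orientation. This completes the argument.

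The hard part will be making the second paragraph precise: one must verify that an incompressible equivariant torus in $\Sigma_2(K)$ descends to an \emph{incompressible} torus or four-punctured sphere downstairs, and that an essential one descends to an essential one, rather than to something compressible or boundary-parallel. The key point is that a compressing disk (or a disk realizing boundary-parallelism) for the quotient surface lies in the exterior of $K$, so its boundary is null-homotopic there and hence in the kernel of the $\Z/2$-valued holonomy of the covering; it therefore lifts to two disjoint curves, each bounding a lift of the disk, contradicting incompressibility (respectively essentiality) of $T$ upstairs. The remaining ingredients --- the equivariant decomposition theorems, the results of Montesinos and of Hodgson--Rubinstein, the Baker--Moore classification, and the pretzel-knot computation --- are quoted from the literature.
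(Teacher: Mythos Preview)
Your argument is correct and follows essentially the same route as the paper's proof. The paper is terser: it cites \cite[Section 2]{Paoluzzi} for the implication ``$K$ hyperbolic, no essential Conway sphere, $\Sigma(K)$ not hyperbolic $\Rightarrow$ $\Sigma(K)$ is Seifert fibered and $K$ is Montesinos,'' and then invokes Baker--Moore exactly as you do. You unpack that citation by hand via the equivariant torus theorem and geometrization, which is a fine and instructive alternative.

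One remark on attributions: Hodgson--Rubinstein indeed gives the lens space case, but Montesinos only \emph{constructs} the double branched covers of Montesinos links---he does not prove that every involution on a small Seifert fibered space with quotient $S^3$ is conjugate to the standard one. For that step you need either Meeks--Scott's theorem that finite group actions on (most) Seifert fibered spaces are fiber-preserving, or the orbifold theorem applied to the quotient orbifold $(S^3,K)$; this is presumably what Paoluzzi's argument uses. The mathematics is right, but the citation should be adjusted.
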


\begin{proof}
Let $\Sigma(K)$ denote the branched double cover of $K$. If $\Sigma(K)$ is not hyperbolic, then because $K$ has no essential Conway sphere, $\Sigma(K)$ is Seifert fibered, and hence $K$ is Montesinos \cite[Section 2]{Paoluzzi}. 
The hyperbolic Montesinos knots with L-space surgeries are, up to mirroring, the pretzel knots $P(-2,3,2n+1)$ with $n \geq 3$ \cite{BM}. 
The branched double covers of these knots are the Seifert fibered spaces described in the corollary.  
\end{proof}

Finally, we mention an open problem. A knot \(K\) has an essential $n$-string tangle decomposition if there is an embedded sphere that transversally intersects $K$ in $2n$ points and determines an essential surface in the knot exterior. If $K$ has no essential $n$-string tangle decomposition, then $K$ is called $n$-string prime. In this terminology, a prime knot is $1$-string prime. A knot without any essential Conway sphere is $2$-string prime. Baker and the second author conjecture \cite[Conjecture 19]{BM} more generally:

\begin{conjecture}\label{conj:n-string_prime}
	A knot which admits an L-space surgery is \(n\)-string prime for all \(n\geq1\).
\end{conjecture}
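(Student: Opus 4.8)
The plan is to prove Conjecture~\ref{conj:n-string_prime} by induction on \(n\), with the base cases in hand: \(n=1\) is primeness of L-space knots \cite{Krcatovich,HeddenWatson,BaldwinVelaVick}, and \(n=2\) is the Main Theorem of this article. For the inductive step, suppose \(K\) admits an essential \(n\)-string tangle decomposition \(K = T_1 \cup T_2\) along a \(2n\)-punctured sphere \(S\); essentiality means that the complementary \(2n\)-punctured sphere is incompressible in each of the tangle exteriors, so that neither \(T_1\) nor \(T_2\) is \emph{split} in the appropriate sense, where a split \(n\)-string tangle should be taken to be a trivial \(n\)-string tangle (a system of \(n\) boundary-parallel arcs) with knots possibly tied into the strands, arranged so that the complementary surface compresses. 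The goal is to derive a contradiction with the hypothesis that \(K\) has a non-trivial L-space surgery, following the template of the \(n=2\) argument.

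The first and most substantial task is to produce the Floer-theoretic input at the level of \(2n\)-ended tangles. One needs: (a) an invariant \(\HFT(T)\) of \(2n\)-ended tangles refining the peculiar module of \cite{PQMod}, together with a pairing theorem computing \(\HFL\) or \(\HFKhat\) of a tangle sum \(T_1\cup T_2\) from \(\HFT(T_1)\) and \(\HFT(T_2)\); (b) a geography/structure theorem classifying the possible components of \(\HFT(T)\), generalizing \cite{PQSym}, in particular isolating the ``trivial'' components coming from the split part of a tangle; and (c) a detection theorem generalizing Theorem~\ref{thm:detection:split:Intro}, to the effect that \(\HFT(T)\) detects split \(n\)-string tangles. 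With these in place, non-splitness of \(T_1\) and \(T_2\) translates into structural richness of \(\HFT(T_1)\) and \(\HFT(T_2)\): each carries a component that is not of the expected trivial type, either because it records nontrivial Alexander-graded data or because it is genuinely ``special''.

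Next one would generalize Lemma~\ref{lem:lspacepinch}: replacing \(T_i\) by a trivial \(n\)-string tangle \(Q_{s_i}\) along \(S\) — an \(n\)-string rational tangle replacement, realized by a sequence of band surgeries inside the ball bounded by \(S\) — must again yield a knot admitting an L-space surgery, since the resulting manifold is obtained from \(S^3_{p/q}(K)\) by an L-space-compatible Dehn filling along the corresponding link component. Then, exactly as in the present paper, one runs a case analysis over the component types of \(\HFT(T_1)\) and \(\HFT(T_2)\): in each case the pairing theorem, fed with the richness coming from non-splitness, forces \(\HFKhat(K)\) — or \(\HFKhat\) of one of the auxiliary knots \(T_1\cup Q_{s_2}\) or \(Q_{s_1}\cup T_2\) — either to have two generators in some Alexander grading or to have generators in an impossible \(\delta\)-grading configuration, contradicting Ozsváth and Szabó's structure theorem for the knot Floer homology of L-space knots.

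The main obstacle is steps (b) and (c). The immersed-multicurve model of \(\HFT\) is special to four-ended tangles, where the relevant object is a curve in the once-punctured torus — the double branched cover of the four-punctured sphere; for \(2n>4\) that double cover has genus \(n-1\), the ``curves'' become Lagrangians in a higher-dimensional symplectic manifold, and no analogue of the structure theorem \cite{PQSym} is presently available, nor is it clear what the sharp notion of a split \(n\)-string tangle should be. One possible route around this is to avoid a full higher structure theorem and instead argue directly from the \(\delta\)-thinness imposed by the L-space condition, attempting an inductive \emph{sub-sphere} reduction: a four-punctured sub-sphere of \(S\) exhibits \(K\) as a four-ended tangle sum, and one might hope that essentiality of \(S\) propagates to essentiality of some such sub-sphere, reducing to the \(n=2\) case together with primeness. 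A second, more topological reformulation worth pursuing is that an essential \(n\)-string tangle decomposition lifts to an essential closed genus-\((n-1)\) surface in the branched double cover \(\Sigma(K)\), so the conjecture would follow from suitable ``smallness'' of \(\Sigma(K)\) for L-space knots with L-space surgeries; making either reduction precise, or else building the higher-ended peculiar-module theory far enough to run the case analysis, is where the real difficulty lies.
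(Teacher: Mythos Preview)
The statement you are addressing is Conjecture~\ref{conj:n-string_prime}, which the paper explicitly presents as an \emph{open problem}; the paper contains no proof of it, only the cases \(n=1\) (cited) and \(n=2\) (the Main Theorem), together with Corollary~\ref{cor:satellites} for satellite knots with \(n\leq 5\). So there is no paper proof to compare against, and what you have written is not a proof but a research outline --- as you yourself acknowledge when you identify the main obstacles.

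That said, there are concrete problems with the outline beyond the obstacles you already flag. First, your proposed generalization of Lemma~\ref{lem:lspacepinch} is not justified: in the paper, that lemma is \emph{not} a Dehn-surgery statement but a purely Floer-theoretic consequence of the pairing theorem --- a rational component of \(\HFT(T_i)\) makes \(\HFKhat(Q_{s_i}\cup T_j)\) a bigraded summand of \(\HFKhat(K)\), and Lemma~\ref{lem:lspacestructure} then forces the smaller knot to be L-space. Your claimed justification via ``L-space-compatible Dehn filling along the corresponding link component'' does not correspond to anything in the argument and is not obviously true: an \(n\)-string rational tangle replacement is not a single Dehn filling on the surgered manifold, and there is no reason a sequence of band surgeries should preserve the L-space property. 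Without a higher pairing theorem giving summand inclusions, you have no mechanism to transfer the L-space constraint to the auxiliary knots.

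Second, the ``sub-sphere reduction'' route you propose at the end --- finding a four-punctured sub-sphere of \(S\) that is essential --- does not work in general: a \(2n\)-punctured essential sphere need not contain any essential four-punctured sub-sphere (think of a tangle whose complement is hyperbolic). Likewise, the branched-double-cover reformulation reduces the conjecture to a statement about incompressible surfaces in \(\Sigma(K)\) for L-space knots that is at least as hard. In short, the gaps you name in steps~(b) and~(c) are genuine and currently decisive; the proposal is a reasonable sketch of what a proof \emph{might} require, but it is not a proof, and the paper makes no claim otherwise.
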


Baker and Motegi show that this conjecture is true for satellite knots with \(n\leq3\) \cite[Theorem~7.7]{BakerMotegi}. (The conjecture in their assumptions is the L-space gluing theorem, which was proved by Hanselman, Rasmussen and Watson~\cite{HRW}.) Motegi has pointed out the following application of our main theorem: 

\begin{corollary}\label{cor:satellites}
	If \(n\leq5\), Conjecture~\ref{conj:n-string_prime} holds for satellite knots. 
\end{corollary}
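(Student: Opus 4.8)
The plan is to combine the Main Theorem with the argument of Baker and Motegi and the L-space gluing theorem of Hanselman, Rasmussen and Watson \cite{HRW}. Let $K=P(C)\subset S^3$ be a satellite knot with companion torus $T=\partial V$, where $V\cong S^1\times D^2$ is the companion solid torus and $P\subset V$ is the pattern, and suppose $K$ admits a non-trivial L-space surgery; we must rule out an essential $n$-string tangle decomposition of $K$ for every $n\le 5$. The range $n\le 3$ is \cite[Theorem~7.7]{BakerMotegi}, whose hypothesis is now supplied by \cite{HRW}, so it suffices to treat $n\in\{4,5\}$. Suppose for contradiction that $S\subset S^3$ is a sphere meeting $K$ transversely in $2n$ points and cutting out an incompressible $(2n)$-punctured sphere.

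The idea is to play $S$ off against the companion torus $T$. First I would isotope $S$ so that $S\cap T$ is transverse with a minimal number of components; innermost-disk arguments using the incompressibility of $S$ and of $T$ then let us assume that either $S\cap T=\emptyset$, or every component of $S\cap T$ is essential on both surfaces, hence a collection of mutually parallel copies of a single slope $\gamma$ on $T$. Cutting $S$ along $S\cap T$ decomposes it into planar pieces lying alternately in $V\smallsetminus N(P)$ and in the companion exterior $E(C)=S^3\smallsetminus\operatorname{int} V$, and since $K\subset V$ all $2n$ intersection points with $K$ lie on the pieces inside $V$. If $S\cap T=\emptyset$ then $S$ lies inside $V$ (it cannot be disjoint from $K$) and yields an essential tangle decomposition of the pattern; an L-space satellite pattern is sufficiently constrained (cf. \cite{HRW, BakerMotegi}) that this is incompatible with $n\le 5$. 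If $S\cap T\neq\emptyset$, then minimality of $|S\cap T|$ rules out boundary-parallel pieces, and an essential piece lying in $E(C)$ caps off to an essential $k$-string tangle decomposition of the companion knot $C$, with $k$ controlled by $n$ and the count $|S\cap T|$. Now $C$ again carries a non-trivial L-space surgery by \cite{HRW}, so such a decomposition of $C$ is impossible for $k=1$ by primeness of L-space knots \cite{Krcatovich, HeddenWatson, BaldwinVelaVick} and for $k=2$ by the Main Theorem; the arithmetic of the string counts is precisely what upgrades the range $n\le 3$ to $n\le 5$.

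I expect the bookkeeping in the second case to be the crux, and the reason Baker and Motegi stopped at $n\le 3$: one must show that after minimizing $|S\cap T|$ some piece produces a tangle decomposition of $C$ (or of the pattern) of string number at most $2$ rather than merely at most $n$, and that this decomposition is \emph{essential} and not merely incompressible and $\partial$-incompressible, so that the Main Theorem can be brought to bear. Controlling $|S\cap T|$, the distribution of the $2n$ marked points among the pieces inside $V$, and the resulting boundary slope $\gamma$ on $\partial N(C)$---together with the positivity of the winding and wrapping numbers of an L-space satellite pattern---is what I expect to make the numerics close exactly for $n\in\{4,5\}$, and I would model this analysis on the low-$n$ case treated in \cite{BakerMotegi}.
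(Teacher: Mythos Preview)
Your strategy is the right one and matches the paper's, but what you flag at the end as ``bookkeeping'' is in fact the entire content of the argument, and your proposal does not carry it out. The paper's proof is short precisely because it outsources this step to two cited results that you do not invoke:
\begin{itemize}
  \item \cite[Theorem~7.4]{BakerMotegi}: the pattern of an L-space satellite knot is braided, with braid index $\omega\geq 2$;
  \item \cite[Theorem~1.4]{Hayashi1999}: if a satellite with braided pattern of index $\omega$ admits an essential $n$-string tangle decomposition, then the companion admits an essential $n/\omega$-string tangle decomposition.
\end{itemize}
Combined with the fact that the companion is again an L-space knot (\cite[Proposition~3.3]{HomSatellites}, \cite[Theorem~7.3]{BakerMotegi}), and that L-space knots are $1$-string prime and (by the Main Theorem) $2$-string prime, one gets $n/\omega\geq 3$, hence $n\geq 6$. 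This handles all $n\leq 5$ uniformly; there is no need to separate $n\leq 3$ from $n\in\{4,5\}$.

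Your plan of isotoping $S$ to minimize $|S\cap T|$ and studying the planar pieces is essentially the beginning of a proof of Hayashi's theorem, but you never extract the key numerical control: the braidedness of the pattern is what forces each piece of $S$ inside $V$ to meet $K$ in a multiple of $\omega$ points, which is exactly where the factor $1/\omega$ comes from. Without this, there is no reason for a piece in the companion exterior to cap off to a decomposition of $C$ with at most two strings. Your treatment of the case $S\cap T=\emptyset$ is also a genuine gap: saying the pattern is ``sufficiently constrained'' is not an argument, and in fact this case is absorbed into Hayashi's theorem rather than handled separately.
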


\begin{proof}
	Suppose a satellite knot $P(K)$ with pattern \(P\) and companion \(K\) admits an essential \(n\)-string tangle decomposition along some 2-sphere \(S\).
	Then \(S\) intersects \(P(K)\) transversely in \(2n\) points.
	By \cite[Theorem~7.4]{BakerMotegi}, the pattern \(P\) is braided with braid index $\omega \geq 2$. By~\cite[Theorem~1.4]{Hayashi1999}, \(S\) gives rise to an essential \(n/\omega\)-string tangle decomposition of \(K\).
	By~\cite[Proposition~3.3]{HomSatellites}, see also \cite[Theorem~7.3]{BakerMotegi}, the companion knot of a satellite L-space knot is also an L-space knot.
	Since L-space knots are prime and also 2-string prime by the main result, $n/\omega \geq 3$.
	This means that $S$ intersects $P(K)$ at least
	$2 n \geq 12$ times.
\end{proof}

\subsection*{Outline} 
In Section~\ref{sec:review:lspace-knots}, we prove some technical lemmas about the knot Floer complexes of knots with L-space surgeries.  
In Section~\ref{sec:review:HFT}, we review the immersed curve theory for tangles.  
Section~\ref{sec:detection:split} is devoted to the proof of Theorem~\ref{thm:detection:split:Intro}. 
In Section~\ref{sec:pairings} we perform some calculations needed for the proof of the Main Theorem in Section~\ref{sec:proof-main-thm}.  

\subsection*{Acknowledgements} TL was partially supported by NSF grant DMS-1709702 and a Sloan fellowship. TL and CZ thank the hospitality of Virginia Commonwealth University where much of this research was performed. The authors would also like to thank Kimihiko Motegi for explaining Corollary~\ref{cor:satellites} to us, and Cameron Gordon, Jake Rasmussen, and Liam Watson for comments on an earlier draft of this paper. 
Finally, we would like to thank the anonymous referees for their careful review of this paper. 
We are particularly grateful to the referee who spotted a major mistake in the computation of what was previously Figure~10 and the proof of Proposition~5.1. 
Our subsequent correction simplified the overall argument.
\section{Knot Floer homology and properties of L-space knots}\label{sec:review:lspace-knots}
In the present section we will review the necessary background of knot Floer homology, and derive some structural properties of knots admitting a non-trivial surgery to an L-space. We assume the reader is familiar with knot Floer homology in some capacity; our main reference is \cite{HFK}, and an excellent survey can be found in \cite{Manolescu:intro}. All Floer homologies will be computed with coefficients in \(\F = \Z/2\Z\). 

With a knot \(K\subset S^3\) is associated a doubly-filtered chain complex \(\CFK^\infty(K)\) over the polynomial ring \(\F[U,U^{-1}]\), called the full knot Floer complex of \(K\). This complex is freely generated by the intersections of two Lagrangians in a symmetric product of a Riemann surface, constructed with analytical input from a Heegaard diagram. The generators of the complex carry two integer gradings called the \emph{Maslov} grading, $M$, and the \emph{Alexander} grading, $A$.  There is also a third grading, $\delta$, defined as $A - M$.  These are collectively referred to as the bigrading, as any two of these gradings determine the third.  The differential \(\partial^\infty\) of \(\CFK^\infty(K)\) decreases the Maslov grading by one and does not raise the Alexander grading. The action of the variable \(U\) decreases the Maslov grading by two and the Alexander grading by one. 

It is often convenient to visualize generators (over $\F$) of the full knot Floer complex \(C\coloneqq\CFK^\infty(K)\) as points in the \((i, j)\)-plane. Note that this picture does not take into account the Maslov or $\delta$-grading.  Here, \(i\) represents the (negative of the) \(U\)-exponent of a generator, and \(j\) is the Alexander grading.  Up to chain homotopy equivalence, we may assume that the differential $\partial^\infty$ strictly lowers one of the coordinates.    
For example, the complex corresponding to the torus knot \(T(3, 4)\) is pictured in Figure \ref{fig:CFKinfty-good}.  

Let $\partial$ denote the restriction of $\partial^\infty$ to $C\{i = 0\}$.  Then, $(C\{i=0\},\partial)$ is precisely $\CFhat(S^3)$ filtered by $K$.  Hence, the total homology with respect to $\partial$ recovers $\HFhat(S^3)$, and in particular, is one-dimensional.  The associated graded complex corresponding to the Alexander filtration on \(C\{i=0\}\) is \(\HFKhat(K)\) by our choice of model for $\CFK^\infty$.  This can be rephrased as saying there is a differential $\partial$ on \(\HFKhat(K)\) which decreases the Maslov grading by one, strictly decreases the Alexander grading, and has one-dimensional homology supported in Maslov grading zero.  Finally, there is an $(i,j)$-symmetry, in the sense that there is a chain homotopy equivalence from $C\{i = 0\}$ to $C\{j = 0\}$, which sends $C\{(0,a)\}$ to $C\{(a,0)\}$.   

Recall that an L-space is a rational homology sphere \(Y\) for which \(|H_1(Y;\Z)| = \dim \HFhat(Y)\).
Given a knot \(K\subset S^3\), one may ask for which slopes \(\tfrac{p}{q}\in\Q\), the three-manifold \(S^3_{p/q}(K)\) obtained by Dehn surgery of \(K\) along the slope \(\tfrac{p}{q}\) is an L-space. If there exists such a slope, we call \(K\) an L-space knot. 
If this slope can be chosen to be positive, we call \(K\) a positive L-space knot. Similarly, one defines negative L-space knots. (Since \(0\)-surgery along any knot in \(S^3\) yields a three-manifold which is not a rational homology sphere, any L-space knot is either positive or negative. Moreover, the unknot is the only L-space knot in \(S^3\) which is both positive and negative. This last fact follows from Ozsv\'ath and Szabo's Theorem~\ref{thm:lspace-ordered-stronger} below and unknot detection of knot Floer homology~\cite{OS:genus}.) 
There is a relationship between the knot Floer homology of \(K\) and the Heegaard Floer homology of sufficiently large integral surgery \(\HFhat(S^3_p(K))\) \cite{HFK}. From this relationship, Ozsváth and Szabó derived the main structural theorem for the knot Floer homology of an L-space knot \cite[Theorem 1.2]{HFKlens} (see also \cite[Theorem 2.10]{OSSConcordance}): 

\begin{theorem}\label{thm:lspace-ordered-stronger}
Suppose \(K \subset S^3 \) is a positive L-space knot. Then there exists a basis \(\{ x_{-\ell},\dots, x_\ell\}\) for \(\CFK^\infty(K)\) with the following properties: 
	\begin{enumerate}
		\item \(A(x_k) = A_k\), where \(A_{-\ell}<A_{-\ell+1} <\cdots < A_k <\cdots <A_{\ell-1}<A_{\ell}\), 
		\item \(A_k = -A_{-k}\), 
		\item If \(k\equiv \ell+1\) mod 2, then \(\partial^\infty (x_k)=x_{k-1} + U^{A_{k+1} - A_k}x_{k+1}\) and $M(x_{k}) - M(x_{k-1}) = 1$,
		\item\label{lspace:A} If \(k\equiv \ell\) mod 2, then \(\partial^\infty (x_k)=0\) and $M(x_k) - M(x_{k-1}) = 2(A_k - A_{k-1}) - 1$,
		\item \(x_\ell\) is the generator of \(H_*(C\{i=0\}, \partial) \cong \HFhat(S^3)\). 
	\end{enumerate}
\end{theorem}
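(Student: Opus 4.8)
The plan is to reconstruct the chain homotopy type of \(\CFK^\infty(K)\) from the minimal structure that the L-space condition forces on it, using the large surgery formula as the bridge between Dehn surgery and the knot complex. Write \(C=\CFK^\infty(K)\), and for \(s\in\Z\) let \(\hat A_s=C\{\max(i,j-s)=0\}\) be the ``hook'' subquotient complex supported on the column \(\{i=0,\ j\le s\}\) together with the row \(\{j=s,\ i\le 0\}\); note that \(\hat A_s=C\{i=0\}\) once \(s\ge g(K)\) and \(\hat A_s\cong C\{j=0\}\) once \(s\le -g(K)\), so in both ranges \(H_*(\hat A_s)\cong\HFhat(S^3)\cong\F\). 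First I would combine the large surgery formula \cite{HFK}, which gives \(\HFhat(S^3_N(K),[s])\cong H_*(\hat A_s)\) for \(N\) large, with the standard fact that a positive L-space knot has \(S^3_N(K)\) an L-space for all sufficiently large integers \(N\); since \(S^3_N(K)\) is an L-space exactly when \(\dim_\F\HFhat(S^3_N(K))=N\), this rephrases the hypothesis as \(\dim_\F H_*(\hat A_s)=1\) for \emph{every} \(s\in\Z\).

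Next I would turn this rank condition into the staircase shape. Pass to a reduced model of \(C\), so that a basis of \(C\{i=0\}\) over \(\F\) consists of the generators of \(\HFKhat(K)\) and the differential strictly lowers the Alexander grading; let \(\mathcal F_s=C\{i=0,\ j\le s\}\) be the resulting filtration of \(\CFhat(S^3)=C\{i=0\}\) and set \(W_s=H_*(\mathcal F_s)\), so \(W_s=0\) for \(s\ll0\) and \(W_s\cong\F\) for \(s\ge g(K)\). The hook complex sits in two short exact sequences of complexes, \(0\to\mathcal F_{-s-1}\to\hat A_s\to\mathcal F_s\to 0\) — where identifying the subcomplex \(C\{i<0,\ j=s\}\) of \(\hat A_s\) with \(\mathcal F_{-s-1}\) uses the \((i,j)\)-symmetry to exchange the two filtrations — and \(0\to\mathcal F_{s-1}\to\mathcal F_s\to\HFKhat(K,s)\to 0\). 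The plan is to feed \(\dim_\F H_*(\hat A_s)=1\) and the boundary behaviour of \(W_s\) into the two resulting long exact sequences and argue, grading by grading, that \(\dim_\F\HFKhat(K,s)\le1\), that the Alexander gradings carrying \(\HFKhat(K)\) form a finite strictly increasing sequence, and that the induced vertical and horizontal differentials assemble those generators into a single staircase; the \((i,j)\)-symmetry then yields \(A_k=-A_{-k}\).

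Granting the staircase shape of \(\partial^\infty\), I would then note that the Maslov grading statements in (3) and (4) are purely formal consequences of the grading conventions. Indeed \(M(x_\ell)=0\) because \(x_\ell\) generates \(H_*(C\{i=0\})\cong\HFhat(S^3)\), which is concentrated in Maslov grading \(0\); and since \(\partial^\infty\) lowers the Maslov grading by one while \(U\) lowers it by two, reading the relation \(\partial^\infty(x_k)=x_{k-1}+U^{A_{k+1}-A_k}x_{k+1}\) (for \(k\equiv\ell+1\)) off degree by degree gives both \(M(x_k)-M(x_{k-1})=1\) and \(M(x_{k+1})-M(x_k)=2(A_{k+1}-A_k)-1\), which is property (4) after re-indexing.

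The hard part will be the middle step: upgrading the numerical information from the long exact sequences into an honest basis realising the staircase, that is, showing that \(\dim_\F H_*(\hat A_s)=1\) for all \(s\) forces the chain homotopy type of \(C\) to be exactly a staircase and rules out every other possibility — for instance ``box'' summands, or longer or branching differentials — that is a priori compatible with the homology ranks. This is where the \((i,j)\)-symmetry and the explicit hook shape of \(\hat A_s\), rather than Euler characteristic counts alone, are indispensable.
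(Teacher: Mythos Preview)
The paper does not prove this theorem; it is quoted as a known result of Ozsv\'ath and Szab\'o, with the attribution ``\cite[Theorem~1.2]{HFKlens} (see also \cite[Theorem~2.10]{OSSConcordance})'' and no further argument. So there is nothing in the paper to compare your proposal against: you have written an outline of the original Ozsv\'ath--Szab\'o proof rather than a proof that the paper supplies.

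That said, your outline is the right strategy and matches the standard argument: the large surgery formula reduces the L-space hypothesis to \(\dim_\F H_*(\hat A_s)=1\) for all \(s\), and the task is then to extract the staircase from this. A couple of cautions on the details. First, your short exact sequence \(0\to\mathcal F_{-s-1}\to\hat A_s\to\mathcal F_s\to0\) is not literally correct: the subcomplex \(C\{j=s,\ i<0\}\) of \(\hat A_s\) is only \emph{chain homotopy equivalent} to \(\mathcal F_{-s-1}\) via the \((i,j)\)-symmetry, not isomorphic to it, so you should phrase the resulting homology triangle accordingly. Second, you are honest that the ``hard part'' --- promoting the rank condition to an actual staircase basis and excluding boxes or longer differentials --- is where the work lies, but your proposal does not actually carry this out; the original proof in \cite{HFKlens} does this by an inductive analysis of the maps \(W_s\to W_{s+1}\) and of \(\HFKhat(K,s)\) one grading at a time, and you would need to reproduce something of that sort to have a complete argument. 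Finally, note that your proposal only pins down the \emph{vertical and horizontal} differentials; to conclude \(\partial^\infty(x_k)=x_{k-1}+U^{A_{k+1}-A_k}x_{k+1}\) with no diagonal terms you must also argue (e.g.\ by a filtered change of basis, as in \cite{OSSConcordance}) that any diagonal arrows can be removed.
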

While Theorem~\ref{thm:lspace-ordered-stronger} is technically only proved for knots with a positive \emph{integral} L-space surgery, the set of L-space slopes for a non-trivial positive L-space knot is exactly $[2g(K)-1,\infty)$ \cite[Proposition 9.6]{HFKQ}, so this is not an issue (see also \cite{KMOS}).  For negative L-space knots, a result similar to Theorem~\ref{thm:lspace-ordered-stronger} follows from the Alexander symmetry \cite{HFK}:
\[
\HFKhat_{-M}(mK,-A) \cong \HFKhat_M(K,A).
\]
The constraints of Theorem~\ref{thm:lspace-ordered-stronger} imply that \(\CFK^\infty\) of a knot admitting a non-trivial L-space surgery displays a ``staircase structure'', as is shown in Figure \ref{fig:CFKinfty-good}.  
Let us also observe that for a positive L-space knot, the Alexander grading of the top-most generator, the three-genus, smooth four-ball genus and \(\tau\) invariant agree \cite{OS:genus, HFKlens}, that is,
\[
A(x_\ell) = A_\ell = g(K) = \tau(K) = g_4(K).
\]

\begin{corollary}
\label{cor:lspace-ordered-short}
For any L-space knot \(K\),
\[
\HFKhat(K) \cong \bigoplus_{k = -\ell}^\ell \F_{(M_k, A_k)},
\]
where \(M_k < M_{k+1}\) and \(A_k < A_{k+1}\) for all \(k\).  
\qed 
\end{corollary}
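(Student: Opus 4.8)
The plan is to deduce this corollary directly from Theorem~\ref{thm:lspace-ordered-stronger}, which already furnishes, for a positive L-space knot, a basis $\{x_{-\ell},\dots,x_\ell\}$ for $\CFK^\infty(K)$ whose Alexander gradings satisfy $A_{-\ell}<\cdots<A_\ell$. The point is that the associated graded of $C\{i=0\}$ with respect to the Alexander filtration computes $\HFKhat(K)$, so I need to read off the knot Floer homology from the staircase complex. Concretely, restricting $\partial^\infty$ to $C\{i=0\}$ and then passing to the associated graded kills all the differentials: the only differentials in Theorem~\ref{thm:lspace-ordered-stronger}(3) are of the form $x_k\mapsto x_{k-1}+U^{A_{k+1}-A_k}x_{k+1}$, and in $C\{i=0\}$ the term $U^{A_{k+1}-A_k}x_{k+1}$ lives in filtration level $A_{k+1}$ (not $A_k$), while $x_{k-1}$ lives in filtration level $A_{k-1}<A_k$; hence in the associated graded the induced differential on the generator $x_k$ vanishes. (One should note that the $U$-power shifts the $j$-coordinate of $x_{k+1}$ down to $A_k$, but for the associated graded of $C\{i=0\}$ the relevant filtration grading of that summand is still the Alexander grading $A_{k+1}$ of the generator $x_{k+1}$ in $C\{i=0\}$; I would phrase this carefully, as it is the one genuinely delicate bookkeeping point.)

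Thus each $x_k$ survives to a one-dimensional $\F$ summand of $\HFKhat(K)$, and its bigrading is $(M_k,A_k)$ with $A_k=A(x_k)$ and $M_k=M(x_k)$. This gives
\[
\HFKhat(K)\cong\bigoplus_{k=-\ell}^{\ell}\F_{(M_k,A_k)}
\]
with $A_k<A_{k+1}$ immediate from Theorem~\ref{thm:lspace-ordered-stronger}(1). For the Maslov inequality $M_k<M_{k+1}$, I would argue case by case using parts (3) and (4): when $k\equiv\ell+1\pmod 2$ we have $M(x_k)-M(x_{k-1})=1>0$ directly; when $k\equiv\ell\pmod 2$ we have $M(x_k)-M(x_{k-1})=2(A_k-A_{k-1})-1$, which is positive since $A_k-A_{k-1}\geq 1$ by strict monotonicity of the Alexander gradings (they are integers). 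Either way consecutive Maslov gradings strictly increase, so $M_k<M_{k+1}$ for all $k$.

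Finally I would dispose of the negative case and the trivial case. If $K$ is a negative L-space knot, then its mirror $mK$ is a positive L-space knot, and the Alexander symmetry $\HFKhat_{-M}(mK,-A)\cong\HFKhat_M(K,A)$ quoted in the excerpt transports the statement for $mK$ to one for $K$: the summands $\F_{(M_k,A_k)}$ of $\HFKhat(mK)$ become summands $\F_{(-M_k,-A_k)}$ of $\HFKhat(K)$, and reversing the indexing $k\mapsto -k$ restores both $A_k<A_{k+1}$ and $M_k<M_{k+1}$. The unknot is covered trivially ($\ell=0$, a single generator in bigrading $(0,0)$). The main obstacle, such as it is, is purely expository rather than mathematical: making the associated-graded computation of $\HFKhat$ from the staircase complex clean and unambiguous, in particular being precise about which filtration level each term of $\partial^\infty(x_k)$ occupies inside $C\{i=0\}$ so that the vanishing of the induced differential is manifest.
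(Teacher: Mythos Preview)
Your approach is correct and is exactly what the paper intends: the corollary is marked \qed\ as an immediate consequence of Theorem~\ref{thm:lspace-ordered-stronger}. The bookkeeping point you flagged is actually simpler than your parenthetical suggests: the term $U^{A_{k+1}-A_k}x_{k+1}$ has $i$-coordinate $-(A_{k+1}-A_k)<0$ and hence does not lie in the subquotient $C\{i=0\}$ at all, so the restricted differential $\partial$ on $C\{i=0\}$ sends $x_k$ (for $k\equiv\ell+1\pmod 2$) to $x_{k-1}$ only, and this vanishes in the associated graded because $A_{k-1}<A_k$.
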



\begin{corollary}
\label{cor:gaps}
For any L-space knot \(K\), $|\delta_k - \delta_{k-1}| = A_k - A_{k-1} - 1$.  
More generally, for any $k > k'$,
\[
| \delta_k - \delta_{k'}| \leq A_k - A_{k'} - (k-k').
\]
\end{corollary}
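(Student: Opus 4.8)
The plan is to reduce to the case of a positive L-space knot and then read both the $\delta$-gradings and the Maslov gradings directly off the staircase complex of Theorem~\ref{thm:lspace-ordered-stronger}. For the reduction, I would note that if $K$ is a negative L-space knot then its mirror $mK$ is a positive L-space knot, and the Alexander symmetry $\HFKhat_{-M}(mK,-A)\cong\HFKhat_M(K,A)$ identifies the generators of $\HFKhat(K)$, listed by increasing Alexander grading as in Corollary~\ref{cor:lspace-ordered-short}, with the generators of $\HFKhat(mK)$ listed in the reverse order, via the substitution $(M,A)\mapsto(-M,-A)$. Since this substitution sends $\delta=A-M$ to $-\delta$ and reversing the index order negates both $\delta_k-\delta_{k-1}$ and $A_k-A_{k-1}$, every quantity appearing in the statement is unchanged, so it suffices to treat positive L-space knots.

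For a positive L-space knot with basis $\{x_{-\ell},\dots,x_\ell\}$ as in Theorem~\ref{thm:lspace-ordered-stronger}, write $A_k=A(x_k)$, $M_k=M(x_k)$, $\delta_k=A_k-M_k$, and fix $k$. Exactly one of $k-1$ and $k$ is congruent to $\ell$ modulo $2$, so precisely one of parts~(3) and~(4) of Theorem~\ref{thm:lspace-ordered-stronger} applies at index $k$ and determines $M_k-M_{k-1}$: if $k\equiv\ell+1\pmod2$ then $M_k-M_{k-1}=1$, and if $k\equiv\ell\pmod2$ then $M_k-M_{k-1}=2(A_k-A_{k-1})-1$. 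Substituting into $\delta_k-\delta_{k-1}=(A_k-A_{k-1})-(M_k-M_{k-1})$ gives, in the two cases respectively,
\[
\delta_k-\delta_{k-1}=(A_k-A_{k-1})-1
\qquad\text{and}\qquad
\delta_k-\delta_{k-1}=-\bigl((A_k-A_{k-1})-1\bigr),
\]
so in all cases $\delta_k-\delta_{k-1}=\pm\bigl(A_k-A_{k-1}-1\bigr)$. Since the $A_k$ are strictly increasing integers, $A_k-A_{k-1}-1\ge0$, and therefore $|\delta_k-\delta_{k-1}|=A_k-A_{k-1}-1$, which is the first assertion.

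Finally, for $k>k'$ the general bound follows by telescoping: the triangle inequality and the identity just proved give
\[
|\delta_k-\delta_{k'}|\le\sum_{j=k'+1}^{k}|\delta_j-\delta_{j-1}|=\sum_{j=k'+1}^{k}\bigl(A_j-A_{j-1}-1\bigr)=(A_k-A_{k'})-(k-k').
\]
I do not expect any genuine difficulty in this argument; the only points needing a moment's care are the reduction from negative to positive L-space knots and the bookkeeping that, for each consecutive pair, exactly one of parts~(3) and~(4) of Theorem~\ref{thm:lspace-ordered-stronger} fixes the Maslov difference, so that $\delta_k-\delta_{k-1}$ is always determined up to sign.
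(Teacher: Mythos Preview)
Your proof is correct and follows essentially the same approach as the paper: the same case split on the parity of $k-\ell$ using Theorem~\ref{thm:lspace-ordered-stronger}(3)--(4) to compute $\delta_k-\delta_{k-1}$, the reduction for negative L-space knots via the Alexander symmetry, and the triangle inequality plus telescoping for the general bound. Your treatment of the mirror reduction is in fact a bit more explicit than the paper's one-line appeal to symmetry.
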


\begin{proof}
Recall that the \(\delta\)-grading is defined by the equation $\delta = A-M$. 
Suppose \(K\) is a positive L-space knot. 
If $k\equiv \ell+1$ mod 2, then by Theorem \ref{thm:lspace-ordered-stronger} (3), $M(x_{k}) - M(x_{k-1}) = 1$, in which case $\delta_k - \delta_{k-1} = A_k- A_{k-1} -1$. Otherwise, if $k\equiv \ell$ mod 2, then by (4), we have $M(x_k) - M(x_{k-1}) = 2(A_k - A_{k-1}) - 1$, and so $\delta_k - \delta_{k-1} = -(A_k- A_{k-1} -1)$. In either case, the first statement holds. 
If \(K\) is a negative L-space knot, the statement follows from the previous case and the Alexander symmetry mentioned above. 
The second statement follows from the first and the triangle inequality. 
\end{proof}

\begin{remark}
	\label{rem:gaps}
	Let us pause for a moment to interpret Corollaries~\ref{cor:lspace-ordered-short} and~\ref{cor:gaps} graphically. Recall that we may visualize the knot Floer complex \(\CFK^\infty(K)\) in terms of points in the \((i, j)\)-plane and that \(\HFKhat(K)\) is equal to the restriction of \(\CFK^\infty(K)\) to \(j=0\) (or equivalently to \(i=0\)). In other words, we can visualize the Alexander grading of \(\HFKhat(K)\) graphically on the real line \(\R\) by placing each generator \(\bullet\in \HFKhat(K)\) at the position \(A(\bullet)\in\Z\subset\R\). For example, we may represent the relative Alexander grading of \(\HFKhat(K)\) of the torus knot \(K=T(3,4)\) as follows:
	\[
	\begin{tikzpicture}[scale=0.5]
		\draw[->] (-4,0) -- (4,0) node[right]{\(A\)};
		\draw (-3,-0.25) -- (-3,0.25) node[above] {\(\bullet\)};
		\draw (-2,-0.25) -- (-2,0.25) node[above] {\(\bullet\)};
		\draw (-1,-0.25) -- (-1,0.25);
		\draw (0,-0.25) -- (0,0.25) node[above] {\(\bullet\)};
		\draw (1,-0.25) -- (1,0.25);
		\draw (2,-0.25) -- (2,0.25) node[above] {\(\bullet\)};
		\draw (3,-0.25) -- (3,0.25) node[above] {\(\bullet\)};
	\end{tikzpicture}
	\]
	If \(K\) is an L-space knot, then by Corollary~\ref{cor:lspace-ordered-short}, there is at most one generator in each Alexander grading, and the first statement of Corollary~\ref{cor:gaps} can be interpreted as saying that the difference in $\delta$-gradings \(|\delta_k-\delta_{k-1}|\) measures the length of the `gap' between the Alexander gradings of \(x_{k-1}\) and \(x_k\):
	\[
	\begin{tikzpicture}[scale=0.5]
	\draw[->] (-3,0) -- (3,0) node[right]{\(A\)};
	\draw (-2,-0.25) -- (-2,0.25) node[above] {\(\bullet\)} ++(0,0.5) node[above] {\(\scriptstyle x_{k-1}\)};
	\draw (-1,-0.25) -- (-1,0.25);
	\draw (0,0.25) node [above] {\(\cdots\)};
	\draw (1,-0.25) -- (1,0.25);
	\draw (2,-0.25) -- (2,0.25) node[above] {\(\bullet\)} ++(0,0.5) node[above] {\(\scriptstyle x_{k}\)};
	\draw [decorate,decoration={brace}] (1.25,-0.4) -- (-1.25,-0.4) node[midway, below] {\(\scriptstyle A_k-A_{k-1}-1\)};
	\end{tikzpicture}
	\]
\end{remark}

\begin{lemma}\label{lem:convex}
Suppose \(K\) is an L-space knot and \(W\subsetneq\HFKhat(K)\) is a subspace which is relatively bigraded isomorphic to the knot Floer homology of an L-space knot.  
Then for any three generators \(x_{k_-}\), \(x_{k_0}\), and \(x_{k_+}\) with \(k_- < k_0 < k_+\), that is
\[
\begin{tikzpicture}[scale=0.5]
\draw[->] (-4,0) -- (4,0) node[right]{\(A\)};
\draw (-3,-0.25) -- (-3,0.25) node[above] {\(\bullet\)} ++(0,0.5) node[above] {\(\scriptstyle x_{k_-}\)};
\draw (-1.5,0.25) node [above] {\(\cdots\)};
\draw (0,-0.25) -- (0,0.25) node[above] {\(\circ\)} ++(0,0.5) node[above] {\(\scriptstyle x_{k_0}\)};
\draw (1.5,0.25) node [above] {\(\cdots\)};
\draw (3,-0.25) -- (3,0.25) node[above] {\(\bullet\)} ++(0,0.5) node[above] {\(\scriptstyle x_{k_+}\)};
\end{tikzpicture}
\]
\(x_{k_-} \in W\) and \(x_{k_+} \in W\) implies \(x_{k_0} \in W\).   
\end{lemma}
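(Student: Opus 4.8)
The plan is to prove that $S\coloneqq\{\,k : x_k\in W\,\}$ is an interval of consecutive integers; the statement of the lemma follows at once, since if $x_{k_-},x_{k_+}\in W$ with $k_-<k_0<k_+$ then $k_0$ lies between the endpoints of $S$, whence $x_{k_0}\in W$.

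First I would observe that, since each bigraded summand of $\HFKhat(K)$ is one-dimensional over $\F$ by Corollary~\ref{cor:lspace-ordered-short}, the bigraded subspace $W$ is the span of $\{x_k : k\in S\}$, and I write the elements of $S$ in increasing order as $s_{-m}<s_{-m+1}<\cdots<s_m$. By hypothesis there is a relatively bigraded isomorphism $\phi\colon\HFKhat(K')\xrightarrow{\sim}W$ for some L-space knot $K'$; using Corollary~\ref{cor:lspace-ordered-short} again, write $\HFKhat(K')=\bigoplus_{j=-m}^{m}\F_{(M'_j,A'_j)}$ with $A'_{-m}<\cdots<A'_m$ and $M'_{-m}<\cdots<M'_m$. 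On both sides the one-dimensional bigraded summands are totally ordered by their Alexander gradings, so $\phi$ is forced to carry $\F_{(M'_j,A'_j)}$ onto $\F_{(M_{s_j},A_{s_j})}$; since $\phi$ preserves relative bigradings, this yields $A'_j-A'_{j-1}=A_{s_j}-A_{s_{j-1}}$ and hence also $\delta'_j-\delta'_{j-1}=\delta_{s_j}-\delta_{s_{j-1}}$ for every $j$, where $\delta'\coloneqq A'-M'$.

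The crux is then to pit two instances of Corollary~\ref{cor:gaps} against one another. The pair $(\phi^{-1}(x_{s_{j-1}}),\phi^{-1}(x_{s_j}))$ consists of consecutive (staircase) generators of the L-space knot $K'$, so the first (sharp) statement of Corollary~\ref{cor:gaps}, transported along $\phi$, gives
\[
\bigl|\delta_{s_j}-\delta_{s_{j-1}}\bigr|
=\bigl|\delta'_j-\delta'_{j-1}\bigr|
=A'_j-A'_{j-1}-1
=A_{s_j}-A_{s_{j-1}}-1 ;
\]
on the other hand, the second statement of Corollary~\ref{cor:gaps} applied to $K$ with the indices $s_{j-1}<s_j$ gives
\[
\bigl|\delta_{s_j}-\delta_{s_{j-1}}\bigr|\le A_{s_j}-A_{s_{j-1}}-(s_j-s_{j-1}).
\]
Comparing the two forces $s_j-s_{j-1}\le 1$, and as $s_{j-1}<s_j$ are integers we get $s_j=s_{j-1}+1$. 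Thus $S=\{s_{-m},s_{-m}+1,\dots,s_m\}$ is an interval, as needed.

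I do not expect a serious obstacle. The one point requiring care is that a relatively bigraded isomorphism automatically respects the Alexander orderings on the two sides, so that the $(j-1)$-st and $j$-th generators of $\HFKhat(K')$ correspond precisely to the \emph{consecutive} pair $s_{j-1}<s_j$ of $S$; this is exactly what lets the sharp gap identity for $K'$ be set against the mere inequality for $K$. One should also note that it is irrelevant whether $K$ or $K'$ is a positive or a negative L-space knot, since the consequences invoked (Corollaries~\ref{cor:lspace-ordered-short} and~\ref{cor:gaps}) already incorporate the Alexander symmetry $\HFKhat_{-M}(mK,-A)\cong\HFKhat_M(K,A)$.
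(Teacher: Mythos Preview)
Your proposal is correct and is essentially the same argument as the paper's: both pit the sharp identity of Corollary~\ref{cor:gaps} for the auxiliary L-space knot (your $K'$, the paper's $J$) against the triangle-inequality bound of Corollary~\ref{cor:gaps} for $K$, applied to a pair of generators that are consecutive in $W$. The paper phrases this as a direct contradiction (assuming $x_{k_0}\notin W$ and reducing to $x_{k_-},x_{k_+}$ adjacent in $W$), whereas you prove the equivalent positive statement that $S$ is an interval; the content is identical.
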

\begin{proof}
Let $J$ be a knot with relatively bigraded knot Floer homology isomorphic to $W$.  Suppose \(x_{k_0} \not\in W\). Without loss of generality, we may assume that there is no $k_- < k_* < k_+$ such that $x_{k_*} \in W$. Then the second part of Corollary~\ref{cor:gaps} applied to \(K\) implies 
\[
|\delta_{k_-} - \delta_{k_+}| \leq A_{k_+} - A_{k-} -2.
\]  
However, by the first part of Corollary~\ref{cor:gaps} applied to $J$, we have 
\[
|\delta_{k_-} - \delta_{k_+}| = A_{k_+} - A_{k_-} - 1,
\]
which is a contradiction.   
\end{proof}

Finally, we recall an observation, attributed to Rasmussen \cite{HeddenWatson}, that for an L-space knot the first two Alexander gradings are consecutive:
\begin{proposition}
\label{top 2}
If \(K\) is an L-space knot, with its knot Floer homology described as in Corollary~\ref{cor:lspace-ordered-short}, then \(A(x_\ell) - A(x_{\ell-1})= A_\ell - A_{\ell-1} = 1\).
\end{proposition}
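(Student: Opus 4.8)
The plan is to recast the proposition as a nonvanishing of $\HFKhat$ in the next-to-top Alexander grading, and then to invoke Rasmussen's observation for that nonvanishing. Write $g=g(K)$. If $g=0$ then $K$ is the unknot (knot Floer homology detects the unknot \cite{OS:genus}) and $\ell=0$, so there is nothing to prove; assume henceforth $g\ge1$, so $\ell\ge1$. By Corollary~\ref{cor:lspace-ordered-short}, $\HFKhat(K)$ has at most one generator in each Alexander grading, and by the remarks following Theorem~\ref{thm:lspace-ordered-stronger} the top generator $x_\ell$ lies in Alexander grading $A_\ell=g$. Hence $A_{\ell-1}$ is exactly the largest Alexander grading strictly below $g$ in which $\HFKhat(K)$ is nonzero, so $A_\ell-A_{\ell-1}=1$ is equivalent to $\HFKhat(K,g-1)\neq 0$. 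Taking graded Euler characteristics, this is the statement that the coefficient of $t^{g-1}$ in the symmetrized Alexander polynomial $\Delta_K$ (the graded Euler characteristic of $\HFKhat(K)$) is nonzero — and for an L-space knot this coefficient is forced into $\{0,\pm1\}$ by Corollary~\ref{cor:lspace-ordered-short} together with the alternating shape of $\Delta_K$ coming from Theorem~\ref{thm:lspace-ordered-stronger}.

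It remains to establish $\HFKhat(K,g-1)\neq0$; this is Rasmussen's observation \cite{HeddenWatson}. For a self-contained treatment the natural route is through fiberedness: $K$ is fibered \cite{Ghiggini, Ni}, so its monodromy $\varphi$ acts on the rank-$2g$ lattice $H_1$ of the fiber surface with $\Delta_K(t)\doteq\det(tI-\varphi_*)=t^{2g}-\operatorname{tr}(\varphi_*)\,t^{2g-1}+\cdots$; after symmetrizing, the coefficient of $t^{g-1}$ is $\pm\operatorname{tr}(\varphi_*)$, so the required nonvanishing is equivalent to $\operatorname{tr}(\varphi_*)\neq0$ for the monodromy of a knot admitting an L-space surgery. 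One is thereby reduced to this trace statement, which uses finer positivity properties of the monodromies of such knots.

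The genuine obstacle is precisely this last step, and it cannot be bypassed with the structure already in hand: Theorem~\ref{thm:lspace-ordered-stronger}, the $(i,j)$-symmetry of $\CFK^\infty$, and the large-surgery formula are all simultaneously consistent with the five-generator ``staircase'' whose graded Euler characteristic is $t^{3}-t+1-t^{-1}+t^{-3}$ (one checks directly that it admits a basis satisfying conditions (1)--(5) and that every associated large-surgery group is one-dimensional), even though — by Rasmussen's observation — no knot realizes it. So any proof of the nonvanishing must genuinely use that $\CFK^\infty(K)$ is the complex of an honest knot, and pinning down the cleanest such input — equivalently, proving the trace statement for the monodromy — is the crux; this is why the proposition is recorded here as an observation of Rasmussen rather than extracted formally from Theorem~\ref{thm:lspace-ordered-stronger}.
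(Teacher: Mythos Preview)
The paper does not supply its own proof of this proposition: it simply records the statement as ``an observation, attributed to Rasmussen \cite{HeddenWatson}'' and moves on. There is therefore no argument in the paper against which to compare your proposal.

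Your proposal is, by your own admission, not a complete proof either. The reductions you carry out are correct: for a non-trivial L-space knot, $A_\ell-A_{\ell-1}=1$ is equivalent to $\HFKhat(K,g-1)\neq 0$, and via fiberedness and $\Delta_K(t)\doteq\det(tI-\varphi_*)$ this is in turn equivalent to $\operatorname{tr}(\varphi_*)\neq 0$ for the monodromy. Your five-generator staircase with graded Euler characteristic $t^3-t+1-t^{-1}+t^{-3}$ is a valid illustration that the formal structure of Theorem~\ref{thm:lspace-ordered-stronger} and the large-surgery formula alone do not force the conclusion. But you then explicitly stop short of proving the trace statement, so the argument has a gap --- exactly the one you identify in your final paragraph.

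In short, you and the paper end up in the same place: both treat Proposition~\ref{top 2} as a black box imported from \cite{HeddenWatson}. If your aim is to match the paper, then citing Rasmussen's observation is already the entire ``proof''; if your aim is a self-contained argument, one still has to be supplied, and your monodromy reformulation, while correct, does not yet provide it.
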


The converse to Corollary~\ref{cor:lspace-ordered-short} also holds:

\begin{lemma}\label{lem:lspacestructure}
Suppose that 
\[
\HFKhat(K) \cong \bigoplus_{k = -\ell}^{\ell} \F_{(M_k, A_k)},
\]
where \(M_k < M_{k+1}\) and \(A_k < A_{k+1}\) for all \(k\).  Then \(K\) is an L-space knot.   
\end{lemma}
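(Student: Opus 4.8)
The plan is to reconstruct \(\CFK^\infty(K)\), up to chain homotopy equivalence, as a staircase complex directly from the hypothesis, and then to read off an L-space surgery from a sufficiently large Dehn surgery via the large surgery formula. Fix the reduced model of \(\CFK^\infty(K)\) in which \(\partial^\infty\) strictly lowers one of the coordinates \(i,j\); as recalled above, the associated graded of \((C\{i=0\},\partial)\) with respect to the Alexander filtration is then \(\HFKhat(K)\) with vanishing differential. In this model \(\CFK^\infty(K)\) is freely generated over \(\F[U,U^{-1}]\) by elements \(g_{-\ell},\dots,g_\ell\) which descend to a basis of \(\HFKhat(K)\), with \(A(g_k)=A_k\) and \(M(g_k)=M_k\), both sequences strictly increasing in \(k\); the Alexander symmetry of \(\HFKhat(K)\) moreover lets us index so that \(A_{-k}=-A_k\) and \(M_{-k}=M_k-2A_k\).

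The next step is to pin down \(\partial^\infty\). The \(U\)-exponent-zero part of \(\partial^\infty\) is exactly the differential on \(C\{i=0\}=\langle g_{-\ell},\dots,g_\ell\rangle\); since it lowers the Maslov grading by exactly one and the \(M_k\) are pairwise distinct, it sends each \(g_k\) to a scalar multiple of the unique generator (if there is one) of Maslov grading \(M_k-1\). Thus \((C\{i=0\},\partial)\) splits as a direct sum of two-step complexes (``arrows'') and generators lying in its kernel; since \(H_*(C\{i=0\},\partial)\cong\HFhat(S^3)\cong\F\) is one-dimensional and concentrated in Maslov grading \(0\), exactly one generator survives — necessarily in Maslov grading \(0\) — and the remaining \(2\ell\) generators are matched into \(\ell\) arrows. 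The same analysis applies to \((C\{j=0\},\partial)\), and the \((i,j)\)-symmetry relates the two pictures. Feeding these arrow patterns into \(\partial^\infty\circ\partial^\infty=0\) together with the \((i,j)\)-symmetry, one shows, after a change of basis, that \(\{g_k\}\) is a staircase basis in the sense of Theorem~\ref{thm:lspace-ordered-stronger} (or its mirror): for \(k\) of one parity \(\partial^\infty g_k=0\), and for \(k\) of the other parity \(\partial^\infty g_k=g_{k-1}+U^{A_{k+1}-A_k}g_{k+1}\). Corollary~\ref{cor:gaps} and Proposition~\ref{top 2} serve as consistency checks on the Maslov gradings at this stage.

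Once \(\CFK^\infty(K)\) is identified, up to mirroring, with a staircase complex, a direct computation with the large surgery formula \cite{HFK} — the same one that exhibits torus knots as L-space knots, and which depends only on \(\CFK^\infty(K)\) — shows that \(\dim\HFhat(S^3_N(K))=|H_1(S^3_N(K);\Z)|\) for all \(N\) of one sign with \(|N|\) sufficiently large. Hence \(S^3_N(K)\) is an L-space and \(K\) is an L-space knot.

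The main obstacle is this reconstruction step: proving that strict monotonicity of the Maslov gradings genuinely forces \(\CFK^\infty(K)\) to be a single staircase, with no extra differential components or summands — no ``box'', no stray two-step summand, no extra diagonal arrow. The key point is that any such feature would, via the Alexander symmetry, produce two basis elements of \(\HFKhat(K)\) sharing a common Maslov grading, contradicting the hypothesis; handling this cleanly, and checking that the vertical and horizontal arrows interlock into one staircase rather than a disjoint union of shorter staircases, amounts to running the argument behind Theorem~\ref{thm:lspace-ordered-stronger} in reverse.
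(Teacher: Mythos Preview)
Your strategy matches the paper's: reconstruct \(\CFK^\infty(K)\) from the vertical differential on \(C\{i=0\}\), use the \((i,j)\)-symmetry to recover the horizontal differential, identify a staircase, and apply the large surgery formula. The arrows in \(C\{i=0\}\) are indeed consecutive (if \(\partial g_k=g_j\neq0\), then \(M_j=M_k-1\), and strict monotonicity forces \(j=k-1\)), and the paper likewise identifies a unique survivor \(g_{k_*}\) with \(A(g_{k_*})=\tau(K)\).

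The gap is in how you dispose of the case \(-\ell<k_*<\ell\). You claim that any non-staircase feature (box, stray summand, diagonal arrow) would force two generators of \(\HFKhat(K)\) to share a Maslov grading. That is not the obstruction, and in fact cannot be: the hypothesis already guarantees the Maslov gradings are distinct, so the reconstructed vertical and horizontal arrows respect this regardless of where \(k_*\) sits. The actual obstruction, which the paper carries out explicitly, is \((\partial^\infty)^2=0\). If \(-\ell<k_*<\ell\), then \(\partial g_\ell=g_{\ell-1}\) in \(C\{i=0\}\), and by the \((i,j)\)-symmetry one finds a horizontal arrow \(g_{\ell-1}\to U^{A_\ell-A_{\ell-1}}g_\ell\). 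One then checks, using the grading constraints on \(\partial^\infty\), that no diagonal term from \(g_\ell\) or \(g_{\ell-1}\) can cancel this: the composite \((\partial^\infty)^2(g_\ell)\) contains \(U^{A_\ell-A_{\ell-1}}g_\ell\neq0\), a contradiction. You gesture at \((\partial^\infty)^2=0\) earlier, but the final paragraph names the wrong mechanism, so the argument as written does not go through.

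A smaller issue: you invoke Corollary~\ref{cor:gaps} and Proposition~\ref{top 2} as ``consistency checks,'' but both are stated for L-space knots, which is exactly what you are trying to establish; they cannot be used before the conclusion is reached.
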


This lemma is perhaps known to some experts, but we include a proof nonetheless because the lemma will be crucial to the arguments which follow. 
Our arguments rely on \(\CFK^\infty(K)\) and Ozsváth and Szabó's large surgery formula. There also exists a more geometric proof which interprets \(\HFKhat(K)\) in terms of Hanselman, Rasmussen, and Watson's immersed curve invariant \(\HFhat(S^3\smallsetminus\mathring{\nu}(K))\) for three-manifolds with torus boundary \cite{HRW}.

\begin{proof}[Proof of Lemma~\ref{lem:lspacestructure}]
For notation, let \(x_k\) denote the single generator with Maslov and Alexander grading \((M_k, A_k)\).  
Recall that the total differential \(\partial\) on \(\HFKhat(K)\) lowers Maslov grading by one, strictly decreases Alexander grading, and has one-dimensional homology.  
Since the strict orders by Alexander gradings and Maslov gradings agree, we see that there exists some index \(-\ell \leq k_*\leq \ell\) such that \(\partial(x_{k_*}) = 0\) and \(x_{k_*}\not\in \im(\partial)\); in other words, \(x_{k_\ast}\) generates the total homology \(\HFhat(S^3)\cong \F\).  
Further, since the ordering of generators is by both Maslov and Alexander gradings, aside from \(k_*\), we may match consecutive indices such that \(\partial(x_k) = x_{k-1}\).  
Note that \(A(x_{k_*})\) is precisely the invariant \(\tau(K)\), by definition \cite{OS:FourBallGenus}.  

We will reconstruct \(\CFK^\infty(K)\) from this data.  As before, let \(C = \CFK^\infty(K)\); the data we have specified computes \(C\{i = 0\}\). Consider now \(C\{j=0\}\) and note that it consists of \(U\)-translates of the \(x_k\).  
In particular, \(C\{j = 0\}\) has a basis given by \(U^{A_k} x_k\), which has coordinates $(-A_k,0)$ in the \((i,j)\)-plane.  
The \((i, j)\)-symmetry of \(\CFK^\infty\) in fact implies that the differential in \(C\{j = 0\}\) sends \(U^{A_{-k}} x_{-k}\) to \(U^{A_{-k+1}} x _{-k+1}\) precisely when \(\partial(x_k) = x_{k-1}\).  
This is illustrated in Figure~\ref{fig:CFKinfty-symmetry}.  Extending by \(U\)-equivariance gives the complete complex \(\CFK^\infty\), up to possible diagonal arrows, that is, possibly excluding terms which both pick up powers of \(U\) {\em and} strictly decrease the Alexander grading.  
We will write \(\partial^\infty_0\) for the component of the total differential without diagonal arrows.  

\begin{figure}[t]
\begin{center}
\begin{tikzpicture}[scale=\tikzscale]
	\draw[step=1, black!30!white, very thin] (-3.4, -3.4) grid (4.4, 4.4);
	
	\filldraw (0.5, 0.5) circle (2pt) node[right] {\(x_0\)};
	\node (A1) at (0.5, 2.5){};
	\node (A2) at (0.5, 3.5){};
	\filldraw (A1) circle (2pt) node [right] {\(x_1\)};
	\filldraw (A2) circle (2pt) node [right] {\(x_2\)};
	\node (B1) at (0.5, -1.5){};
	\node (B2) at (0.5, -2.5){};
	\filldraw (B1) circle (2pt) node [right] {\(x_{-1}\)};
	\filldraw (B2) circle (2pt) node [right] {\(x_{-2}\)};
	\node (C1) at (-2.5, 0.5){};
	\node (C2) at (-1.5, 0.5){};
	\filldraw (C1) circle (2pt) node [above] {\(U^3 x_2\)};
	\filldraw (C2) circle (2pt) node [below] {\(U^2 x_1\)};
	\node (D1) at (2.5, 0.5){};
	\node (D2) at (3.5, 0.5){};
	\filldraw (D1) circle (2pt) node [above] {\(U^{-2} x_{-1}\)};
	\filldraw (D2) circle (2pt) node [below] {\(U^{-3} x_{-2}\)};

	\draw [thick, <-] (A1) -- (A2);
	\draw [thick, ->] (B1) -- (B2);
	\draw [thick, <-] (C1) -- (C2);
	\draw [thick, <-] (D1) -- (D2);
\end{tikzpicture}
\caption{A superposition of the complexes \(C\{i = 0\}\) and \(C\{j= 0\}\) for a purported example illustrating the \((i,j)\)-symmetry when \(k_* = 0\) and \(\ell = 2\).  Note that the Maslov grading of \(x_0\) is 0, while the Maslov grading of $x_1$ determines the Maslov gradings of all generators other than $x_0$ by symmetry and the \(U\)-action.}
\label{fig:CFKinfty-symmetry}
\end{center}
\end{figure}
We consider three cases:  

\noindent {\bf Case 1:} \(\bm{k_* = \ell.}\quad\) 
In this case, we see that \((\CFK^\infty(K), \partial^\infty_0)\) has the ``staircase'' structure from Theorem~\ref{thm:lspace-ordered-stronger}.  By this, we mean that the complex is of the form 
\[
\partial^\infty_0(x_{k})  = \begin{cases} x_{k-1} + U^{A_{k+1} - A_{k}}x_{k+1}  \ & \text{ if } k \equiv \ell + 1\mod{2}, \\
0 & \text{ if } k \equiv \ell \mod{2}.
\end{cases}
\]
See Figure~\ref{fig:CFKinfty-good} for a specific example.

\begin{figure}[t]
	\centering
	\begin{subfigure}{0.45\textwidth}
		\centering
		\begin{tikzpicture}[scale=\tikzscale]
		\draw[step=1, black!30!white, very thin] (-3.4, -6.4) grid (2.4, 5.4);
		
		\filldraw (-2.5, .5) circle (2pt) node[] (A){};
		\filldraw (-2.5, -.5) circle (2pt) node[] (B){};
		\filldraw (-2.5, -2.5) circle (2pt) node[] (C){};
		\filldraw (-2.5, -4.5) circle (2pt) node[] (D){};	
		\filldraw (-2.5, -5.5) circle (2pt) node[] (E){};
		\filldraw (-1.5, 1.5) circle (2pt) node[] (K){};
		\filldraw (-1.5, 0.5) circle (2pt) node[] (L){};
		\filldraw (-1.5, -1.5) circle (2pt) node[] (M){};
		\filldraw (-1.5, -3.5) circle (2pt) node[] (N){};	
		\filldraw (-1.5, -4.5) circle (2pt) node[] (O){};
		\filldraw (-0.5, 2.5) circle (2pt) node[] (F){};
		\filldraw (-0.5, 1.5) circle (2pt) node[] (G){};
		\filldraw (-0.5, -.5) circle (2pt) node[] (H){};
		\filldraw (-0.5, -2.5) circle (2pt) node[] (I){};	
		\filldraw (-0.5, -3.5) circle (2pt) node[] (J){};
		\filldraw (0.5, 3.5) circle (2pt) node[] (P){};
		\filldraw (0.5, 2.5) circle (2pt) node[] (Q){};
		\filldraw (0.5, 0.5) circle (2pt) node[] (R){};
		\filldraw (0.5, -1.5) circle (2pt) node[] (S){};	
		\filldraw (0.5, -2.5) circle (2pt) node[] (T){};
		\filldraw (1.5, 4.5) circle (2pt) node[] (U){};
		\filldraw (1.5, 3.5) circle (2pt) node[] (V){};
		\filldraw (1.5, 1.5) circle (2pt) node[] (W){};
		\filldraw (1.5, -.5) circle (2pt) node[] (X){};	
		\filldraw (1.5, -1.5) circle (2pt) node[] (Y){};
	
		\draw [thick, ->] (B) -- (C);
		\draw [thick, ->] (D) -- (E);
		\draw [thick, ->] (G) -- (H);
		\draw [thick, ->] (I) -- (J);
		\draw [thick, ->] (L) -- (M);
		\draw [thick, ->] (N) -- (O);
		\draw [thick, ->] (Q) -- (R);
		\draw [thick, ->] (S) -- (T);
		\draw [thick, ->] (V) -- (W);
		\draw [thick, ->] (X) -- (Y);
	
		\draw [thick, ->] (G) -- (K);
		\draw [thick, ->] (V) -- (P);
		\draw [thick, ->] (Q) -- (F);
		\draw [thick, ->] (L) -- (A);
	
		\draw [thick, ->] (X) -- (H);
		\draw [thick, ->] (S) -- (M);
		\draw [thick, ->] (I) -- (C);
		\end{tikzpicture}
		\caption{}
		\label{fig:CFKinfty-good}
	\end{subfigure}
	\begin{subfigure}{0.45\textwidth}
		\centering
		\begin{tikzpicture}[scale=\tikzscale]
		\draw[step=1, black!30!white, very thin] (-3.4, -6.4) grid (2.4, 5.4);
		
		\filldraw (-2.5, .5) circle (2pt) node[] (A){};
		\filldraw (-2.5, -.5) circle (2pt) node[] (B){};
		\filldraw (-2.5, -2.5) circle (2pt) node[] (C){};
		\filldraw (-2.5, -4.5) circle (2pt) node[] (D){};	
		\filldraw (-2.5, -5.5) circle (2pt) node[] (E){};
		\filldraw (-1.5, 1.5) circle (2pt) node[] (K){};
		\filldraw (-1.5, 0.5) circle (2pt) node[] (L){};
		\filldraw (-1.5, -1.5) circle (2pt) node[] (M){};
		\filldraw (-1.5, -3.5) circle (2pt) node[] (N){};	
		\filldraw (-1.5, -4.5) circle (2pt) node[] (O){};
		\filldraw (-0.5, 2.5) circle (2pt) node[] (F){};
		\filldraw (-0.5, 1.5) circle (2pt) node[] (G){};
		\filldraw (-0.5, -.5) circle (2pt) node[] (H){};
		\filldraw (-0.5, -2.5) circle (2pt) node[] (I){};	
		\filldraw (-0.5, -3.5) circle (2pt) node[] (J){};
		\filldraw (0.5, 3.5) circle (2pt) node[] (P){};
		\filldraw (0.5, 2.5) circle (2pt) node[] (Q){};
		\filldraw (0.5, 0.5) circle (2pt) node[] (R){};
		\filldraw (0.5, -1.5) circle (2pt) node[] (S){};	
		\filldraw (0.5, -2.5) circle (2pt) node[] (T){};
		\filldraw (1.5, 4.5) circle (2pt) node[] (U){};
		\filldraw (1.5, 3.5) circle (2pt) node[] (V){};
		\filldraw (1.5, 1.5) circle (2pt) node[] (W){};
		\filldraw (1.5, -.5) circle (2pt) node[] (X){};	
		\filldraw (1.5, -1.5) circle (2pt) node[] (Y){};
		
		\draw [thick, ->] (A) -- (B);
		\draw [thick, ->] (D) -- (E);
		\draw [thick, ->] (F) -- (G);
		\draw [thick, ->] (I) -- (J);
		\draw [thick, ->] (K) -- (L);
		\draw [thick, ->] (N) -- (O);
		\draw [thick, ->] (P) -- (Q);
		\draw [thick, ->] (S) -- (T);
		\draw [thick, ->] (U) -- (V);
		\draw [thick, ->] (X) -- (Y);
		\draw [thick, ->] (G) -- (K);
		\draw [thick, ->] (V) -- (P);
		\draw [thick, ->] (Q) -- (F);
		\draw [thick, ->] (L) -- (A);
		\draw [thick, ->] (Y) -- (S);
		\draw [thick, ->] (T) -- (I);
		\draw [thick, ->] (J) -- (N);
		\draw [thick, ->] (O) -- (D);
		\end{tikzpicture}
		\caption{}
		\label{fig:CFKinfty-bad}
	\end{subfigure}
	\caption{Two examples of vertical and horizontal differentials in \(\CFK^\infty(K)\). The example (a) illustrates the case \(k_* = \ell\) in the proof of Lemma~\ref{lem:lspacestructure} for \(\ell = 2\). This is in fact \(\CFK^\infty(T_{3,4})\). The example (b) illustrates the case \(k_* = 0\) when \(\ell = 2\). As we will see, this complex cannot be realized by any knot.}
\end{figure}

It follows from the large surgery formula of \cite{HFK} that \(K\) is a positive L-space knot if 
\[
H_*(C\{\max\{i,j - s\} = 0\})
\]
is one-dimensional for every \(s\).  It is easy to see that a knot Floer complex with a staircase structure satisfies this condition.  Further, the addition of diagonal arrows in \(\partial^\infty\) does not affect the complex \(C\{\max\{i,j-s\} = 0\}\), and so we see that \(K\) is an L-space knot, regardless of any diagonal arrows.  Indeed, any diagonal arrow decreases both \(i\) and \(j\) by at least 1, and hence its image is outside of the subquotient complex \(C\{\max\{i,j-s\} = 0\}\).

\bigskip
\noindent {\bf Case 2:} \(\bm{k_* = -\ell.}\quad\)
Since \(\HFKhat_M(mK,A) \cong \HFKhat_{-M}(K,-A)\), the mirror of \(K\) also satisfies the hypotheses of the lemma.  Further, the mirror has \(k_* = \ell\), 
since \(\tau\) changes sign under mirroring.  Applying the previous case, we see that the mirror of \(K\) is a positive L-space knot, so \(K\) is a negative L-space knot.

\bigskip
\noindent {\bf Case 3:} \(\bm{-\ell < k_* < \ell.}\quad\)
We will show by contradiction this case does not happen.  
Note that in fact we must have \(-\ell+1 < k_* < \ell-1\), for if \(k_* = \ell-1\), then \(\partial (x_\ell) = \partial(x_{\ell-1}) = 0\), which implies the total homology of $C\{i = 0\}$ is at least two-dimensional, which is a contradiction.  
A similar argument applies when \(k_* = -\ell+1\) by mirroring.  
Therefore, \(\partial(x_\ell) = x_{\ell-1}\) and by $U$-equivariance and the \((i,j)\)-symmetry, we observe that in \(C\{j = 0\}\), the differential sends \(U^{A_{\ell-1}}x_{\ell-1}\) to \(U^{A_\ell}x_\ell\).  
By \(U\)-equivariance and Proposition \ref{top 2} we have
\[
\partial^\infty_0(x_\ell) = x_{\ell-1} \qquad\text{ and }\qquad \partial^\infty_0(x_{\ell-1}) = U^{A_\ell - A_{\ell-1}} x_\ell = Ux_\ell.
\]
See for example Figure~\ref{fig:CFKinfty-bad}.  

We claim in fact that \(\partial^\infty_0(x_\ell) = \partial^\infty(x_\ell)\).  
Suppose \(U^c x_k\) appears in \(\partial^\infty(x_\ell)\).  
Since the \(U\)-powers provide a filtration on \(\CFK^\infty\), we know that \(c \geq 0\).  
Further, \(M(U^c x_k) = M(x_k) - 2c\).  
Therefore \(M(x_\ell) - M(U^c x_k) = 1\) if only if \(c = 0\) and \(k = \ell-1\), where the `if' direction follows from Proposition \ref{top 2}.  
Thus, \(\partial^\infty(x_\ell) = \partial^\infty_0(x_\ell)\).  
But now, \(\partial^\infty \circ \partial^\infty \) is non-zero on \(x_\ell\).  
This contradicts that \(\partial^\infty\) is a differential on \(\CFK^\infty(K)\).  
\end{proof}

We also make use of a more coarse property of L-space knots, for which we introduce notation.  

\begin{definition}
A bigraded vector space is \textbf{skeletal} if the rank of each summand of fixed Alexander grading is at most 1.  A knot is called skeletal if \(\HFKhat(K)\) is skeletal.
\end{definition}

\begin{example}\label{ex:LspaceKnot_are_skeletal}
	By Theorem~\ref{thm:lspace-ordered-stronger}, L-space knots are skeletal.
\end{example}


\begin{definition}\label{def:contiguous}
A bigraded vector space \(W\) is called \textbf{contiguous} if there exists a homogeneous basis \(\{x_1,\dots,x_k\}\) of \(W\) such that \(A(x_i)=A(x_{i+1})-1\) and \(\delta(x_i)\) is constant for all \(i=1,\dots, k\).
\end{definition}
	
Finally, we will collect the following property of two-bridge knots that will be useful later.  
\begin{lemma}\label{lem:2bridge}
Let \(Q_r, Q_s\) be rational tangles of slopes \(r, s\) respectively. 
If \(Q_r \cup Q_s\) is a skeletal knot then it is the torus knot \(T(2,n)\) for some odd \(n\).  In particular, the knot Floer homology is contiguous.   
\end{lemma}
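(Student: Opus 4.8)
The plan is to combine two classical inputs — that a two-bridge knot is alternating, hence Floer-thin, and the behaviour of the Alexander polynomial of alternating knots — with an elementary bound coming from the Tait graph.

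First I would record that \(Q_r\cup Q_s\) is a two-bridge link (the numerator closure of the sum of the two rational tangles), so, being a knot, it is a two-bridge knot \(\mathfrak{b}(p,q)\) with \(\det(Q_r\cup Q_s)=p\) (the Farey distance between the slopes \(r\) and \(s\)). As a two-bridge knot it is alternating, so by Ozsv\'ath and Szab\'o's description of the knot Floer homology of alternating knots it is \emph{thin}: \(\HFKhat(Q_r\cup Q_s)\) is supported in a single \(\delta\)-grading and \(\dim\HFKhat(Q_r\cup Q_s,A)=|a_A|\), where \(\Delta_{Q_r\cup Q_s}(t)=\sum_A a_A\,t^A\) is the symmetric Alexander polynomial. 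In addition, by the classical theorem of Crowell and Murasugi the coefficients \(a_A\) of an alternating knot are nonzero and strictly alternate in sign for all \(A\) with \(-g\le A\le g\), where \(g=g(Q_r\cup Q_s)\); hence \(p=|\Delta_{Q_r\cup Q_s}(-1)|=\sum_A|a_A|\ge 2g+1\).

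Next I would feed in the skeletal hypothesis. Skeletality says \(|a_A|=\dim\HFKhat(Q_r\cup Q_s,A)\le 1\) for every \(A\), so in fact \(|a_A|=1\) for all \(-g\le A\le g\) and the previous inequality is an equality: \(\det(Q_r\cup Q_s)=2g+1\). At this point I would invoke the classical fact (Bankwitz's inequality for alternating knots, equivalently the fact that a loopless bridgeless graph with \(m\) edges — here a checkerboard graph of a reduced alternating diagram — has at least \(m\) spanning trees, with equality only for a cycle) that the crossing number of an alternating knot is at most its determinant, with equality precisely for the \((2,n)\)-torus knots. Since a genus-minimizing Seifert surface comes from the alternating diagram, the crossing number is also at least \(2g+1\); combining, the crossing number equals \(2g+1=\det\) and therefore \(Q_r\cup Q_s=T(2,2g+1)\), with \(2g+1\) odd because it is a knot. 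Alternatively, and more in the spirit of this paper, one can note that the \(\delta\)-grading being constant forces the orderings of \(\HFKhat(Q_r\cup Q_s)\) by Maslov and by Alexander grading to agree, so Lemma~\ref{lem:lspacestructure} already shows \(Q_r\cup Q_s\) is an L-space knot, after which one appeals to the classification of alternating (equivalently thin) L-space knots as the \(T(2,n)\).

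The ``in particular'' is then immediate: applying Corollary~\ref{cor:gaps} to \(T(2,n)\), the gap \(|\delta_k-\delta_{k-1}|=A_k-A_{k-1}-1\) vanishes by thinness, so \(A_k-A_{k-1}=1\) for all \(k\); as each Alexander grading carries rank one, the ordered basis \(\{x_{-g},\dots,x_g\}\) exhibits \(\HFKhat(T(2,n))\) as contiguous. I expect the genuine obstacle to be exactly the step from the homological bound \(\dim\HFKhat\le 1\) to the topological identification \(Q_r\cup Q_s=T(2,n)\): since even the full bigraded \(\HFKhat\) does not distinguish two-bridge knots in general, this requires a real classification input (Bankwitz's sharp inequality, the continued-fraction classification of two-bridge knots of maximal genus for fixed determinant, or the classification of alternating L-space knots) rather than a purely Floer-theoretic argument.
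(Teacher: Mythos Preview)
Your proposal is correct. Moreover, your ``alternative'' route --- observing that thinness together with skeletality makes the orderings by Maslov and by Alexander grading coincide, applying Lemma~\ref{lem:lspacestructure} to conclude the knot is an L-space knot, and then invoking the classification of alternating L-space knots --- is exactly the paper's proof: the paper simply records that two-bridge knots are alternating and then cites \cite[Proposition~4.1]{HFKlens}, which is precisely that classification. The contiguity argument you give (Corollary~\ref{cor:gaps} plus thinness forces consecutive Alexander gradings) is equivalent to the paper's appeal to Theorem~\ref{thm:lspace-ordered-stronger} together with the explicit Alexander polynomial of \(T(2,n)\).

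Your primary route, via Crowell--Murasugi and the Bankwitz-type spanning-tree inequality, is a genuinely different and more self-contained argument: it replaces the black-box citation of \cite[Proposition~4.1]{HFKlens} with classical alternating-knot combinatorics. This buys independence from the Floer-theoretic classification at the cost of a longer argument and two small points that deserve a word of justification: the equality case of \(\tau(G)\ge m\) for loopless bridgeless planar \(G\) holds not only for the cycle \(C_m\) but also for its planar dual, the dipole \(D_m\) (both are Tait graphs of the standard \(T(2,m)\) diagram, so this does not affect the conclusion); and the lower bound \(c\ge 2g+1\), equivalently that a reduced alternating knot diagram has at least two Seifert circles, is true but not completely immediate. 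Since the knot at hand is in fact two-bridge, one could alternatively bypass Bankwitz and argue directly from the continued-fraction description of \(\Delta_{\mathfrak{b}(p,q)}\), as you hint. Either way, both routes are valid; the paper simply opts for the one-line citation.
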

\begin{proof}
It is well-known that two-bridge knots are alternating. By \cite[Proposition~4.1]{HFKlens}, the only alternating knots that are skeletal are the torus knots \(T(2,n)\) for odd \(n\).  The claim about gradings follows by combining Theorem~\ref{thm:lspace-ordered-stronger} with the structure of the Alexander polynomial of $T(2,n)$ torus knots.
\end{proof}

\section{Review of immersed curves for tangles}\label{sec:review:HFT}

In this section, we review some properties of the immersed curve invariant \(\HFT\) of four-ended tangles due to the third author \cite{PQMod, PQSym}.  We also establish some new results and tools that we will make use of in the proof of the Main Theorem.

\subsection{\texorpdfstring{The definition of \(\HFT\)}{The definition of HFT}}\label{sec:review:HFT:definition}

Given an oriented four-ended tangle \(T\) in a three-ball \(B^3\), the invariant \(\HFT(T)\) takes the form of a collection of immersed curves with local systems on the boundary of \(B^3\) minus the four tangle ends \(\partial T\).  
These immersed curves with local systems are defined in two steps, which we sketch below; for details, we refer the reader to~\cite{PQMod}. 

First, one fixes a particular auxiliary parametrization of \(\partial B^3\smallsetminus \partial T\) by four embedded arcs which connect the tangle ends. For example, the four gray dotted arcs in Figure~\ref{fig:HFT:example:tangle} define such a parametrization for the \((2,-3)\)-pretzel tangle. 
A tangle with such a parametrization can be encoded in a Heegaard diagram \((\Sigma,\bm{\alpha},\bm{\beta})\), where \(\Sigma\) is some surface with marked points. 
From this, one defines a relatively bigraded curved chain complex \(\CFTd(T)\) over a certain fixed \(\F\)-algebra \(\Ad\) as the multi-pointed Heegaard Floer theory of the triple \((\Sigma,\bm{\alpha},\bm{\beta})\), similar to Ozsváth and Szabó's link Floer homology~\cite{OSHFL}.  
One can show that the relatively bigraded chain homotopy type of \(\CFTd(T)\) is an invariant of the tangle \(T\) with the chosen parametrization \cite[Theorem~2.17]{PQMod}. 

The second step in the definition of \(\HFT(T)\) uses a classification of curved chain complexes over the algebra \(\Ad\). 
The classification says that the chain homotopy classes of bigraded chain complexes over \(\Ad\) are in one-to-one correspondence with free homotopy classes of bigraded immersed multicurves with local systems on a four-punctured sphere, which we will denote by~\(\FourPuncturedSphere\) \cite[Theorem~0.4]{PQMod}. 
By immersed curves, we mean immersions of \(S^1\) into \(\FourPuncturedSphere\) that define primitive elements of \(\pi_1(\FourPuncturedSphere)\); local systems are decorations of such immersed curves by invertible matrices over \(\F\), which are considered up to matrix similarity. 
Alternatively, one can regard local systems as vector bundles up to isomorphism, where \(\field\) is equipped with the discrete topology. 
Multiple parallel immersed curves (in the same bigrading) can be regarded as a single curve with a local system which is the direct sum of the individual local systems. 
This viewpoint, however, is only necessary for the classification result. 
Instead, we will often assume the opposite extreme viewpoint and treat the trivial \(n\)-dimensional local system as \(n\) distinct parallel curves. 
The bigrading on immersed curves, and in particular the Alexander grading, is described in more detail in Subsections~\ref{sec:review:HFT:Bigrading} and~\ref{sec:review:HFT:Alex} below. 
The correspondence between curved complexes and immersed multicurves uses a parametrization of \(\FourPuncturedSphere\) which is identical to the parametrization of \(\partial B^3\smallsetminus \partial T\). We will generally assume that the multicurves intersect the fixed parametrization minimally. Then, roughly speaking, these intersection points correspond to generators of the according curved chain complexes and paths between those intersection points correspond to the differentials. 
Finally, \(\HFT(T)\) is defined as the collection of relatively bigraded immersed curves on \(\FourPuncturedSphere\) which corresponds to the curved complex \(\CFTd(T)\). 
The orientation of the tangle \(T\) only affects the bigrading. 
One can show that the identification of \(\FourPuncturedSphere\) with \(\partial B^3\smallsetminus \partial T\) is natural; that is to say, if a tangle \(T'\) is obtained from \(T\) by adding twists to the tangle ends, the complex \(\CFTd(T')\) determines a new set of immersed curves \(\HFT(T')\), which agrees with the one obtained by twisting the immersed curves \(\HFT(T)\) accordingly \cite[Theorem~0.2]{PQSym}:

\begin{theorem}\label{thm:HFT:Twisting}
	The invariant \(\HFT\) commutes with the action of the mapping class group of \(\partial B^3\smallsetminus \partial T\). 
\end{theorem}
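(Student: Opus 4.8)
The plan is to reduce to a generating set of the mapping class group and verify the statement for each generator at the level of Heegaard diagrams. The mapping class group of \(\partial B^3\smallsetminus\partial T\) is the mapping class group of the four-punctured sphere, which is generated by the elementary half-twists \(\tau\) that interchange two adjacent tangle ends along a single crossing. Geometrically, such a \(\tau\) acts on \(\HFT(T)\) by postcomposition of the immersed curves with (a representative of) \(\tau\), together with the induced relabelling of the local systems; since this is visibly a group action and \(\CFTd\) is defined relative to a fixed parametrization, it is enough to show that for each such generator the curves \(\HFT(\tau(T))\), computed from \(\CFTd(\tau(T))\) with the standard parametrization, are obtained from \(\HFT(T)\) by applying \(\tau\). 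Equivalently, one may adopt the re-parametrization viewpoint: \(\CFTd\) of \(T\) with the parametrization \(\tau^{-1}(\text{standard})\) equals \(\CFTd(\tau(T))\) with the standard parametrization, because dragging the parametrizing arcs back into standard position is an ambient isotopy of \(B^3\) carrying \(T\) to \(\tau(T)\).

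Next I would realize \(\tau(T)\) as the tangle sum of \(T\) with a fixed elementary twist tangle \(E\), and describe the Heegaard diagram for \(\tau(T)\) as the one for \(T\) modified by a local twist-region insertion near the two relevant punctures. The cleanest way to identify the resulting complex is the pairing theorem for peculiar modules: one obtains a bigraded chain homotopy equivalence \(\CFTd(\tau(T))\simeq\CFTd(T)\boxtimes\CFTd(E)\) for the appropriate gluing/tensor operation, where \(\CFTd(E)\) is a small, explicit curved type-\(DA\) bimodule over \(\Ad\) which one computes by hand from a standard genus-zero diagram for \(E\). I would then check, by direct inspection of this bimodule, that tensoring with \(\CFTd(E)\) implements precisely the algebra automorphism of \(\Ad\) (up to a prescribed bigrading shift) that encodes the mapping class \(\tau\).

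The remaining step is to show that the classification bijection of \cite[Theorem~0.4]{PQMod}, between chain homotopy classes of curved complexes over \(\Ad\) and homotopy classes of bigraded immersed multicurves with local systems on \(\FourPuncturedSphere\), is equivariant for the two mapping class group actions. This is exactly where one uses that the parametrization of \(\FourPuncturedSphere\) in the classification coincides with the parametrization of \(\partial B^3\smallsetminus\partial T\): the algebra automorphism attached to \(\tau\) is, by construction, the one recording how the intersection points of a curve with the parametrizing arcs transform as those arcs are dragged along \(\tau\). Hence tensoring a curved complex with \(\CFTd(E)\) corresponds, under the classification, to postcomposing the associated multicurve with \(\tau\). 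It suffices to verify this on a generating family of curves — for instance the straight arcs joining pairs of punctures and small embedded loops around pairs of punctures — and then to extend using compatibility of both operations with the pairing formalism. Combining this with the previous paragraph yields \(\HFT(\tau(T)) = \tau_\ast\HFT(T)\), as desired.

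The step I expect to be the main obstacle is the bookkeeping of the bigrading. Adding a twist at the ends shifts the Alexander and Maslov gradings of \(\CFTd\) in a manner governed by the linking with the twist region, while on the curve side the grading is carried by the homotopy data of the curves and their local systems; one must check that the algebraic grading shift induced by \(\CFTd(E)\) matches this geometric shift, including signs and the relative-versus-absolute grading subtleties discussed in Subsections~\ref{sec:review:HFT:Bigrading} and~\ref{sec:review:HFT:Alex}. A secondary technical point is verifying that the twist-region insertion produces an admissible diagram and that the resulting handleslide and isotopy invariance is compatible with the pairing theorem, so that the equivalence \(\CFTd(\tau(T))\simeq\CFTd(T)\boxtimes\CFTd(E)\) holds at the level of bigraded chain homotopy type rather than merely up to an unspecified grading ambiguity.
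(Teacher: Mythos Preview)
The paper does not actually prove Theorem~\ref{thm:HFT:Twisting}; it is quoted verbatim from \cite[Theorem~0.2]{PQSym} and used as a black box throughout Section~\ref{sec:review:HFT}. So there is no ``paper's own proof'' to compare against --- the authors simply cite the result and move on.

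Your proposal is a reasonable sketch of how one \emph{might} go about proving such a naturality statement in a bordered-style framework, and indeed the argument in \cite{PQSym} proceeds along broadly similar lines (reduce to elementary twists, encode the twist as an explicit algebraic operation on \(\CFTd\), and check compatibility with the curve classification). That said, a couple of points in your outline are looser than they appear. First, the ``pairing theorem for peculiar modules'' you invoke, producing \(\CFTd(\tau(T))\simeq\CFTd(T)\boxtimes\CFTd(E)\) for some type-DA bimodule \(\CFTd(E)\), is not a theorem stated in \cite{PQMod}; the gluing theorem there (Theorem~\ref{thm:GlueingTheorem:HFT}) computes link Floer homology of a closed link, not the peculiar module of a composed tangle. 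To make your argument work you would either need to pass to the bordered sutured model mentioned in Section~\ref{sec:review:HFT:gluing} (where Zarev's bimodules for arc-slides are available) or to construct the twist bimodule for \(\Ad\) directly --- which is essentially what \cite{PQSym} does. Second, your claim that tensoring with \(\CFTd(E)\) ``implements precisely the algebra automorphism of \(\Ad\)'' understates the situation: a half-twist does not act by a mere automorphism of \(\Ad\), since it changes the lengths and types of the elementary paths around the punctures, so the bimodule is genuinely more complicated than a twist of scalars. None of this is fatal to your strategy, but it means the real work lies exactly in the steps you flag as ``compute by hand'' and ``direct inspection.''
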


\begin{example}
  Figure~\ref{fig:HFT:example:Curve:Downstairs} shows the four-punctured sphere \(\FourPuncturedSphere\), drawn as the plane plus a point at infinity minus the four punctures labelled \(\TEI\), \(\TEII\), \(\TEIII\), and \(\TEIV\), together with the standard parametrization which identifies \(\FourPuncturedSphere\) with \(\partial B^3\smallsetminus \partial T\). 
  The dashed curve along with the two immersed curves winding around the punctures form the invariant \(\HFT(T_{2,-3})\) for the \((2,-3)\)-pretzel tangle \cite[Example~2.26]{PQMod}. 
  All three components of this invariant carry the (unique) one-dimensional local system. 
\end{example}

\begin{example}
  A (parametrized) tangle is called rational if it is obtained from the trivial tangle \(\InlineTrivialTangle\) by adding twists to the tangle ends. 
  The name rational tangle originated with Conway, who showed that these tangles are in one-to-one correspondence with fractions \(\tfrac{p}{q}\in\QPI\)~\cite{Conway}. 
  We denote the rational tangle corresponding to a slope \(s\in\QPI\) by \(Q_s\). 
  The invariant \(\HFT(Q_s)\) consists of a single embedded curve which is the boundary of a disk embedded into \(B^3\) that separates the two tangle strands of \(Q_s\) \cite[Example~2.25]{PQMod}. 
  The local system on this curve is one-dimensional.
\end{example}
It is known that \(\HFT\) detects rational tangles, as follows.

\begin{theorem}\cite[Theorem~6.2]{PQMod}
\label{thm:HFT:rational_tangle_detection}
A tangle \(T\) is rational if and only if \(\HFT(T)\) consists of a single embedded component carrying the unique one-dimensional local system. 
\end{theorem}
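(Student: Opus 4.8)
The plan is to prove the two implications separately. The forward implication is the example above: \(\HFT(Q_s)\) is a single embedded curve carrying the unique one-dimensional local system, which follows from the computation of the invariant of the trivial tangle \(\InlineTrivialTangle\) together with Theorem~\ref{thm:HFT:Twisting} (twisting acts on \(\HFT\) through the mapping class group of \(\partial B^3 \smallsetminus \partial T\), which carries embedded curves to embedded curves and leaves their one-dimensional local systems unchanged). For the converse, suppose \(\HFT(T)\) is a single embedded curve \(\gamma\) with the unique one-dimensional local system. The first step is to pin down \(\gamma\) up to homotopy. By the structure theorem for the components of tangle multicurves \cite{PQSym}, \(\gamma\) is either a rational or a special curve; since the special curves have essential self-intersections while \(\gamma\) is embedded, \(\gamma\) must be rational, hence isotopic to the slope-\(s\) curve on \(\FourPuncturedSphere\) --- the essential simple closed curve separating the four punctures into the two pairs cut out by the strands of \(Q_s\) --- for a unique \(s\in\QPI\). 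Comparing decorations gives \(\HFT(T) = \HFT(Q_s)\).

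The second step uses the pairing theorem of \cite{PQMod}: for every slope \(s'\in\QPI\) the Lagrangian Floer homology of \(\HFT(T)\) with \(\HFT(Q_{s'})\) computes the link Floer homology \(\HFL(T\cup Q_{s'})\), so that
\[
\HFL(T\cup Q_{s'}) \cong \HFL(Q_s\cup Q_{s'})
\]
as relatively bigraded \(\F\)-vector spaces. For every \(s\) there are infinitely many slopes \(s'\) --- for instance the Farey neighbours of \(s\) --- for which \(Q_s\cup Q_{s'}\) is the unknot. For each such \(s'\), the right-hand side has total rank one; in particular \(T\cup Q_{s'}\) is a knot (so \(T\) has no closed components), and \(\HFKhat(T\cup Q_{s'})\) has rank one, whence \(T\cup Q_{s'}\) is the unknot by Ozsv\'ath and Szab\'o's detection of the unknot via knot Floer homology \cite{OS:genus}.

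The final step passes to double branched covers. Let \(M = \Sigma(B^3,T)\), a compact orientable three-manifold with torus boundary. For each \(s'\), the double branched cover \(\Sigma(S^3, T\cup Q_{s'})\) is the Dehn filling of \(M\) along the image of \(s'\) under the canonical injection from \(\QPI\) into the slopes on \(\partial M\); here one uses that the double branched cover of a rational tangle is a solid torus and that the double branched cover of a tangle sum is the corresponding gluing of the two tangle covers along their boundary tori. By the previous step, infinitely many --- hence at least two distinct --- of these fillings of \(M\) equal \(\Sigma(S^3,\text{unknot}) = S^3\). By the Gordon--Luecke knot complement theorem, a three-manifold with torus boundary admitting two distinct Dehn fillings homeomorphic to \(S^3\) is a solid torus, so \(M\) is a solid torus. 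It is classical --- this underlies the Montesinos trick --- that a four-ended tangle without closed components whose double branched cover is a solid torus is rational; therefore \(T\) is rational (necessarily \(T = Q_s\), since \(\HFT(Q_s) = \HFT(T)\)).

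The step I expect to be the main obstacle is this last one: converting the Floer-theoretic input ``infinitely many rational replacements of \(T\) are unknots'' into the topological conclusion about \(\Sigma(B^3,T)\), and hence about \(T\). One could instead remain entirely inside the curve formalism, using the detection of solid tori by the Hanselman--Rasmussen--Watson immersed-curve invariant of \(M\) \cite{HRW} in place of Gordon--Luecke, but the branched-cover route seems cleanest. A smaller subtlety is the identification of the homotopy class of \(\gamma\), for which \cite{PQSym} is used above; alternatively, one can argue directly that a component of \(\HFT(T)\) never encircles a single tangle end, since pairing such a curve with a disjoint rational curve would give vanishing Lagrangian Floer homology, contradicting the pairing theorem and the fact that link Floer homology never vanishes.
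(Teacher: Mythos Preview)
This theorem is not proved in the present paper; it is cited from \cite[Theorem~6.2]{PQMod} as a background result, so there is no in-paper proof to compare against directly.

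Your argument is correct. For the identification of the embedded curve, note that you do not actually need the full geography theorem from \cite{PQSym}: any primitive embedded simple closed curve in \(\FourPuncturedSphere\) either separates the punctures into two pairs (hence is a rational curve) or encircles a single puncture, and your closing alternative cleanly rules out the latter. This matters because \cite{PQSym} postdates \cite{PQMod}, so a faithful reconstruction of the original argument should avoid invoking it. The proof in \cite{PQMod} in fact takes a route closer in spirit to the split-tangle detection argument in Section~\ref{sec:detection:split} of this paper --- vanishing of the sutured Floer homology of the tangle complement with suitable sutures, hence failure of tautness, hence a product structure --- rather than passing through branched double covers. Your branched-cover/Gordon--Luecke approach is a genuinely different and pleasant alternative; the small technical points you anticipate (the \(V\)-factor in Theorem~\ref{thm:GlueingTheorem:HFT} cancelling on both sides, rank one forcing a single component and hence no closed tangle components, and distinct tangle slopes inducing distinct filling slopes on \(\partial\Sigma(B^3,T)\)) all work out as you indicate.
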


\begin{figure}[t]
  \centering
  \begin{subfigure}{0.28\textwidth}
  	\centering
  	\(\pretzeltangle\)
  	\caption{}\label{fig:HFT:example:tangle}
  \end{subfigure}
	\begin{subfigure}{0.28\textwidth}
		\centering
		\(\pretzeltangleDownstairs\)
		\caption{}\label{fig:HFT:example:Curve:Downstairs}
	\end{subfigure}
	\begin{subfigure}{0.4\textwidth}
		\centering
		\(\pretzeltangleUpstairs\)
		\caption{}\label{fig:HFT:example:Curve:Upstairs}
	\end{subfigure}
  \caption{(a) The pretzel tangle \(T_{2,-3}\) in the three-ball, (b) its invariant \(\HFT(T_{2,-3})\) in \(\FourPuncturedSphere\) consisting of a single rational component (the dashed curve) and a conjugate pair of special components (the solid curves), and (c) the lift of \(\HFT(T_{2,-3})\) to \(\PuncturedPlane\)}\label{fig:HFT:example}
\end{figure} 

\subsection{\texorpdfstring{A gluing theorem for \(\HFT\)}{A gluing theorem for HFT}}\label{sec:review:HFT:gluing}

The invariant \(\HFT(T)\) can be also defined via Zarev's bordered sutured Heegaard Floer theory~\cite{Zarev}. 
In this alternative construction, the curved chain complex \(\CFTd(T)\) is replaced by an (a posteriori equivalent) algebraic object, namely the bordered sutured type~D structure associated with the tangle complement which is equipped with a certain bordered sutured structure; for details, see~\cite[Section~5]{PQSym}. 
This perspective allows one to prove the following gluing result \cite[Theorem~5.9]{PQMod} which relates the invariant \(\HFT\) to link Floer homology \(\HFL\) via Lagrangian Floer homology \(\HF\).  For notation, throughout the rest of this article, let \(V\) be a two-dimensional vector space supported in a single relative \(\delta\)-grading and two consecutive relative Alexander gradings. Also, we will always assume that tangles are glued as in Figure~\ref{fig:tanglepairing}.

\begin{theorem}\label{thm:GlueingTheorem:HFT}
  Let \(L=T_1\cup T_2\) be the result of gluing two oriented four-ended tangles \(T_1\) and \(T_2\) together such that their orientations match. Let \(\mr(T_1)\) be the mirror image of \(T_1\) with the orientation of all components reversed. Then
  \[
  \HFL(L)\otimes V
  \cong
  \HF\left(\HFT(\mr(T_1)),\HFT(T_2)\right)
  \]
  if the four open components of the tangles become identified to the same component and 
  \[
  \HFL(L)
  \cong
  \HF\left(\HFT(\mr(T_1)),\HFT(T_2)\right)
  \]
  otherwise. 
\end{theorem}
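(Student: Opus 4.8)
The plan is to route the statement through the bordered sutured reformulation of \(\HFT\) and then invoke a pairing theorem. First, following \cite[Section~5]{PQSym}, I would replace \(\CFTd(T_i)\) by the (a posteriori equivalent) bordered sutured type~D structure of the tangle complement \(M_i \coloneqq B^3\smallsetminus\mathring{\nu}(T_i)\), equipped with the bordered sutured structure induced by the chosen parametrization of \(\partial B^3\smallsetminus\partial T_i\). The link exterior \(S^3\smallsetminus\mathring{\nu}(L)\) is obtained by gluing \(M_1\) and \(M_2\) along the four-punctured sphere separating them; since this glues two copies of \(B^3\) along their boundaries, one of the two pieces must be reflected, which is precisely the source of the mirror \(\mr(T_1)\) and of the mirrored curve \(\HFT(\mr(T_1))\) appearing in the statement. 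Zarev's gluing theorem for bordered sutured Floer homology~\cite{Zarev} then identifies the sutured Floer homology \(\SFH\) of the resulting sutured manifold with the homology of the box tensor product of the two type~D structures, that is, with \(H_*\big(\CFTd(\mr(T_1))\boxtimes\CFTd(T_2)\big)\).

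Next I would compare this \(\SFH\) with link Floer homology. The sutures carried by the four glued tangle ends determine how many basepoints lie on each component of \(L\). When the four open tangle strands close up into a single component of \(L\), that component carries one extra pair of basepoints relative to the minimal configuration computing \(\HFL(L)\); adding such a pair tensors link Floer homology with the two-dimensional space \(V\) by the standard basepoint-stabilization property. This reproduces exactly the dichotomy in the statement: \(\HFL(L)\otimes V\) in the single-component case and \(\HFL(L)\) otherwise. The remaining task is to translate the algebraic box tensor product into the Lagrangian Floer homology \(\HF\big(\HFT(\mr(T_1)),\HFT(T_2)\big)\). By the classification \cite[Theorem~0.4]{PQMod}, \(\CFTd(T_i)\) corresponds to the multicurve \(\HFT(T_i)\) with its local system, and I would show that once both curves are isotoped into minimal position, the generators of the box tensor complex are in bijection with the intersection points \(\HFT(\mr(T_1))\cap\HFT(T_2)\) (weighted by the fibers of the local systems there) and that the box-tensor differential counts exactly the immersed bigons between these points; this is most cleanly phrased as a pairing theorem in the \(A_\infty\)-category underlying the algebra \(\Ad\), whose objects are curves with local systems on \(\FourPuncturedSphere\). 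A final, essentially bookkeeping, step would check that the relative Alexander and \(\delta\)-gradings on \(\CFTd\) are compatible with the pairing, so that all the isomorphisms above respect the bigrading.

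The main obstacle is this last translation: converting the purely algebraic \(\boxtimes\) into the geometric count of intersection points and bigons, while correctly tracking local systems and gradings. The most reliable approach is to reduce both \(\CFTd(T_i)\) to a standard ``reduced'' form in which the box tensor complex has no differential internal to a single arc of the parametrization, so that what survives is visibly the Lagrangian Floer complex of the two curves; non-trivial local systems are then accommodated by tensoring the appropriate fibers at each intersection point, and the orientation of each \(T_i\) enters only through the grading shifts.
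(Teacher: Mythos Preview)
The paper does not actually prove this theorem; it is quoted as \cite[Theorem~5.9]{PQMod} and only a one-sentence sketch of its provenance is given, namely that it follows from interpreting \(\CFTd\) via Zarev's bordered sutured Heegaard Floer theory. Your outline is faithful to that sketch: you correctly identify the bordered sutured type~D structures from \cite[Section~5]{PQSym} as the bridge, invoke Zarev's pairing to get \(\SFH\) of the glued sutured link exterior as the homology of a box tensor product, and then compare sutures/basepoints to pass from \(\SFH\) to \(\HFL\) with the expected factor of \(V\). The final step---translating \(\boxtimes\) into wrapped Lagrangian Floer homology of the multicurves---is, as you note, the substantive part, and is precisely what is carried out in \cite[Section~5, especially Theorem~5.9]{PQMod}; your description of reducing each \(\CFTd\) to a form where the box tensor visibly becomes an intersection-and-bigon count is the right picture.

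One small caution: the identification of \(\SFH\) of the glued manifold with \(\HFL(L)\) or \(\HFL(L)\otimes V\) is not literally a single basepoint-stabilization, since the sutured structure on the glued exterior has meridional sutures on several arcs and one must check exactly which multi-pointed link Floer theory it computes; in \cite{PQMod} this is handled by an explicit comparison of Heegaard diagrams rather than by a black-box stabilization lemma. Your proposal would go through, but at that step you would need to be more careful than ``one extra pair of basepoints'' suggests.
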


The Lagrangian Floer homology \(\HF(\gamma,\gamma')\) of two immersed curves with local systems \(\gamma\) and \(\gamma'\) is a vector space generated by intersection points between the two curves. More precisely, one first arranges that the components are transverse and do not cobound immersed annuli; then, \(\HF(\gamma,\gamma')\) is the homology of the following chain complex: For each intersection point between \(\gamma\) and \(\gamma'\), there are \(n\cdot n'\) corresponding generators of the underlying chain module, where \(n\) and \(n'\) are the dimensions of the local systems of \(\gamma\) and \(\gamma'\), respectively. The differential is defined by counting certain bigons between these intersection points. As a consequence, the dimension of \(\HF(\gamma,\gamma')\) is equal to the minimal intersection number between the two curves times the dimensions of their local systems, provided that the curves are not parallel. If the curves are parallel, the dimension of \(\HF(\gamma,\gamma')\) may be greater than the minimal geometric intersection number for certain choices of local systems; for details, see~\cite[Sections~4.5 and~4.6, in particular Theorem~4.45]{PQMod}. For a more explicit example, suppose $\gamma$ and $\gamma'$ are parallel embedded curves of the same slope equipped with local systems of dimensions $n$ and $n'$ respectively.  Then, $\dim \HF(\gamma,\gamma')$ can realize any even number between 0 and $2 n \cdot n'$, depending on the local systems, even though the minimal geometric intersection number between these curves is zero.  Throughout, we will always assume that \(\gamma\) and \(\gamma'\) intersect minimally without cobounding immersed annuli.   

\(\HFT(\mr(T_1))\) can be easily computed from  \(\HFT(T_1)\). For this, let \(\mr\) be the involution of \(\FourPuncturedSphere\) whose fixed point set is the punctures and arcs in the parametrization and which interchanges the two components of the complement of the fixed set. Then one can show \cite[Definition~5.3 and Proposition~5.4]{PQMod}:

\begin{lemma}
  For any four-ended tangle \(T\), \(\HFT(\mr(T))\cong \mr(\HFT(T))\).
\end{lemma}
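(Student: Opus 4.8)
The plan is to follow the construction of $\HFT$ recalled in Subsection~\ref{sec:review:HFT:definition} and to track the effect of mirroring through each stage: on a Heegaard diagram, then on the curved complex over $\Ad$, and finally across the dictionary between curved complexes and immersed multicurves with local systems.

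First I would record how mirroring acts geometrically. The mirror $\mr(T)$ is obtained by reflecting the ball $B^3$ and reversing all orientations, and this reflection restricts to an orientation-reversing involution of $\partial B^3\smallsetminus\partial T$ that fixes the four tangle ends, preserves the parametrizing arcs, and interchanges the two complementary disks. Under the standard identification $\partial B^3\smallsetminus\partial T\cong\FourPuncturedSphere$, this is precisely the involution $\mr$. Consequently, if $(\Sigma,\bm\alpha,\bm\beta)$ is a Heegaard diagram for the parametrized tangle $T$, then reflecting the surface $\Sigma$ produces a Heegaard diagram for the parametrized tangle $\mr(T)$; the orientation reversal built into the definition of $\mr(T)$ is exactly what makes the two relative gradings transform consistently under this move.

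Next I would identify the curved complex associated with the reflected diagram. Reflecting $\Sigma$ keeps the generating set $\bm\alpha\cap\bm\beta$ fixed but reverses the orientation of every immersed bigon contributing to the differential and to the $\Ad$-action, which exhibits $\CFTd(\mr(T))$ as the dual of $\CFTd(T)$, regarded as a curved complex over the opposite algebra ${\Ad}^{\mathrm{op}}$ (with local systems transformed by inverse transpose). The reflection of $\FourPuncturedSphere$ also induces a grading-compatible isomorphism ${\Ad}^{\mathrm{op}}\cong\Ad$, so that $\CFTd(\mr(T))$ is recovered from $\CFTd(T)$ by an explicit algebraic operation. It then remains to check that, under the classification of curved complexes by immersed multicurves, this operation is implemented by applying the geometric involution $\mr$ to the curves: a curve is reconstructed from its minimal intersections with the (now $\mr$-invariant) parametrizing arcs together with the elementary paths between them, and $\mr$ carries a minimal position of $\gamma$ to a minimal position of $\mr(\gamma)$ with all of the arrows reversed, which is exactly the effect of the dualization above. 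To pin this down one can verify the correspondence on $\HFT(Q_s)$ for a single slope $s$ and on the special components, and then extend to all multicurves using the naturality of $\HFT$ under the mapping class group (Theorem~\ref{thm:HFT:Twisting}), since $\mr$ conjugates twisting along the tangle ends into twisting in the opposite sense. The step I expect to be the main obstacle is this bookkeeping: keeping careful track of how reflection acts on $\Ad$ and on the two relative gradings, and confirming that the composite of passing to the opposite algebra, dualizing, and identifying ${\Ad}^{\mathrm{op}}$ with $\Ad$ is realized on curves by exactly the reflection $\mr$ rather than by $\mr$ composed with some further symmetry of the four-punctured sphere. This identification is carried out in \cite[Definition~5.3 and Proposition~5.4]{PQMod}.
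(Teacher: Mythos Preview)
The paper does not give a proof of this lemma at all: it simply states the result and cites \cite[Definition~5.3 and Proposition~5.4]{PQMod}. Your proposal sketches the natural strategy behind that reference---track mirroring from the Heegaard diagram to the curved complex over $\Ad$ and then across the classification to immersed curves---and ends by citing the same source; so your approach is consistent with the paper's, only more detailed than what the paper itself records.
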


For example, \(\HFT(Q_{-s}) \cong \mr(\HFT(Q_s))\). 

In this paper, we are primarily interested in the statement of Theorem~\ref{thm:GlueingTheorem:HFT} for the case when \(T_1\cup T_2\) is a knot. We therefore introduce the following notation:
Suppose for two collections of bigraded immersed curves \(\Gamma_1\) and \(\Gamma_2\), there exists a bigraded vector space \(W\) which satisfies
\[
W\otimes V\cong \HF(\Gamma_1,\Gamma_2).
\]
Then we denote this vector space $W$ by \(\HFr(\Gamma_1,\Gamma_2)\). Note that it is well-defined up to relatively bigraded isomorphism, since \(\HF(\Gamma_1,\Gamma_2)\) is finite dimensional.  Hence, we have: 

\begin{theorem}\label{thm:GlueingTheorem:HFT:HFr}
  If \(T_1\cup T_2\) in Theorem~\ref{thm:GlueingTheorem:HFT} is a knot and \(\Gamma_i=\HFT(T_i)\) for \(i=1,2\), then 
  \begin{equation*}
    \HFKhat(T_1 \cup T_2)
    \cong
    \HFr\left(\mr(\Gamma_1),\Gamma_2\right).\qedhere
  \end{equation*}
\end{theorem}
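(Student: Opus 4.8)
The plan is to read this off almost directly from the gluing Theorem~\ref{thm:GlueingTheorem:HFT}, after disposing of two small pieces of bookkeeping: deciding which case of that theorem applies, and matching up notation. First I would observe that since \(L\coloneqq T_1\cup T_2\) is assumed to be a knot, it is connected, which is precisely the hypothesis ``the four open components of the tangles become identified to the same component'' appearing in Theorem~\ref{thm:GlueingTheorem:HFT}. Hence we are in the first case of that theorem, which yields a relatively bigraded isomorphism
\[
\HFL(L)\otimes V\cong \HF\bigl(\HFT(\mr(T_1)),\HFT(T_2)\bigr).
\]
Next I would rewrite the right-hand side: by the lemma identifying \(\HFT(\mr(T))\) with \(\mr(\HFT(T))\), and using \(\Gamma_i=\HFT(T_i)\), this is \(\HF\bigl(\mr(\Gamma_1),\Gamma_2\bigr)\).

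The remaining steps are to translate \(\HFL\) into \(\HFKhat\) and to unwind the definition of \(\HFr\). For a one-component link, i.e.\ a knot \(K\), link Floer homology agrees with knot Floer homology, \(\HFL(K)\cong\HFKhat(K)\) as relatively bigraded vector spaces; substituting gives \(\HFKhat(L)\otimes V\cong \HF\bigl(\mr(\Gamma_1),\Gamma_2\bigr)\). By definition, \(\HFr\bigl(\mr(\Gamma_1),\Gamma_2\bigr)\) is any bigraded vector space \(W\) with \(W\otimes V\cong \HF\bigl(\mr(\Gamma_1),\Gamma_2\bigr)\), so the displayed isomorphism exhibits \(\HFKhat(L)\) as such a \(W\), and the desired conclusion follows. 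The one point that genuinely needs to be checked is that \(\HFr\) is well defined, that is, that \(W\) is determined up to relatively bigraded isomorphism by \(W\otimes V\): since \(V\) is two-dimensional, supported in a single relative \(\delta\)-grading and two consecutive relative Alexander gradings, the two-variable Poincaré polynomial of \(W\otimes V\) is that of \(W\) multiplied by a monomial times a factor of the form \(1+qt\), which is not a zero divisor in the relevant Laurent polynomial ring; as all the spaces in sight are finite dimensional, \(W\) is recovered uniquely. (Existence of \(W\) is all the definition presupposes, and here it is supplied by \(W=\HFKhat(L)\).)

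The main obstacle, such as it is, is not mathematical depth but care with the setup: one must confirm that the connectedness hypothesis really does land us in the tensor-with-\(V\) case of Theorem~\ref{thm:GlueingTheorem:HFT} rather than the other one, and one must be precise about the grading conventions both in the identification \(\HFL(K)\cong\HFKhat(K)\) and in the cancellation argument, so that the final isomorphism is genuinely an isomorphism of \emph{relatively bigraded} vector spaces. No new Floer-theoretic input beyond what is already stated is required.
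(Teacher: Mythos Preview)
Your proposal is correct and follows exactly the route the paper intends: the theorem is stated with ``Hence, we have:'' and a \(\qedhere\), i.e.\ it is meant as an immediate consequence of Theorem~\ref{thm:GlueingTheorem:HFT}, the lemma \(\HFT(\mr(T))\cong\mr(\HFT(T))\), and the definition of \(\HFr\), which is precisely what you unpack. Your added remark on well-definedness of \(\HFr\) via Poincar\'e polynomials is a welcome elaboration of what the paper only asserts in the sentence preceding the theorem.
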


\subsection{\texorpdfstring{The geography problem for components of \(\HFT\)}{The geography problem for components of HFT}}\label{sec:review:HFT:geography}

Often, it is useful to consider the immersed curves in a covering space of \(\FourPuncturedSphere\), namely the plane \(\mathbb{R}^2\) minus the integer lattice \(\mathbb{Z}^2\). 
We may think of \(\mathbb{R}^2\) as the universal cover of the torus which is the double branched cover of the sphere \(S^2\) branched at four marked points; then the integer lattice \(\mathbb{Z}^2\) is the preimage of the branch set.
This covering space is illustrated in Figure~\ref{fig:HFT:example:Curve:Upstairs}, where the standard parametrization of \(\FourPuncturedSphere\) has been lifted to \(\PuncturedPlane\) and the front face and its preimage under the covering map are shaded grey. 
The preimages of a puncture \(\TEi\) of \(\FourPuncturedSphere\) are labelled \(\tilde{\TEi}\).
This picture also includes the lifts of the curves in \(\HFT(T_{2,-3})\). 
The lift of the embedded (dashed) curve is a straight line of slope \(\tfrac{1}{2}\). 
More generally, the lift of an embedded curve which looks like the invariant of a \(\tfrac{p}{q}\)-rational tangle is a straight line of slope \(\tfrac{p}{q}\). We call such curves \textbf{rational} and denote them by \(\Rational(\tfrac{p}{q})\). 
The lifts of the two non-embedded components of  \(\HFT(T_{2,-3})\) look more complicated. Remarkably, however, this extremely simple example shows almost all the complexity of the immersed curves that can appear as components of \(\HFT(T)\) for four-ended tangles \(T\). 

To understand the geography of components of \(\HFT\) in general, observe that the linear action on the covering space \(\PuncturedPlane\) by \(\SL_2(\mathbb{Z})\) corresponds to Dehn twisting in \(\FourPuncturedSphere\), or equivalently, adding twists to the tangle ends \cite[Observation~3.2]{PQSym}.
We call a curve in \(\FourPuncturedSphere\) \textbf{special} if, after some twisting, it is equal to the curve \(\Special_n(0;\TEi,\TEj)\) whose lift to \(\PuncturedPlane\) is shown in Figure~\ref{fig:intro:curves}. Note that the lift of any such curve can be isotoped into an arbitrarily small neighborhood of a straight line of some rational slope \(\frac{p}{q}\in\QPI\) going through some punctures \(\tilde{\TEi}\) and \(\tilde{\TEj}\), in which case we denote this curve by \(\Special_n(\frac{p}{q};\TEi,\TEj)=\Special_n(\frac{p}{q};\TEj,\TEi)\). One can then show \cite[Theorem~0.5]{PQSym}:

\begin{figure}[t]
  \centering
  \(\rigidcurveIntro\)
  \caption{The lift of the curve \(\Special_n(0;\TEi,\TEj)\), where \(n\in\mathbb{N}\) and \((\TEi,\TEj)=(\TEIV,\TEI)\) or \((\TEII,\TEIII)\)}\label{fig:intro:curves}
\end{figure}

\begin{theorem}\label{thm:geography_of_HFT}
  For a four-ended tangle \(T\), the underlying curve of each component of \(\HFT(T)\) is either rational or special. 
  Moreover, if it is special, its local system is equal to an identity matrix. 
\end{theorem}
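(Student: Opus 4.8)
The plan is to recast the statement as a classification problem for the immersed curves that can occur as components of \(\HFT(T)\), and then to settle that problem using the elliptic symmetry of \(\HFT\) together with the \(\delta\)-grading. By the classification of curved complexes over \(\Ad\) that underlies the definition of \(\HFT\), each component is an immersed primitive curve \(\gamma\) with a local system in \(\FourPuncturedSphere\). The first point is that such a \(\gamma\) has a well-defined slope \(\tfrac pq\in\QPI\): this can be read off by lifting \(\gamma\) to the torus double cover of \(\FourPuncturedSphere\), where it represents a homology class, once the degenerate possibilities (a curve encircling a single puncture, say) have been excluded — and these are excluded by the \(\delta\)-grading, since such a loop cannot carry a consistent grading. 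Granting this, Theorem~\ref{thm:HFT:Twisting} — equivalently, the \(\SL_2(\Z)\)-action on the plane cover \(\PuncturedPlane\) — lets one apply a mapping class of \(\partial B^3\smallsetminus\partial T\) to bring the slope of every component to \(0\). So it suffices to classify the slope-\(0\) components.

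The key structural input is that \(\HFT(T)\) is invariant, up to isotopy and at least as a \(\delta\)-graded object, under an elliptic involution \(\sigma\) of \(\FourPuncturedSphere\) that swaps the four punctures in pairs. (This is a different and deeper symmetry than the mirror symmetry \(\HFT(\mr(T))\cong\mr(\HFT(T))\) recalled above.) I would prove it by choosing the multi-pointed Heegaard diagram defining \(\CFTd(T)\) so that it carries a symmetry inducing \(\sigma\), checking that the resulting involution of \(\CFTd(T)\) respects the \(\F\)-algebra structure and the relative \(\delta\)-grading, and then invoking the uniqueness in the classification of~\cite{PQMod} to descend to the curve level; this is the symmetry theorem of~\cite{PQSym}, and it is also what produces \(\delta\)-graded Conway-mutation invariance of link Floer homology. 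This step is the main obstacle: producing the symmetric Heegaard diagram and verifying that the induced involution really is a symmetry of the curved \(\Ad\)-complex, with all gradings tracked, is the technical heart of the whole argument.

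With the symmetry in hand, \(\sigma\) acts on the set of slope-\(0\) components and one can use it to constrain their shapes. I would enumerate the \(\sigma\)-symmetric slope-\(0\) immersed primitive curves directly: passing to the orbifold quotient \(\FourPuncturedSphere/\sigma\), a sphere with two punctures and two cone points of order two, leaves a curve essentially no room to maneuver. Up to isotopy it either misses both cone points, in which case it lifts to a straight line of slope \(0\), i.e.\ a rational component \(\Rational(0)\); or it runs through the cone points — equivalently, it winds around a line through two of the punctures — and then its lift is forced to be one of the curves \(\Special_n(0;\TEi,\TEj)\) of Figure~\ref{fig:intro:curves}. Here the \(\delta\)-grading enters again: its net change around \(\gamma\) must vanish and its behaviour near the punctures is constrained, which rules out the intermediate ``mixed'' windings and pins down the pair \((\TEi,\TEj)\).

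Finally one checks that a special component carries the identity local system. A curve \(\Special_n(0;\TEi,\TEj)\) decorated with a local system given by a matrix \(X\) not similar to an identity matrix would, traversed once, yield a \(\delta\)-graded curved complex whose monodromy \(X\) is incompatible either with the \(\sigma\)-symmetry established above or with the reducedness of the peculiar module; extracting a contradiction forces \(X\) to be an identity matrix. Apart from the symmetry step, the work is then elementary surface topology together with bookkeeping of the \(\delta\)-grading — but that bookkeeping, and the local-system argument in particular, genuinely rely on the grading input and not on topology alone.
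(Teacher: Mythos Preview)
The paper does not prove this theorem at all; it is quoted verbatim from the external reference \cite[Theorem~0.5]{PQSym} and used as a black box. There is therefore no in-paper argument to compare your proposal against.

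As a sketch of how the result is actually established in \cite{PQSym}, your outline is in the right neighbourhood: one does reduce to a fixed slope via the mapping class group action, and the \(\delta\)-grading constraint (the net change around a closed curve must vanish) together with conjugation-type symmetry are the decisive structural inputs. Two places in your write-up are genuine gaps rather than routine omissions, however. First, the symmetry you invoke is a symmetry of the \emph{multicurve} \(\HFT(T)\), not of each component separately; your passage to the orbifold quotient \(\FourPuncturedSphere/\sigma\) tacitly assumes each component is individually \(\sigma\)-invariant, and nothing you have said rules out \(\sigma\) permuting components of different shapes. Second, your argument that special components carry the identity local system is too vague to assess: neither \(\sigma\)-symmetry nor ``reducedness of the peculiar module'' visibly forces the monodromy \(X\) to be the identity, and the actual argument in \cite{PQSym} requires a finer analysis of how a nontrivial local system on a figure-eight curve interacts with the \(\delta\)-grading on the associated curved complex. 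The overall architecture is plausible, but these two steps would need real work before the outline becomes a proof.
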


For example, we can now write
\(\HFT(T_{2,-3})\) as the union of the rational curve \(\Rat(\frac{1}{2})\) and the two special components \(\Special_1(0;\TEIV,\TEI)\) and \(\Special_1(0;\TEII,\TEIII)\). 
Special components for $n>1$ show up in the invariants of two-stranded pretzel tangles with more twists \cite[Theorem~6.9]{PQMod}. 
Whether rational components with non-trivial local systems appear in \(\HFT\) is currently not known. 

\subsection{\texorpdfstring{Properties of \(\HFT\)}{Properties of HFT}}\label{sec:review:HFT:Properties}

The following result is a simplified version of 
\cite[Theorem~0.10]{PQSym} which is sufficient for our purposes.

\begin{theorem}[Conjugation symmetry]\label{thm:Conjugation}
  Let \(\TEi\), \(\TEj\), \(\TEk\), and \(\TEl\) be integers such that \(\{\TEi,\TEj,\TEk,\TEl\}=\{\TEI,\TEII,\TEIII,\TEIV\}\). 
  Moreover, let \(\frac{p}{q}\in\QPI\). 
  Then, for any four-ended tangle \(T\), the numbers of components of the form \(\Special_n(\frac{p}{q};\TEi,\TEj)\) and \(\Special_n(\frac{p}{q};\TEk,\TEl)\) in \(\HFT(T)\) agree. 
\end{theorem}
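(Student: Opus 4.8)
The plan is to obtain Theorem~\ref{thm:Conjugation} as a direct consequence of the full conjugation symmetry of the peculiar module, \cite[Theorem~0.10]{PQSym}. That result says that \(\HFT(T)\), viewed as a collection of bigraded immersed curves with local systems, is invariant under a fixed conjugation involution \(\mathfrak{c}\) of \(\FourPuncturedSphere\) (together with the induced relabelling of the bigrading); on the algebraic side this is the statement that the peculiar module \(\CFTd(T)\) over \(\Ad\) is chain homotopy equivalent to its conjugate. So the substance of the proof is to understand how \(\mathfrak{c}\) acts on the two kinds of components classified in Theorem~\ref{thm:geography_of_HFT}, and then to read off the claim about multiplicities.

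First I would pin down \(\mathfrak{c}\) concretely on the covering space \(\PuncturedPlane\): it lifts to an affine involution of \(\R^2\) preserving the lattice \(\Z^2\), hence it preserves the slope of any line through lattice points and preserves the length parameter \(n\) of a special curve, since the lift of \(\Special_n(\tfrac pq;\TEi,\TEj)\) can be isotoped into a small neighbourhood of a slope-\(\tfrac pq\) line through the punctures \(\tilde{\TEi},\tilde{\TEj}\), and an affine map carries such a neighbourhood onto a neighbourhood of its image line. In particular \(\mathfrak{c}\) fixes every rational curve \(\Rat(\tfrac pq)\), consistent with the rational tangle invariant \(\HFT(Q_s)=\Rat(s)\), and it sends a special curve of slope \(\tfrac pq\) to a special curve of the same slope and the same \(n\). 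It remains only to determine which punctures the image curve runs through. A line of slope \(\tfrac pq\) through lattice points meets exactly two of the four puncture preimages up to lattice translation, and there are exactly two such parallel families, so for a fixed slope there are exactly two admissible unordered pairs of punctures and together they partition \(\{\TEI,\TEII,\TEIII,\TEIV\}\). What I would then verify is that \(\mathfrak{c}\) interchanges these two pairs, i.e.\ carries \(\Special_n(\tfrac pq;\TEi,\TEj)\) to \(\Special_n(\tfrac pq;\TEk,\TEl)\) with \(\{\TEi,\TEj,\TEk,\TEl\}=\{\TEI,\TEII,\TEIII,\TEIV\}\). Since the local systems on special curves are identity matrices by Theorem~\ref{thm:geography_of_HFT}, there is nothing else to match. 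Granting this, the conclusion is immediate: because \(\HFT(T)\) coincides with its image under \(\mathfrak{c}\) as a multiset of bigraded components, the multiplicity of \(\Special_n(\tfrac pq;\TEi,\TEj)\) equals the multiplicity of its image \(\Special_n(\tfrac pq;\TEk,\TEl)\), which is exactly the assertion.

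The hard part is not conceptual but a matter of correctly transcribing \cite[Theorem~0.10]{PQSym}: that theorem is phrased algebraically, so the work lies in translating the conjugation of \(\CFTd(T)\) over \(\Ad\) into the homeomorphism \(\mathfrak{c}\) of \(\FourPuncturedSphere\) and then checking its effect on the classified special curves — in particular, confirming that \(\mathfrak{c}\) acts by the puncture-pair swap and does not, say, alter \(n\), change the slope, or interact with the bigrading in a way that would obstruct matching a component with its image. An alternative, self-contained but messier route would bypass \cite{PQSym}: pair \(T\) successively with all rational tangles \(Q_s\), use Theorem~\ref{thm:GlueingTheorem:HFT} together with the conjugation symmetry of link Floer homology to constrain the geometric intersection numbers of \(\HFT(T)\) with lines of every slope, and recover the component counts from these constraints; I would fall back on this only if the transcription above turned out to be delicate.
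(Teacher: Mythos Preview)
Your proposal is correct and is essentially the same as what the paper does: the paper gives no proof at all of Theorem~\ref{thm:Conjugation}, stating only that it is ``a simplified version of \cite[Theorem~0.10]{PQSym}'', and your argument is precisely a sketch of how to extract the multiplicity statement from that cited result. The only additional content you supply is the geometric description of the conjugation involution on \(\PuncturedPlane\) and its effect on special curves, which the paper leaves entirely to the reference.
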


There are also restrictions on rational components. The following is \cite[Observation 6.1]{PQMod}:

\begin{lemma}\label{lem:HFT_detects_connectivity}
  Each rational component of \(\HFT(T)\) separates the four punctures into two pairs, which agrees with how the two open components of \(T\) connect the tangle ends.
\end{lemma}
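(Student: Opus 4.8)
The plan is to recover the pairing of punctures cut out by a rational component directly from the Alexander grading carried by \(\HFT(T)\). Recall that the relative bigrading on the curved complex \(\CFTd(T)\) underlying \(\HFT(T)\) refines to an Alexander \emph{multigrading} indexed by the two open strands of \(T\), and --- this is the crucial point --- that the two Alexander coordinates are distinguished by \emph{which pair of tangle ends the corresponding strand joins}. Geometrically, on \(\FourPuncturedSphere\) this multigrading is recorded by how a curve winds around the four punctures, which are partitioned into two pairs according to how the strands of \(T\) run; as one moves along a curve the two Alexander coordinates change by \(\pm1\) across the four parametrizing arcs, and which coordinate changes across a given arc depends only on this partition. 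Since each component of \(\HFT(T)\) is a loop, the net change of each Alexander coordinate around it must vanish, and this is the constraint we will exploit.

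First, using Theorem~\ref{thm:HFT:Twisting} and the transitivity of the \(\SL_2(\Z)\)-action on slopes, I would reduce --- by twisting the tangle ends --- to the case that the component in question is the curve \(\Rat(0)\); this is harmless because twisting transforms the component's slope and the connectivity of \(T\) in lockstep, so it suffices to prove that a tangle with a \(\Rat(0)\)-component has connectivity equal to the partition of the punctures that \(\Rat(0)\) realizes. For this I would compute, from a Heegaard diagram for the trivial tangle, the Alexander-graded homotopy type of \(\HFT(Q_0)=\Rat(0)\), and then argue that its two Alexander coordinates can close up around the loop only when the four parametrizing arcs are grouped into the two pairs dictated by the connectivity of \(Q_0\): for any other grouping, the \(\pm1\) changes forced along \(\Rat(0)\) are incompatible with periodicity of one of the coordinates, equivalently with the existence of a genuine \(\Z^2\)-valued (rather than merely quotient-valued) Alexander grading. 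As this compatibility argument sees only the slope of the component and the grouping of the arcs --- which, by construction of \(\HFT\), is the connectivity of the ambient tangle --- it applies verbatim to a \(\Rat(0)\)-component of any \(T\), and undoing the twist yields the statement for the original rational component.

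I expect the bookkeeping in the second step to be the main obstacle: one must extract from the definition of the Alexander grading in \cite{PQMod} exactly how each Alexander coordinate changes across each of the four parametrizing arcs, and then check that for the ``wrong'' groupings of the arcs the linear system expressing periodicity of the coordinates along \(\Rat(0)\) has no integral solution. This is elementary but somewhat delicate, since the Alexander grading is only relative; I would organize the computation around the four punctures, tracking for each segment of the curve the signed winding it contributes and comparing with the corresponding data read off the Heegaard diagram of \(Q_0\). A more hands-off but, I believe, less efficient alternative would be to run rational tangle replacements through the gluing theorem (Theorem~\ref{thm:GlueingTheorem:HFT}): capping \(T\) off with \(Q_{-p/q}\) makes the conjugate \(\mr(\gamma)\) parallel to \(\HFT(Q_{-p/q})\), after which one would compare the Alexander-grading support of \(\HF(\HFT(\mr(T)),\HFT(Q_{-p/q}))\) with what Theorem~\ref{thm:GlueingTheorem:HFT} predicts according to whether the closure is a knot or a two-component link; the trouble is that this only pins down the contribution of the single component \(\gamma\), not that of \(\HFT(T)\) as a whole.
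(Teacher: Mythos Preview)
The paper does not prove this lemma: it simply records it as \cite[Observation~6.1]{PQMod}. So there is no argument here to compare against directly; what I can do is assess your plan on its own terms.

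Your core idea --- that the well-definedness of the $\mathfrak{A}$-valued Alexander grading on a closed component of $\HFT(T)$ forces the periodicity constraint, and that this constraint picks out the matching --- is exactly right, and is almost certainly the intended justification in \cite{PQMod}. But you are making it harder than it needs to be. The reduction to $\Rat(0)$ and the detour through a Heegaard computation for $Q_0$ are unnecessary: the obstruction is purely homological and can be read off for any slope at once. Concretely, if a rational component encircles the pair $\{a,b\}$, then summing the local Alexander increments of Subsection~\ref{sec:review:HFT:Bigrading} around the loop gives a total change of $\pm 2(e_a+e_b)\in\Z^4$ (each of the two square faces contributes $-(e_a+e_b)$ or $-(e_c+e_d)$, and one checks this directly for the simplest slope and then notes that twisting permutes both the pair $\{a,b\}$ and the parametrizing arcs compatibly). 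This vanishes in $\GeneralAlex=\Z^4/(e_{\TEi_1}+e_{\TEo_1},\,e_{\TEi_2}+e_{\TEo_2})$ if and only if $\{a,b\}$ is one of the pairs in $T$'s ordered matching. Since the $\GeneralAlex$-grading on $\HFT(T)$ \emph{is} well-defined (this is part of the construction in \cite{PQMod}), the conclusion follows immediately. No case analysis over ``wrong'' groupings or linear systems is needed.

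Your alternative via the gluing theorem can also be made to work, and your worry about isolating the contribution of a single component is a real one but not fatal: one can instead detect the connectivity from whether the rational closure $Q_{-s}\cup T$ is a knot or a two-component link and compare with the parity of $\dim\HF(\Rat(s),\HFT(T))$, though this is strictly more work than the grading argument above.
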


\begin{corollary}\label{cor:pairing-knots}
  If \(T_1 \cup T_2\) is a knot, then \(Q_r \cup Q_s\) is a knot for any rational slopes \(r\) and \(s\) appearing as rational curve components in \(\HFT(T_1)\) and \(\HFT(T_2)\). \qed
\end{corollary}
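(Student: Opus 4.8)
\textbf{Proof proposal for Corollary~\ref{cor:pairing-knots}.}
The plan is to reduce the statement to a purely combinatorial fact about how the four tangle ends are connected, and then feed in Lemma~\ref{lem:HFT_detects_connectivity}. Recall that a four-ended tangle with no closed components connects its four ends in one of three possible ways; call this its \emph{connectivity}, i.e.\ the induced perfect matching on the four ends. The number of components of a glued-up diagram $T_1\cup T_2$ depends only on the connectivities of $T_1$ and $T_2$: gluing the two end-matchings produces a disjoint union of circles, one for each cycle of the combined matching. Since $T_1\cup T_2$ is a knot, neither $T_i$ can contain a closed component (such a component would survive as a separate component of $T_1\cup T_2$), so each $T_i$ has a well-defined connectivity, and being a knot says precisely that these two matchings combine into a single $4$-cycle.

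Next I would read off the connectivity of each $T_i$ from its rational slope. By hypothesis $\Rat(r)$ occurs as a rational component of $\HFT(T_1)$, so Lemma~\ref{lem:HFT_detects_connectivity} tells us that the partition of the four punctures into two pairs induced by $\Rat(r)$ coincides with the connectivity of $T_1$ (under the fixed identification of the punctures of $\FourPuncturedSphere$ with the tangle ends). Since $\HFT(Q_r)$ is by definition the single rational curve $\Rat(r)$, the same lemma applied to $Q_r$ shows that the connectivity of the rational tangle $Q_r$ is given by exactly this partition; hence $Q_r$ and $T_1$ have equal connectivity. Likewise $Q_s$ has the connectivity of $T_2$. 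As rational tangles contain no closed components, $Q_r\cup Q_s$ then has the same number of components as $T_1\cup T_2$, namely one, and is therefore a knot.

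There is essentially no obstacle here: the statement follows formally once one has Lemma~\ref{lem:HFT_detects_connectivity} in hand, which is why it is recorded as an immediate corollary. The only point requiring a little care is that ``connectivity'' is genuinely well defined for the $T_i$, which is why one first rules out closed components using the hypothesis that $T_1\cup T_2$ is a knot. One could alternatively phrase the middle step via Theorem~\ref{thm:HFT:rational_tangle_detection}: detection of rational tangles identifies the component $\Rat(r)\subseteq\HFT(T_1)$ with $\HFT(Q_r)$ on the nose, after which Lemma~\ref{lem:HFT_detects_connectivity} applied to both $T_1$ and $Q_r$ yields the equality of connectivities.
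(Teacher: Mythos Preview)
Your proposal is correct and follows exactly the approach the paper intends: the corollary is stated with a \qed\ and no proof, signalling that it is an immediate consequence of Lemma~\ref{lem:HFT_detects_connectivity}, which is precisely the reduction you spell out. Your explicit handling of closed components and of why connectivity alone determines the component count is a welcome elaboration of what the paper leaves implicit.
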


\begin{corollary}\label{cor:pairing-knots:local_systems}
  For \(i=1,2\), let \(T_i\) be a four-ended tangle and let \(\Gamma_i\) be the multicurve obtained from \(\HFT(T_i)\) by 
  replacing every non-trivial local system by a trivial one of the same dimension.
  If \(T_1 \cup T_2\) is a knot then \(\HFKhat(T_1 \cup T_2)\cong \HFr(\mr(\Gamma_1),\Gamma_2)\). 
\end{corollary}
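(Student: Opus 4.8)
The plan is to build on Theorem~\ref{thm:GlueingTheorem:HFT:HFr}, which already gives \(\HFKhat(T_1\cup T_2)\cong\HFr(\mr(\HFT(T_1)),\HFT(T_2))\). It therefore suffices to show that trivialising all local systems --- i.e.\ replacing \(\HFT(T_i)\) by \(\Gamma_i\) --- does not change the relatively bigraded isomorphism type of \(\HF(\mr(\HFT(T_1)),\HFT(T_2))\); since \(\HFr(\cdot,\cdot)\) is defined so that \(\HFr(\cdot,\cdot)\otimes V\cong\HF(\cdot,\cdot)\), the corresponding statement for \(\HFr\), and hence the corollary, follows at once. I would check this component by component, exploiting the principle implicit in the discussion preceding Theorem~\ref{thm:GlueingTheorem:HFT}: a local system carried by a curve \(\gamma\) affects the Floer pairing of \(\gamma\) against a multicurve \(\Gamma'\) only when \(\gamma\) is parallel to some component of \(\Gamma'\); otherwise \(\gamma\) contributes to \(\HF(\gamma,\Gamma')\) only through the dimension of its local system.

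First I would locate the non-trivial local systems. By Theorem~\ref{thm:geography_of_HFT}, each component of \(\HFT(T_i)\) is rational or special, and special components carry the identity local system; hence all non-trivial local systems sit on rational components. Since \(\mr\) is a homeomorphism of \(\FourPuncturedSphere\) that fixes each of the four punctures, it sends rational components to rational components separating the same pair of punctures, and special components to special components (still with identity local system). Thus, even after applying \(\mr\) to \(\HFT(T_1)\), every component bearing a non-trivial local system is rational, with unchanged connectivity.

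The crux --- and the step I expect to require the most care --- is to rule out parallelism between such a rational component and any component it is paired against. A rational component is never parallel to a special one, since the two types occupy disjoint sets of free homotopy classes (the lift of a rational curve is a straight line missing the lattice, whereas the lift of a special curve winds around lattice points, cf.\ Figure~\ref{fig:intro:curves}), so only rational--rational pairs are at issue. Here I would use the knot hypothesis via Lemma~\ref{lem:HFT_detects_connectivity}: each rational component of \(\HFT(T_i)\) separates the punctures according to how the open strands of \(T_i\) join the tangle ends, and if \(T_1\) and \(T_2\) shared this connectivity the open strands would close up into two disjoint circles, contradicting that \(T_1\cup T_2\) is a knot. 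Thus the two connectivities differ; since a rational curve is determined up to homotopy by its slope and distinct connectivities force distinct slopes, no rational component of \(\mr(\HFT(T_1))\) is parallel to any rational component of \(\HFT(T_2)\). Consequently every component carrying a non-trivial local system is non-parallel to every component of the opposite multicurve, and by \cite[Sections~4.5 and~4.6]{PQMod} the associated Floer complex then has vanishing differential and equals, as a bigraded vector space, the direct sum over the minimally many intersection points of the tensor products of the local-system fibres there --- a quantity depending on the local systems only through their dimensions. Trivialising all local systems therefore leaves \(\HF(\mr(\HFT(T_1)),\HFT(T_2))\) unchanged up to relatively bigraded isomorphism, so \(\HF(\mr(\Gamma_1),\Gamma_2)\cong\HFKhat(T_1\cup T_2)\otimes V\) and the corollary follows. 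Besides the combinatorial connectivity argument above, the one point I would be careful to extract correctly from \cite{PQMod} is that, for non-parallel curves, it is the full bigraded isomorphism type of the Floer complex --- not merely its total dimension --- that is insensitive to the choice of local system.
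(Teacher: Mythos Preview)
Your proposal is correct and follows essentially the same route as the paper. The paper's proof is more concise: it cites Corollary~\ref{cor:pairing-knots} directly to conclude that no rational component of \(\mr(\HFT(T_1))\) is parallel to any rational component of \(\HFT(T_2)\), whereas you unpack this corollary inline via Lemma~\ref{lem:HFT_detects_connectivity} and the connectivity argument; and the paper cites \cite[Theorem~4.45]{PQMod} specifically rather than the surrounding sections. Otherwise the logical structure --- locate non-trivial local systems on rational components via Theorem~\ref{thm:geography_of_HFT}, rule out parallelism using the knot hypothesis, then invoke the invariance of Floer homology under change of local system for non-parallel curves, and conclude via Theorem~\ref{thm:GlueingTheorem:HFT:HFr} --- is identical.
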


\begin{proof}
  By Theorem~\ref{thm:geography_of_HFT}, only rational components may carry non-trivial local systems. By Corollary~\ref{cor:pairing-knots}, no rational component of \(\mr\HFT(T_1)\) is parallel to any rational component of \(\HFT(T_2)\). Theorem~4.45 from \cite{PQMod} implies that the Lagrangian Floer homology between two non-parallel curves is invariant under changing local systems, provided the dimensions of the local systems stay the same. We now conclude with Theorem~\ref{thm:GlueingTheorem:HFT:HFr}. 
\end{proof}

The following result is new:

\begin{proposition}\label{prop:Odd_number_of_rationals}
  For any tangle \(T\) without closed components, the number of rational curves in \(\HFT(T)\) weighted by the dimensions of their local systems is odd. So in particular, for such tangles, there is at least one rational curve whose local system has an odd dimension. 
\end{proposition}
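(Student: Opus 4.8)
The plan is to compute a suitable mod-2 invariant on both sides of the gluing theorem, choosing the other tangle $T_2$ to be a rational tangle so that the left-hand side of Theorem~\ref{thm:GlueingTheorem:HFT:HFr} is just $\HFKhat$ of a knot, whose total dimension is always odd. Concretely, pick any rational slope $s$ that is \emph{not} realized by a rational component of $\HFT(T)$ and that connects the tangle ends of $T$ compatibly, so that $T\cup Q_s$ is a knot $L$ (the existence of such a slope follows from Lemma~\ref{lem:HFT_detects_connectivity}: $\HFT(T)$ has finitely many rational components, and among the infinitely many slopes separating the punctures in the correct way we may avoid all of them, as well as the slope forced by the special components). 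Then $\dim\HFKhat(L)$ is odd, since the total homology of $\HFKhat$ of a knot computes, via the skein/spectral-sequence argument or simply since it is the Euler-characteristic-respecting reduction, an odd-dimensional space — more precisely, $\HFKhat$ of any knot in $S^3$ has odd total dimension because its Euler characteristic is the Alexander polynomial evaluated at $1$, which equals $\pm1$.

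The key step is then to show that the right-hand side, $\dim\HFr(\mr(\HFT(T)),\HFT(Q_s))$, is congruent mod $2$ to (the number of rational curves in $\HFT(T)$ counted with local-system dimension). By Corollary~\ref{cor:pairing-knots:local_systems} we may assume all local systems are trivial, i.e. replace $\HFT(T)$ by a disjoint union of honest rational curves $\Rat(p_i/q_i)$ (the $i$-th repeated with multiplicity equal to its local-system dimension) together with special curves; by Theorem~\ref{thm:geography_of_HFT} these are the only possibilities. Now $\HFT(Q_s)$ is a single embedded rational curve $\Rat(s)$, and $\HF$ is additive over disjoint unions of curves. So $\dim\HF(\mr(\HFT(T)),\Rat(s))$ is the sum over components $\gamma$ of $\HFT(T)$ of $\dim\HF(\mr(\gamma),\Rat(s))$. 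Since $s$ is chosen generic, $\mr(\gamma)$ is never parallel to $\Rat(s)$, so each such summand equals the minimal geometric intersection number $i(\mr(\gamma),\Rat(s))$. The remaining task is a parity count of these intersection numbers: I claim $i(\mr(\Rat(p/q)),\Rat(s))$ is \emph{odd} for a rational component (two distinct straight lines through the punctures of the torus double-cover meet in an odd number of points once we account for the branch points — equivalently $Q_{-p/q}\cup Q_s$ is a knot, a single unknotted or two-bridge component, so its $\HFKhat$, hence the intersection count $\times$ the factor-of-$V$ bookkeeping, is odd), while $i(\mr(\Special_n(p/q;\TEi,\TEj)),\Rat(s))$ is \emph{even} (a special curve has both ends limiting onto the \emph{same} pair of punctures on one side, so it links a rational line an even number of times; equivalently, closing up a special tangle component with a rational tangle produces a two-component link, whose $\HFL$ has even total dimension). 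Summing, the parity of $\dim\HF(\mr(\HFT(T)),\Rat(s))$ equals the parity of the number of rational components weighted by local-system dimension. Combining with the left-hand side being odd gives the proposition, and the "in particular" follows immediately since a sum of even numbers is even, so at least one local-system dimension among the rational curves must be odd.

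The main obstacle is making the intersection-parity dichotomy rigorous, i.e. showing cleanly that $i(\mr(\Rat),\Rat)$ is odd and $i(\mr(\Special),\Rat)$ is even. I would handle this not by a direct picture-chase in $\FourPuncturedSphere$ but by lifting to the double-branched-cover torus $T^2$ (the covering $\PuncturedPlane\to S^2_4$ used in Section~\ref{sec:review:HFT:geography}): a rational curve lifts to a single straight line of slope $p/q$, on which the deck involution acts freely, so it meets another such line transversally in $|p s_{\mathrm{num}} - q\,(\cdots)|$ points downstairs, whose parity one reads off from the fact that two coprime slopes on $T^2$ have odd algebraic intersection after branch-point correction; a special curve lifts to a figure-eight-like curve invariant under the involution whose two "halves" contribute cancelling intersections, forcing evenness. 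Alternatively — and this may be the cleanest route — one avoids the geometry entirely: $i(\mr(\gamma),\Rat(s)) = \dim\HFr(\mr(\gamma),\Rat(s))$ whenever it makes sense, and for a single special component $\gamma$ one can realize $\gamma$ as (a summand of) $\HFT$ of an honest pretzel-type tangle and invoke that its closure with $Q_s$ is a genuine \emph{link} (two components, because the special curve connects the punctures "the wrong way"), so Theorem~\ref{thm:GlueingTheorem:HFT} puts an extra factor of $V$ and forces the even parity; symmetrically a rational component closes to a knot. I would structure the written proof around this second, more algebraic route, with the branched-cover picture relegated to a remark.
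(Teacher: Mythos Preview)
Your overall strategy matches the paper's: close $T$ with a rational tangle $Q_s$ to get a knot, use that $\HFKhat$ of a knot has odd total dimension, and separate the contributions of rational versus special components of $\HFT(T)$ to $\HFr(\mr\HFT(T),\Rat(s))$. The rational-component contribution is handled identically in both arguments (each gives $\HFKhat$ of a two-bridge knot, hence odd).

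The gap is in your treatment of special components. Your claim that each individual $\HFr(\mr(\Special_n),\Rat(s))$ is even may well be true (Lemma~\ref{lem:computation:mixed} confirms it for the slopes computed there), but neither of your proposed justifications works as written. The ``algebraic route'' rests on a misconception: special curves are immersed closed curves in $S^2_4$, not arcs, and carry no connectivity information about the tangle ends---Lemma~\ref{lem:HFT_detects_connectivity} is a statement about \emph{rational} components only. There is no sense in which ``the special curve connects the punctures the wrong way'' or forces a two-component closure; whether $T'\cup Q_s$ is a knot or a link is governed by the rational components of $\HFT(T')$, and even if you found a tangle $T'$ realizing a given special curve as a summand, knowing the parity of $\dim\HFL(T'\cup Q_s)$ would say nothing about the parity of that individual summand. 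The geometric route is too vague: the compactness of a single lift of $\Special_n$ in $\R^2$ shows only that its intersection with any one line is even, which yields $\dim\HF\equiv 0\pmod 2$; but you need $\dim\HF\equiv 0\pmod 4$ to conclude $\dim\HFr$ is even, and the ``two halves'' symmetry you invoke is not identified precisely enough to deliver this.

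The paper sidesteps the issue entirely via conjugation symmetry (Theorem~\ref{thm:Conjugation}): special components of $\HFT(T)$ occur in conjugate pairs $\Special_n(\tfrac{p}{q};\TEi,\TEj)$ and $\Special_n(\tfrac{p}{q};\TEk,\TEl)$, and the involution exchanging the two curves in each pair preserves $\Rat(s)$, so they contribute equally to $\HF(\Rat(s),\cdot)$. Hence the \emph{total} special contribution to $\dim\HF$ is divisible by~$4$, and to $\dim\HFr$ by~$2$, with no need to analyze any individual special-rational pairing. This is the key observation you are missing.
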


\begin{proof}
	The tangle \(T\) admits a rational closure to a knot \(K=Q_{-s}\cup T\), and the rational curve \(\Rational(s)\) corresponding to this closure is not parallel to any component of \(\HFT(T)\) by Lemma~\ref{lem:HFT_detects_connectivity}. 
  The pairing of \(\Rational(s)\) with any closed immersed curve with local system is even-dimensional. 
  By conjugation symmetry (Theorem~\ref{thm:Conjugation}), the special components of \(\HFT(T)\) appear in pairs. Moreover, \(\Rational(s)\) intersects the two curves in each such pair in the same number of points, since the involution exchanging the two special curves preserves \(\Rational(s)\).  Consequently, the number of generators that the special components of \(\HFT(T)\) contribute to \(V\otimes \HFKhat(K)\) is divisible by 4. 
  Each rational component of \(\HFT(T)\) with local system \(X_i\) of dimension \(d_i\) contributes \(\mathbb{F}^{d_i}\otimes V\otimes \HFKhat(K_i)\) for some two-bridge knot \(K_i\), possibly shifted in bigrading, by Corollaries~\ref{cor:pairing-knots} and ~\ref{cor:pairing-knots:local_systems}.   
  Now, \(\HFKhat(K_i)\) and \(\HFKhat(K)\) have odd rank, so the claim follows.
\end{proof}

\subsection{\texorpdfstring{The bigrading on \(\HFT\)}{The bigrading on HFT}}\label{sec:review:HFT:Bigrading}

Like link Floer homology, the invariant \(\HFT\) comes with a relative bigrading. 
The following definition of an absolute bigrading on multicurves is a reformulation of \cite[Definitions~4.28 and~5.1]{PQMod}; see also \cite[Definition~1.7]{PQSym}.

Let us consider the \(\delta\)-grading first. 
The \(\delta\)-grading of an immersed multicurve \(\Gamma\) is a function 
\[
\delta\co \Gen(\Gamma)\longrightarrow \tfrac{1}{2}\mathbb{Z}
\]
where \(\Gen(\Gamma)\) is the set of intersection points between the parametrization of \(\FourPuncturedSphere\) and~\(\Gamma\), assuming as usual that this intersection is minimal. The function \(\delta\) is subject to the following compatibility condition: 
Suppose \(x,x'\in \Gen(\Gamma)\) are two intersection points such that there is a path \(\psi\)  on \(\Gamma\) which connects \(x\) to \(x'\) without meeting any parametrizing arc, except at the endpoints. We distinguish three cases, which are illustrated in Figure~\ref{fig:domains:xx}. The path can turn left (a), it can go straight across (b), or it can turn right (c). Then 
\[
\delta(x')-\delta(x)=
\begin{cases*}
\tfrac{1}{2} & if the path \(\psi\) turns left,\\
0 & if the path \(\psi\) goes straight across,\\
-\tfrac{1}{2} & if the path \(\psi\) turns right.\\
\end{cases*}
\]
The \(\delta\)-grading on \(\HFT(T)\) is well-defined up to a constant.

\begin{figure}[tb]
	\begin{subfigure}{0.3\textwidth}
		\centering
		\(\xI\)
		\caption{}\label{fig:domains:xx:1}
	\end{subfigure}
	\begin{subfigure}{0.3\textwidth}
		\centering
		\(\xII\)
		\caption{}\label{fig:domains:xx:2}
	\end{subfigure}
	\begin{subfigure}{0.3\textwidth}
		\centering
		\(\xIII\)
		\caption{}\label{fig:domains:xx:3}
	\end{subfigure}
	\caption{Basic regions illustrating the definition of the bigrading on a single curve}\label{fig:domains:xx}
\end{figure}

The Alexander grading is defined similarly. 
First, we define an \textbf{ordered matching} as a partition \(\{(\TEi_1,\TEo_1),(\TEi_2,\TEo_2)\}\) of \(\{\TEI,\TEII,\TEIII,\TEIV\}\) into two ordered pairs. A four-ended tangle gives rise to such an ordered matching, according to which pairs of tangle ends are connected; the order is determined by the orientation of the two open components of the tangle: We order each pair of tangle ends such that the inward pointing end comes first, the outward pointing end second. 
Given such an ordered matching, the Alexander grading is a function
\[
\Alex\co
\Gen(\Gamma)
\longrightarrow 
\GeneralAlex
\coloneqq
\mathbb{Z}^4/(e_{\TEi_1}+e_{\TEo_1},e_{\TEi_2}+e_{\TEo_2}),
\]
where \(e_{\TEj}\) is the \(\TEj^\text{th}\) unit vector in \(\mathbb{Z}^4\),
satisfying the following compatibility condition: 
If \(\psi\co x\rightarrow x'\) is a path as in the discussion of the \(\delta\)-grading above, then 
\[
\Alex(x')-\Alex(x)=(a_{\TEI},a_{\TEII},a_{\TEIII},a_{\TEIV})\in\GeneralAlex,
\text{ where }a_{\TEj}=
\begin{cases*}
-1 & if \(\TEj\) lies to the left of \(\psi\)\\
0 & if \(\TEj\) lies to the right of \(\psi\).
\end{cases*}
\]
The ordering on the pairs in our ordered matching then determines a homomorphism
\[
\GeneralAlex\longrightarrow\tfrac{1}{2}\mathbb{Z},\quad (a_{\TEI},a_{\TEII},a_{\TEIII},a_{\TEIV})\mapsto
\tfrac{1}{2}(\varepsilon_{\TEI}a_{\TEI}+
\varepsilon_{\TEII}a_{\TEII}+
\varepsilon_{\TEIII}a_{\TEIII}+
\varepsilon_{\TEIV}a_{\TEIV}),
\]
where \(\varepsilon_{\TEi_1}=\varepsilon_{\TEi_2}=+1\) and \(\varepsilon_{\TEo_1}=\varepsilon_{\TEo_2}=-1\).
This specifies a univariate Alexander grading 
\[
A\co \Gen(\Gamma)\longrightarrow \tfrac{1}{2}\mathbb{Z}.
\]
Given a four-ended tangle \(T\), the univariate Alexander grading \(A\) on \(\HFT(T)\) is defined using the ordered matching determined by \(T\). 
The grading is well-defined up to a constant. 
It corresponds to the univariate Alexander grading on link Floer homology. 
However, for general computations, it is usually more convenient to use \(\Alex\) instead as it does not require us to choose a particular orientation in advance. 
(One can also define a multivariate Alexander grading, but we do not need it in this paper.)

The third grading, the homological grading, is defined by $A-\delta$, and corresponds to the Maslov grading. When comparing our notation to~\cite{HDsForTangles,PQMod,PQSym}, note that our Alexander grading \(\Alex\) is denoted by \(A\) in those papers, while our \(A\) is equal to \(\tfrac{1}{2}\overline{A}\).

\subsection{\texorpdfstring{The Alexander grading of curves in the covering space for \(\HFT\)}{The Alexander grading of curves in the covering space for HFT}}\label{sec:review:HFT:Alex}

The purpose of this subsection is to develop tools that enable us to better understand the Alexander grading in terms of the covering space \(\PuncturedPlane\) of the four-punctured sphere \(\FourPuncturedSphere\). 
The ideas are very similar to the ones used in~\cite{KWZthinness} to study the \(\delta\)-grading. 
Most results in this subsection are true for arbitrary curves in \(\PuncturedPlane\). Nonetheless, we will implicitly assume throughout that all curves are lifts of rational or special curves.
For notation, any symbol decorated with a tilde \(\tilde{}\) will denote the lift to \(\PuncturedPlane\) of an object in \(\FourPuncturedSphere\) represented by the same symbol.
In the following, we will treat all points in the integer lattice as marked points. 
Let us denote by \(\ParaCovering\) the union of the integer lattice points with the preimage of the parametrization of \(\FourPuncturedSphere\).

\begin{definition}
	Suppose \(\Gamma=\{\tilde{\gamma}_1,\dots,\tilde{\gamma}_n\}\) is a set of curves in \(\PuncturedPlane\) which avoid the integer lattice points such that \(\ParaCovering\cup\Gamma=\ParaCovering\cup\tilde{\gamma}_1\cup\dots\cup\tilde{\gamma}_n\) is a planar graph whose vertices have all valence four. 
	This planar graph divides the plane into polygons, which we call \textbf{regions}. A \textbf{domain} is a formal linear combination of regions. In other words, a domain is an element of \(H_2(\mathbb{R}^2,\ParaCovering\cup\Gamma)\). 
	Then, given an integer lattice point \(\tilde{\circ}\) labelled by \(\tilde{\TEi}\) and a domain \(\varphi\), define an element \(\Alex(\varphi,\tilde{\circ})\in\GeneralAlex\) by setting its \(\TEi^{\text{th}}\) component equal to the sum of the multiplicities of the domain \(\varphi\) in the four regions adjacent to \(\tilde{\circ}\) and setting all other components equal to 0.
	We then define the Alexander grading \(\Alex(\varphi)\in\GeneralAlex\) of \(\varphi\) as the sum of \(\Alex(\varphi,\tilde{\circ})\), where \(\tilde{\circ}\) ranges over all integer lattice points.
  Note that \(\Alex(\varphi,\tilde{\circ})=0\) for all but finitely many integer lattice points \(\tilde{\circ}\), so \(\Alex(\varphi)\) is well-defined. 
  Also, the Alexander grading of domains is additive in the sense that for any two domains~\(\varphi\) and~\(\psi\), \(\Alex(\varphi+\psi)=\Alex(\varphi)+\Alex(\psi)\).
\end{definition}

\begin{remark}\label{rem:conventions}
	The figures in this paper follow the same conventions as in~\cite{PQMod}: We use the right-hand rule to determine the orientation of domains, and the normal vector fields of \(\FourPuncturedSphere\) and \(\PuncturedPlane\) are pointing into the page. 
	Thus, the boundary of a region of multiplicity \(+1\) is oriented clockwise. 
	For bigons such as the one in Figure~\ref{fig:lem:Alex:bigon}, this convention means that red Lagrangians are ``on the left'' and blue Lagrangians are ``on the right''.
\end{remark}

\begin{definition}\label{def:connecting_domain:same_curve}
  Given an absolutely Alexander graded curve \(\gamma\) (in the sense of Subsection~\ref{sec:review:HFT:Bigrading}), consider two intersection points \(\tilde{x}\) and \(\tilde{x}'\) of a lift \(\tilde{\gamma}\) with the integer lattice graph \(\ParaCovering\). A \textbf{connecting domain} from \(\tilde{x}\) to \(\tilde{x}'\) is a domain \(\varphi\in H_2(\mathbb{R}^2,\ParaCovering\cup\tilde{\gamma})\) with the property 
  \[
  \partial\Big(\partial\varphi\cap \tilde{\gamma}\Big)=\tilde{x}-\tilde{x}'.
  \] 
  For readers familiar with Heegaard Floer homology, it might be helpful to think of the curve \(\tilde{\gamma}\) as playing the role of a \(\beta\)-curve and \(\ParaCovering\) playing the role of an \(\alpha\)-curve.
\end{definition}

\begin{lemma}\label{lem:Alex:HFT:Euler:one_curve}
  With notation as in Definition~\ref{def:connecting_domain:same_curve}, 
  \[
  \Alex(x')-\Alex(x)=-\Alex(\varphi). 
  \]
\end{lemma}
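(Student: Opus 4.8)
The plan is to reduce the statement to its local behavior along the curve \(\tilde\gamma\) and to match it term-by-term with the definition of the Alexander grading from Subsection~\ref{sec:review:HFT:Bigrading}. First I would use additivity of \(\Alex(\varphi)\) in the domain \(\varphi\), together with additivity of the relative grading along \(\tilde\gamma\), to reduce to the case where the connecting domain is ``elementary''. Concretely, since any connecting domain from \(\tilde x\) to \(\tilde x'\) can be written (after possibly adding multiples of the full plane, which has Alexander grading \(0\) because around each integer lattice point it contributes \(4\equiv 0\) in \(\GeneralAlex\), i.e.\ it is \((0,0,0,0)\) — wait, more carefully, the full plane does contribute, so I would instead argue by a staircase/decomposition along the arc \(\partial\varphi\cap\tilde\gamma\)) as a sum of connecting domains each of which realizes a single elementary step of the path \(\psi\co x\to x'\) of Figure~\ref{fig:domains:xx}, it suffices to verify the formula when \(\tilde x\) and \(\tilde x'\) are joined by a single segment of \(\tilde\gamma\) crossing exactly one parametrizing arc, so that \(\psi\) either turns left, goes straight, or turns right.

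Next I would treat these three cases directly from the pictures. In each case the connecting domain \(\varphi\) can be taken to be the single small region that the segment of \(\tilde\gamma\) cuts off together with the adjacent parametrizing arc; one reads off from Figure~\ref{fig:domains:xx} exactly which lattice points are adjacent to this region and with what multiplicity. When \(\psi\) goes straight across, the elementary region is not adjacent to any lattice point (it lies between two parallel parametrizing arcs), so \(\Alex(\varphi)=0\), matching \(\Alex(x')-\Alex(x)=0\). When \(\psi\) turns left, the region is adjacent to exactly the lattice points lying on the \emph{right} of \(\psi\) (equivalently, the inside of the turn, using the clockwise boundary-orientation convention of Remark~\ref{rem:conventions}), each contributing \(+1\) to the corresponding coordinate of \(\Alex(\varphi)\); comparing with the rule \(a_{\TEj}=-1\) if \(\TEj\) is to the left and \(0\) if to the right, one gets \(\Alex(x')-\Alex(x)=-\Alex(\varphi)\). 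The right-turn case is the mirror of the left-turn case and the sign works out identically. Here the bookkeeping of ``left versus right of \(\psi\)'' against ``inside versus outside of the clockwise-oriented region'' is the point where the normal-vector and right-hand-rule conventions of Remark~\ref{rem:conventions} must be invoked carefully.

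The main obstacle I anticipate is precisely the reduction step: a general connecting domain need not decompose as a sum of elementary ones \emph{within} \(H_2(\mathbb{R}^2,\ParaCovering\cup\tilde\gamma)\) without first understanding the indeterminacy. The clean way to handle this is to observe that a connecting domain from \(\tilde x\) to \(\tilde x'\) is unique up to adding a class in \(H_2(\mathbb{R}^2,\ParaCovering)\) (domains with boundary disjoint from \(\tilde\gamma\)); such a class is a sum of the bounded complementary regions of \(\ParaCovering\) alone, i.e.\ the unit squares of the lattice, and each unit square contributes \((0,0,0,0)\in\GeneralAlex\) to \(\Alex\) since its four corners are distinct lattice points each receiving multiplicity \(+1\) in one coordinate, and the relation defining \(\GeneralAlex\) kills the sum in pairs along each matched pair of punctures — so in fact it suffices to know \(\Alex\) of a unit square vanishes modulo the matching relations, which is immediate. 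Once the indeterminacy is seen to be grading-irrelevant, one fixes any convenient connecting domain by thickening a neighborhood of the arc \(\partial\varphi\cap\tilde\gamma\subset\tilde\gamma\) from \(\tilde x'\) to \(\tilde x\), decomposes that neighborhood region-by-region along the elementary steps of \(\psi\), and sums the three local computations above. This gives \(\Alex(x')-\Alex(x)=-\Alex(\varphi)\) for the chosen \(\varphi\), and hence for all connecting domains by the indeterminacy argument.
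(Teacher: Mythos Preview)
Your approach is essentially the paper's: verify the identity on the basic regions of Figure~\ref{fig:domains:xx} directly from the definition of \(\Alex\), then note that any connecting domain differs from such an elementary one by a domain with boundary entirely in \(\ParaCovering\), hence a sum of unit squares, each of which has \(\Alex=e_{\TEI}+e_{\TEII}+e_{\TEIII}+e_{\TEIV}=0\in\GeneralAlex\). Two small corrections: your description of case~(b) is wrong---the ``straight across'' region \emph{is} adjacent to lattice points and \(\Alex(x')-\Alex(x)\) is not zero there (one puncture lies to the left of \(\psi\), contributing a \(-1\)); the identity still holds but must be read off the picture just as in the turning cases. Also, the paper explicitly uses the standing hypothesis that \(\gamma\) is rational or special to guarantee \(\partial\varphi\cap\tilde\gamma\) is a genuine path (no closed cycles), which justifies the decomposition into elementary steps you rely on.
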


\begin{proof}
	Figure~\ref{fig:domains:xx} shows the images of domains consisting of just a single region under the covering map \(\PuncturedPlane\rightarrow\FourPuncturedSphere\). For these domains, the lemma follows directly from the definition of the Alexander grading. So let us consider a general connecting domain \(\varphi\) from \(\tilde{x}\) to \(\tilde{x}'\). 
  By hypothesis, \(\partial\varphi\cap\tilde{\gamma}\) is a 1-chain connecting \(\tilde{x}'\) to \(\tilde{x}\). 
  Since \(\gamma\) is assumed to be rational or special, there are no cycles in this 1-chain, so it is simply a path from \(\tilde{x}'\) to \(\tilde{x}\). 
  Then, this path can be written as the intersection of \(\tilde{\gamma}\) with the boundary of \(\varphi'\), a connecting domain which is a sum of finitely many of the basic regions from Figure~\ref{fig:domains:xx}.
  The difference \(\varphi-\varphi'\) is a domain whose boundary lies entirely in \(\ParaCovering\), so it consists entirely of square regions, whose Alexander gradings vanish. 
\end{proof}

\begin{definition}\label{def:Alex:HFT:Euler:two_curves}
  Let \(\bullet\in\HF(\gamma,\gamma')\) be an intersection point between two  absolutely Alexander graded curves \(\gamma\) and \(\gamma'\). 
  Consider two lifts \(\tilde{\gamma}\) and \(\tilde{\gamma}'\) of these curves such that they intersect at a lift \(\tilde{\bullet}\) of the intersection point~\(\bullet\). A \textbf{connecting domain} for \(\tilde{\bullet}\) from \(\tilde{\gamma}\) to \(\tilde{\gamma}'\) is a domain \(\varphi\in H_2(\mathbb{R}^2,\ParaCovering\cup\tilde{\gamma}\cup \tilde{\gamma}')\) with the property 
  \[
  \partial\Big(\partial\varphi\cap \tilde{\gamma}\Big)=\tilde{x}-\tilde{\bullet}
  \quad
  \text{and}
  \quad
  \partial\Big(\partial\varphi\cap \tilde{\gamma}'\Big)=\tilde{\bullet}-\tilde{y}
  \quad
  \text{for some \(\tilde{x}\in\tilde{\gamma}\cap \ParaCovering\) and \(\tilde{y}\in\tilde{\gamma}'\cap \ParaCovering\).}
  \] 
\end{definition}

Given two bigraded curves \(\gamma\) and \(\gamma'\), one can define a bigrading on \(\HF(\gamma,\gamma')\). 
The reader unfamiliar with~\cite{PQMod} may take the following result as a definition for the Alexander grading on \(\HF(\gamma,\gamma')\).

\begin{lemma}\label{lem:Alex:HFT:Euler:two_curves}
	With notation as in Definition~\ref{def:Alex:HFT:Euler:two_curves}, 
  the Alexander grading of \(\bullet\) is equal to 
  \[
  \Alex(y)-\Alex(x)+\Alex(\varphi).
  \]
\end{lemma}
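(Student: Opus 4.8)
The plan is to reduce Lemma~\ref{lem:Alex:HFT:Euler:two_curves} to the single-curve statement of Lemma~\ref{lem:Alex:HFT:Euler:one_curve} by splitting a connecting domain for \(\tilde\bullet\) into two pieces, one ``belonging to \(\tilde\gamma\)'' and one ``belonging to \(\tilde\gamma'\).'' Concretely, given a connecting domain \(\varphi\) for \(\tilde\bullet\) from \(\tilde\gamma\) to \(\tilde\gamma'\), with the endpoints \(\tilde x\in\tilde\gamma\cap\ParaCovering\) and \(\tilde y\in\tilde\gamma'\cap\ParaCovering\) as in Definition~\ref{def:Alex:HFT:Euler:two_curves}, I first recall that the Alexander grading of \(\bullet\) is defined in~\cite{PQMod} (via the bigrading on \(\HF(\gamma,\gamma')\)) so that a bigon contributing to the differential, such as the one in Figure~\ref{fig:lem:Alex:bigon}, shifts the Alexander grading by the Alexander grading of the bigon domain. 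Equivalently — and this is the reformulation I want to record — the grading of \(\bullet\) relative to the lattice is pinned down by any connecting domain, and the content of the lemma is that the dependence on the two curves decouples additively.

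The key steps, in order: (1) Fix a lift \(\tilde\bullet\) and a connecting domain \(\varphi\); by the argument in the proof of Lemma~\ref{lem:Alex:HFT:Euler:one_curve}, the hypotheses on \(\partial(\partial\varphi\cap\tilde\gamma)\) and \(\partial(\partial\varphi\cap\tilde\gamma')\) force \(\partial\varphi\cap\tilde\gamma\) to be an embedded path \(\tilde x\to\tilde\bullet\) and \(\partial\varphi\cap\tilde\gamma'\) an embedded path \(\tilde\bullet\to\tilde y\), since rational and special curves carry no closed sub-1-chains of this type. (2) Replace \(\varphi\) by a standard model \(\varphi'\) which is a sum of the basic regions of Figure~\ref{fig:domains:xx} together with a single ``corner'' region at \(\tilde\bullet\); the difference \(\varphi-\varphi'\) then has boundary entirely in \(\ParaCovering\), hence consists of square regions with vanishing Alexander grading, so \(\Alex(\varphi)=\Alex(\varphi')\). (3) Decompose \(\varphi'=\varphi'_\gamma+\varphi'_{\gamma'}\) along the path through \(\tilde\bullet\), where \(\varphi'_\gamma\) is a connecting domain on \(\tilde\gamma\) alone from \(\tilde x\) to (a copy of) \(\tilde\bullet\) and \(\varphi'_{\gamma'}\) is a connecting domain on \(\tilde\gamma'\) from \(\tilde\bullet\) to \(\tilde y\). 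Apply Lemma~\ref{lem:Alex:HFT:Euler:one_curve} to each: \(\Alex(\tilde\bullet)-\Alex(\tilde x)=-\Alex(\varphi'_\gamma)\) along \(\tilde\gamma\), and \(\Alex(\tilde y)-\Alex(\tilde\bullet)=-\Alex(\varphi'_{\gamma'})\) along \(\tilde\gamma'\). (4) Add, using additivity of \(\Alex\) on domains, to get \(\Alex(\tilde y)-\Alex(\tilde x)=-\Alex(\varphi')=-\Alex(\varphi)\), i.e. \(\Alex(\bullet)=\Alex(y)-\Alex(x)+\Alex(\varphi)\) — wait, I must be careful with the sign: the \(\bullet\) here plays the role of the ``target'' for \(\tilde\gamma\) and the ``source'' for \(\tilde\gamma'\), so the two applications of Lemma~\ref{lem:Alex:HFT:Euler:one_curve} combine to give \(\Alex(\bullet)=\Alex(y)-\Alex(x)+\Alex(\varphi)\) once the intersection-point grading is normalized by the convention that \(x\) and \(y\) are taken to have the lattice-induced grading; tracking this normalization carefully against Definitions~4.28 and~5.1 of~\cite{PQMod} is exactly step (5).

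The main obstacle I expect is step (5): matching the sign and normalization conventions. The quantity \(\Alex(\bullet)\) is an absolute grading on \(\HF(\gamma,\gamma')\) coming from the identification with link Floer homology, and one must check that the ``corner contribution'' at \(\tilde\bullet\) — the local behaviour of the two curves meeting at the intersection point, which depends on whether \(\tilde\bullet\) is a left/straight/right configuration analogous to Figure~\ref{fig:domains:xx} — is already accounted for, so that no extra term appears beyond \(\Alex(y)-\Alex(x)+\Alex(\varphi)\). I would dispatch this by checking it on a single explicit bigon (e.g. Figure~\ref{fig:lem:Alex:bigon}), where the left-hand side is computed directly from the differential on \(\HF(\gamma,\gamma')\) and the right-hand side from the bigon domain; since both sides are manifestly additive under stacking domains and gluing, agreement in one case forces agreement in general. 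Everything else is the same ``throw away square regions'' bookkeeping already used in Lemma~\ref{lem:Alex:HFT:Euler:one_curve}.
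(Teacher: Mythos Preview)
Your overall strategy---reduce to a local picture near \(\tilde\bullet\) and then propagate along each curve using Lemma~\ref{lem:Alex:HFT:Euler:one_curve}---is the same as the paper's, but step~(3) has a genuine gap. The function \(\Alex\) on a single curve is only defined on \(\Gen(\gamma)\), i.e.\ on intersection points of \(\tilde\gamma\) with the parametrization \(\ParaCovering\). The point \(\tilde\bullet\) is an intersection of \(\tilde\gamma\) with \(\tilde\gamma'\), not with \(\ParaCovering\), so the expressions ``\(\Alex(\tilde\bullet)-\Alex(\tilde x)=-\Alex(\varphi'_\gamma)\) along \(\tilde\gamma\)'' and the analogous one along \(\tilde\gamma'\) are not instances of Lemma~\ref{lem:Alex:HFT:Euler:one_curve}; that lemma simply does not speak about \(\tilde\bullet\). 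Worse, if you formally add your two equations, the mysterious quantity \(\Alex(\tilde\bullet)\) cancels and you are left with \(\Alex(y)-\Alex(x)=-\Alex(\varphi)\), which says nothing about \(\Alex(\bullet)\). Your step~(4) notices something is off (``wait, I must be careful with the sign''), but the problem is not a sign: the decomposition at \(\tilde\bullet\) has lost the information you need.

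The paper fixes exactly this by never splitting at \(\tilde\bullet\). Instead, it first isolates a single basic ``corner'' region \(\varphi'\) near \(\tilde\bullet\) (one of the four types in Figure~\ref{fig:domains:xy}) whose \(\ParaCovering\)-endpoints are genuine lattice intersections \(\tilde x'\in\tilde\gamma\cap\ParaCovering\) and \(\tilde y'\in\tilde\gamma'\cap\ParaCovering\); for this elementary \(\varphi'\), the identity \(\Alex(\bullet)=\Alex(y')-\Alex(x')+\Alex(\varphi')\) is verified directly from the definition of the bigrading on \(\HF(\gamma,\gamma')\) in~\cite{PQMod}. Only then does one apply Lemma~\ref{lem:Alex:HFT:Euler:one_curve} to connecting domains \(\varphi_x\) (from \(\tilde x\) to \(\tilde x'\) on \(\tilde\gamma\)) and \(\varphi_y\) (from \(\tilde y'\) to \(\tilde y\) on \(\tilde\gamma'\)), both of which run between legitimate points of \(\Gen\). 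Summing and observing that \(\varphi-(\varphi_x+\varphi'+\varphi_y)\) is a sum of square regions gives the result. Your step~(5) check on an explicit bigon is morally this base case, but it needs to come first and be stated for the corner regions \(\varphi'\) connecting \(\tilde x'\) to \(\tilde y'\), not as an after-the-fact normalization.
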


\begin{figure}[t]
  \centering
  \begin{subfigure}{0.24\textwidth}
    \centering
    \(\xyI\)
    \caption{}\label{fig:domains:xy:1}
  \end{subfigure}
  \begin{subfigure}{0.24\textwidth}
    \centering
    \(\xyII\)
    \caption{}\label{fig:domains:xy:2}
  \end{subfigure}
  \begin{subfigure}{0.24\textwidth}
    \centering
    \(\xyIII\)
    \caption{}\label{fig:domains:xy:3}
  \end{subfigure}
  \begin{subfigure}{0.24\textwidth}
    \centering
    \(\xyIV\)
    \caption{}\label{fig:domains:xy:4}
  \end{subfigure}
  \caption{Basic connecting domains illustrating the first part of the proof of Lemma~\ref{lem:Alex:HFT:Euler:two_curves}}\label{fig:domains:xy}
\end{figure}

\begin{observation}\label{obs:Alex:reverse_order}
  The domain \(-\varphi\) is a connecting domain for a lift of the intersection point \(\bullet\) regarded as a generator of \(\HF(\gamma',\gamma)\). Therefore:
  \[
  \Alex(\bullet\in\HF(\gamma,\gamma'))
  =
  -\Alex(\bullet\in\HF(\gamma',\gamma)).
  \]
\end{observation}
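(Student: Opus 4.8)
The plan is to deduce Observation~\ref{obs:Alex:reverse_order} directly from Lemma~\ref{lem:Alex:HFT:Euler:two_curves} by swapping the roles of the two curves and using that the Alexander grading on domains is additive (hence odd under negation). First I would fix an intersection point $\bullet\in\HF(\gamma,\gamma')$ together with a lift $\tilde\bullet$ and a connecting domain $\varphi\in H_2(\mathbb{R}^2,\ParaCovering\cup\tilde\gamma\cup\tilde\gamma')$ for $\tilde\bullet$ from $\tilde\gamma$ to $\tilde\gamma'$, say with $\partial(\partial\varphi\cap\tilde\gamma)=\tilde x-\tilde\bullet$ and $\partial(\partial\varphi\cap\tilde\gamma')=\tilde\bullet-\tilde y$ for some $\tilde x\in\tilde\gamma\cap\ParaCovering$ and $\tilde y\in\tilde\gamma'\cap\ParaCovering$. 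By Lemma~\ref{lem:Alex:HFT:Euler:two_curves} this gives $\Alex(\bullet\in\HF(\gamma,\gamma'))=\Alex(y)-\Alex(x)+\Alex(\varphi)$.

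Next I would check that $-\varphi$ is a connecting domain for the same lift $\tilde\bullet$ now regarded as a generator of $\HF(\gamma',\gamma)$, i.e.\ with the roles of $\tilde\gamma$ and $\tilde\gamma'$ interchanged. Indeed, $\partial(\partial(-\varphi)\cap\tilde\gamma')=-(\tilde\bullet-\tilde y)=\tilde y-\tilde\bullet$ and $\partial(\partial(-\varphi)\cap\tilde\gamma)=-(\tilde x-\tilde\bullet)=\tilde\bullet-\tilde x$, which is exactly the defining property in Definition~\ref{def:Alex:HFT:Euler:two_curves} with the two curves swapped (and $\tilde y$ now playing the role of the starting point on the first curve, $\tilde x$ the endpoint on the second). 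Applying Lemma~\ref{lem:Alex:HFT:Euler:two_curves} to this data yields
\[
\Alex(\bullet\in\HF(\gamma',\gamma))=\Alex(x)-\Alex(y)+\Alex(-\varphi).
\]
Finally, using additivity of $\Alex$ on domains we have $\Alex(-\varphi)=-\Alex(\varphi)$, so the right-hand side equals $-(\Alex(y)-\Alex(x)+\Alex(\varphi))=-\Alex(\bullet\in\HF(\gamma,\gamma'))$, which is the claimed identity.

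The only subtle point — and the one I would be most careful about — is the bookkeeping of which lattice intersection point plays the role of ``$\tilde x$'' versus ``$\tilde y$'' after the swap, and checking that the sign conventions for $\partial\varphi$ (the right-hand rule, so that multiplicity-$+1$ regions have clockwise-oriented boundary, as noted in Remark~\ref{rem:conventions}) are consistent when passing from $\varphi$ to $-\varphi$; since negating a domain reverses all boundary orientations, the two boundary conditions in Definition~\ref{def:Alex:HFT:Euler:two_curves} simply exchange their roles, which is precisely what is needed. No existence question arises because $\varphi$ was already assumed to exist, so this is essentially a formal manipulation once the definitions are unwound.
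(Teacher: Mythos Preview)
Your argument is correct and is exactly the approach the paper takes: the observation is stated essentially without proof, the entire content being that \(-\varphi\) satisfies Definition~\ref{def:Alex:HFT:Euler:two_curves} with the roles of the curves swapped, after which Lemma~\ref{lem:Alex:HFT:Euler:two_curves} and additivity of \(\Alex\) immediately give the sign change. You have simply unwound this one-line remark in full detail.
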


\begin{proof}[Proof of Lemma~\ref{lem:Alex:HFT:Euler:two_curves}]
  If the domain \(\varphi\) consists of a single region of multiplicity 1, then up to rotation, there are only four cases, namely those shown in Figure~\ref{fig:domains:xy}. The lemma then follows directly from \cite[Definition~5.1]{PQMod}, since in each of those cases, the intersection point corresponds to some algebra element \(a\in\Ad\) and its Alexander grading \(\Alex(a)\) is equal to \(\Alex(\varphi)\) \cite[Definition~1.7]{PQSym}. 
  Let us now consider a general connecting domain \(\varphi\). Then near \(\tilde{\bullet}\), \(\varphi\) looks like one of the basic connecting domains \(\varphi'\) that we have just considered (up to adding multiples of square regions or basic regions from Figure~\ref{fig:domains:xx}). Suppose \(\varphi'\) connects \(\tilde{x}'\in\tilde{\gamma}\cap \ParaCovering\) to \(\tilde{y}'\in\tilde{\gamma}'\cap \ParaCovering\). Then, as we have just verified,
  \[
  \Alex(\bullet\in\HF(\gamma,\gamma'))
  =
  \Alex(y')-\Alex(x')+\Alex(\varphi').
  \]
  Let \(\varphi_x\) and \(\varphi_y\) be connecting domains from \(\tilde{x}\) to \(\tilde{x}'\) and from \(\tilde{y}'\) to \(\tilde{y}\), respectively. Then, by Lemma~\ref{lem:Alex:HFT:Euler:one_curve}, 
  \[
  \Alex(x')-\Alex(x)=-\Alex(\varphi_x)
  \quad\text{and}\quad
  \Alex(y)-\Alex(y')=-\Alex(\varphi_y).
  \]
  Combining all three relations, we see that 
  \[
  \Alex(\bullet\in\HF(\gamma,\gamma'))
  =
  \Alex(y)-\Alex(x)+\Alex(\varphi_x+\varphi'+\varphi_y).
  \]
  By construction, the difference between \(\varphi_x+\varphi'+\varphi_y\) and \(\varphi\) is a sum of square regions, so their Alexander gradings coincide. 
\end{proof}

\begin{definition}\label{def:true_domain}
  Let \(n\in\mathbb{N}\) with \(n>1\). 
  Suppose for \(i=1,\dots,n\), \(\tilde{\gamma}_i\) is some curve in \(\PuncturedPlane\) and \(x_i\in\HF(\gamma_{i},\gamma_{i+1})\) is an intersection point between \(\gamma_{i}\) and \(\gamma_{i+1}\), where we take indices modulo \(n\). 
  A \textbf{domain} for tuples \((\tilde{\gamma}_i)_{i=1,\dots,n}\) and \((\tilde{x}_i)_{i=1,\dots,n}\) is a domain \(\varphi\) which satisfies
  \[
  \partial\Big(\partial\varphi\cap \tilde{\gamma}_{i}\Big)=\tilde{x}_{i-1}-\tilde{x}_i,
  \] 
  where, again, indices are taken modulo \(n\).
\end{definition}

The square from Figure~\ref{fig:lem:Alex:true_domain} shows an example of such a domain for \(n=4\).

\begin{proposition}\label{prop:Alex:HFT:Euler:multiple-curves}
  With notation as in Definition~\ref{def:true_domain}, suppose the curves \(\gamma_i\) carry an absolute Alexander grading. Then, 
  \[
  \sum_{i=1}^n \Alex(x_i)=\Alex(\varphi).
  \]
\end{proposition}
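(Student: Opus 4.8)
The plan is to reduce Proposition~\ref{prop:Alex:HFT:Euler:multiple-curves} to the two-curve case already handled in Lemma~\ref{lem:Alex:HFT:Euler:two_curves} by ``cutting open'' the cyclic domain. First I would pick, for each pair of consecutive curves, an auxiliary intersection point $\tilde{z}_i \in \tilde{\gamma}_i \cap \ParaCovering$ and decompose the given cyclic domain $\varphi$ as a sum $\varphi = \sum_{i=1}^n \varphi_i + \chi$, where each $\varphi_i$ is a connecting domain (in the sense of Definition~\ref{def:Alex:HFT:Euler:two_curves}) for $\tilde{x}_i \in \HF(\gamma_i,\gamma_{i+1})$ running from some point of $\tilde{\gamma}_i \cap \ParaCovering$ to some point of $\tilde{\gamma}_{i+1}\cap\ParaCovering$, and $\chi$ is a ``correction'' domain whose boundary lies entirely on $\ParaCovering$ together with paths on the $\tilde{\gamma}_i$ joining the auxiliary points $\tilde{z}_i$ to the points actually used by the $\varphi_i$. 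The point of the cyclic boundary condition $\partial(\partial\varphi\cap\tilde{\gamma}_i) = \tilde{x}_{i-1}-\tilde{x}_i$ is exactly that the ``loose ends'' on each $\tilde{\gamma}_i$ can be closed up: the piece of $\partial\varphi$ lying on $\tilde{\gamma}_i$ is a 1-chain with boundary $\tilde{x}_{i-1}-\tilde{x}_i$, so after choosing where $\varphi_{i-1}$ ends and $\varphi_i$ begins on $\tilde{\gamma}_i$, the leftover is a path on $\tilde{\gamma}_i$ between lattice points, contributing only square regions (whose Alexander grading vanishes, as used repeatedly in the proof of Lemma~\ref{lem:Alex:HFT:Euler:two_curves}) to $\chi$.

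Next I would apply Lemma~\ref{lem:Alex:HFT:Euler:two_curves} to each $\varphi_i$ individually: writing $\varphi_i$ as a connecting domain from $\tilde{\gamma}_i$ to $\tilde{\gamma}_{i+1}$ running from $\tilde{a}_i$ to $\tilde{b}_{i+1}$, we get
\[
\Alex(x_i) = \Alex(b_{i+1}) - \Alex(a_i) + \Alex(\varphi_i).
\]
Summing over $i=1,\dots,n$ (indices mod $n$), the $\ParaCovering$-endpoint terms $\Alex(b_{i+1})$ and $\Alex(a_i)$ should telescope: the leftover path on each $\tilde{\gamma}_i$ joining $\tilde{b}_i$ to $\tilde{a}_i$ is a connecting domain in the one-curve sense, so by Lemma~\ref{lem:Alex:HFT:Euler:one_curve} its Alexander grading accounts for $\Alex(b_i)-\Alex(a_i)$, and these leftover pieces are precisely the on-curve part of $\chi$. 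Combining, $\sum_i \Alex(x_i) = \sum_i \Alex(\varphi_i) + \Alex(\chi) = \Alex(\varphi)$ by additivity of the Alexander grading on domains, since $\chi$ differs from the telescoping contributions only by square regions.

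The main obstacle I anticipate is purely bookkeeping: making the cut-open decomposition $\varphi = \sum \varphi_i + \chi$ honest as an identity in $H_2(\mathbb{R}^2, \ParaCovering \cup \tilde{\gamma}_1 \cup \dots \cup \tilde{\gamma}_n)$ — in particular checking that the $\varphi_i$ can be chosen so that their boundaries, together with the leftover on-curve paths and square regions, really reassemble $\varphi$ with the correct multiplicities, and that no spurious lattice-point contributions are introduced. A clean way to organize this, which I would adopt, is to argue as in the proof of Lemma~\ref{lem:Alex:HFT:Euler:two_curves}: localize near each $\tilde{x}_i$ to reduce $\varphi$ to a sum of the basic connecting domains of Figure~\ref{fig:domains:xy} plus basic regions of Figure~\ref{fig:domains:xx} plus squares, verify the formula directly on a single basic configuration (where it is just the $n=2$ statement applied once and $\Alex$ of squares vanishing), and then extend by additivity. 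I do not expect to need any new geometric input beyond what is already established for $n=2$; the proposition is essentially the ``associativity/cyclicity'' packaging of Lemmas~\ref{lem:Alex:HFT:Euler:one_curve} and~\ref{lem:Alex:HFT:Euler:two_curves}.
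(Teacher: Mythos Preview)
Your approach is correct and essentially the same as the paper's: decompose \(\varphi\) into connecting domains \(\varphi_i\) for each \(\tilde{x}_i\), apply Lemma~\ref{lem:Alex:HFT:Euler:two_curves} to each, and telescope. The paper sidesteps your correction term \(\chi\) by choosing a \emph{single} auxiliary point \(\tilde{y}_i\in\tilde{\gamma}_i\cap\ParaCovering\) on each curve to serve simultaneously as the terminal point of \(\varphi_{i-1}\) and the initial point of \(\varphi_i\); then \(\sum_i(\Alex(y_{i+1})-\Alex(y_i))=0\) on the nose and the bookkeeping obstacle you anticipate never arises.
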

\begin{proof}
  For each \(i=1,\dots, n\), choose some intersection point \(\tilde{y}_i\) of \(\tilde{\gamma}_i\) with \(P\). Then we can write \(\varphi\) as a sum of \(n\) connecting domains \(\varphi_i\) for \(\tilde{x}_i\) from \(\tilde{y}_i\) to \(\tilde{y}_{i+1}\). By Lemma~\ref{lem:Alex:HFT:Euler:two_curves}, 
  \[
  \Alex(x_i)=\Alex(y_{i+1})-\Alex(y_i)+\Alex(\varphi_i)
  \]
  for \(i=1,\dots,n\). Taking the sum over all \(n\) equations, we obtain the desired identity.
\end{proof}

\begin{figure}[bt]
	\begin{subfigure}{0.3\textwidth}
		\centering
		\(\bigonlift\)
		\caption{}\label{fig:lem:Alex:true_domain}
	\end{subfigure}
	\begin{subfigure}{0.3\textwidth}
		\centering
		\(\bigon\)
		\caption{}\label{fig:lem:Alex:bigon}
	\end{subfigure}
	\begin{subfigure}{0.3\textwidth}
		\centering
		\(\AlexLemma\)
		\caption{}\label{fig:lem:Alex:ordering}
	\end{subfigure}
	\caption{%
		(a) A once-punctured quadrilateral in \(\PuncturedPlane\) illustrating Definition~\ref{def:true_domain}, (b)
		a once-punctured bigon connecting intersection points \(x,y\in\HF(\textcolor{red}{\gamma},\textcolor{blue}{\gamma'})\) between two curves \(\textcolor{red}{\gamma}\) and \(\textcolor{blue}{\gamma'}\) in \(\FourPuncturedSphere\) as in Lemma~\ref{lem:Alex:bigon}, 
		and (c) an illustration of the proof of Lemma~\ref{lem:Alex:ordering}
	}
\end{figure}

\begin{lemma}\label{lem:Alex:bigon}
	Suppose two curves \(\gamma\) and \(\gamma'\) on \(\FourPuncturedSphere\) intersect in \(x,y\in\HF(\gamma,\gamma')\) such that there is a bigon covering a tangle end \(\TEi\) as shown in Figure~\ref{fig:lem:Alex:bigon}. Then, with notation as in Subsection~\ref{sec:review:HFT:Bigrading}, \(\Alex(y)-\Alex(x)=-2e_{\TEi}\in\GeneralAlex\), so  \(A(y)-A(x)=-\varepsilon_{\TEi}\). 
\end{lemma}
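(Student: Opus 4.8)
The plan is to lift the bigon $B$ of Figure~\ref{fig:lem:Alex:bigon} to the covering space $\PuncturedPlane$ and to apply the domain-counting results of this subsection, principally Proposition~\ref{prop:Alex:HFT:Euler:multiple-curves}. The crucial observation is that $\TEi$, like every puncture of $\FourPuncturedSphere$, is one of the four branch points of the double branched covering, so $\PuncturedPlane\to\FourPuncturedSphere$ restricts near any preimage $\tilde{\TEi}$ of $\TEi$ to the local model $z\mapsto z^2$. Hence a connected lift $\tilde{B}$ of the once-punctured disk $B$ is a $2$-fold branched cover of $B$, branched over the single point $\tilde{\TEi}$: it is again a once-punctured disk, but now with \emph{four} corners rather than two --- a once-punctured quadrilateral, precisely as in Figure~\ref{fig:lem:Alex:true_domain}. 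Its boundary consists of four arcs lying alternately on a lift $\tilde{\gamma}$ of $\gamma$ and a lift $\tilde{\gamma}'$ of $\gamma'$; its four corners are, read cyclically, alternating lifts of $x$ and of $y$; and $\tilde{\TEi}$ is the only integer lattice point around which $\tilde{B}$ has positive multiplicity, the standard local configuration of Figure~\ref{fig:lem:Alex:true_domain}.

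First I would record that $\tilde{B}$ is a domain in the sense of Definition~\ref{def:true_domain} for the cyclic $4$-tuple of curves $(\gamma,\gamma',\gamma,\gamma')$ together with the $4$-tuple of its corners read off cyclically. Proposition~\ref{prop:Alex:HFT:Euler:multiple-curves} then equates $\Alex(\tilde{B})$ with the sum of the Alexander gradings of these four corners. Two of the four corners are generators of $\HF(\gamma',\gamma)$ rather than $\HF(\gamma,\gamma')$; rewriting their gradings via Observation~\ref{obs:Alex:reverse_order}, and using that the Alexander grading of an intersection point is independent of the chosen lift, the sum collapses to $\pm\,2\big(\Alex(x)-\Alex(y)\big)$, where $\Alex(x)$ and $\Alex(y)$ are now taken in $\HF(\gamma,\gamma')$ and the overall sign is dictated by the orientation conventions of Remark~\ref{rem:conventions}.

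Next I would evaluate $\Alex(\tilde{B})$ from the local picture near $\tilde{\TEi}$. Since $B$ contains $\TEi$ in its interior with multiplicity one, $\tilde{B}$ contains an entire punctured neighbourhood of $\tilde{\TEi}$ with multiplicity one; the four arcs of the lifted parametrisation meeting at $\tilde{\TEi}$ cut this neighbourhood into exactly the four regions adjacent to $\tilde{\TEi}$, each occurring in $\tilde{B}$ with multiplicity one. As $\tilde{\TEi}$ is the only lattice point around which $\tilde{B}$ has positive multiplicity, $\Alex(\tilde{B})=\Alex(\tilde{B},\tilde{\TEi})=4e_{\TEi}$. Combining this with the previous paragraph --- and checking that the sign works out --- gives $\Alex(y)-\Alex(x)=-2e_{\TEi}$, and feeding this into the homomorphism $\GeneralAlex\to\tfrac{1}{2}\mathbb{Z}$ from Subsection~\ref{sec:review:HFT:Bigrading} yields $A(y)-A(x)=\tfrac{1}{2}(-2)\varepsilon_{\TEi}=-\varepsilon_{\TEi}$, as required.

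The real work --- and the only place I expect any difficulty --- lies in the first two paragraphs: recognising that a once-punctured bigon downstairs lifts to a \emph{quadrilateral} upstairs, which is exactly where the factor of $2$ in the statement originates, and then tracking orientations carefully enough (via the conventions of Remark~\ref{rem:conventions}) to be certain of the sign with which each corner's grading enters, and hence of the sign in the final relation between $\Alex(x)$ and $\Alex(y)$. The evaluation $\Alex(\tilde{B})=4e_{\TEi}$ is a purely local computation, and the passage to the univariate Alexander grading $A$ is immediate from the definitions.
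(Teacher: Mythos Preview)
Your proof is correct and follows essentially the same route as the paper's own argument: lift the once-punctured bigon to a once-punctured quadrilateral in $\PuncturedPlane$, apply Proposition~\ref{prop:Alex:HFT:Euler:multiple-curves} together with Observation~\ref{obs:Alex:reverse_order}, and read off $\Alex(\tilde{B})=4e_{\TEi}$. One small imprecision worth fixing: the four boundary arcs of the lifted quadrilateral lie on \emph{two distinct} lifts of $\gamma$ and two distinct lifts of $\gamma'$ (related by the $180^\circ$ rotation about $\tilde{\TEi}$), not on a single lift of each as your wording suggests---compare the paper's labelling $\tilde{\gamma}_1,\tilde{\gamma}_3$ for lifts of $\gamma$ and $\tilde{\gamma}_2,\tilde{\gamma}_4$ for lifts of $\gamma'$---but this does not affect the applicability of Proposition~\ref{prop:Alex:HFT:Euler:multiple-curves}.
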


\begin{proof}
	Consider the branched double cover of the bigon in \(\PuncturedPlane\), which is illustrated in Figure~\ref{fig:lem:Alex:true_domain}. In this picture, the curves 
	\(\textcolor{red}{\tilde{\gamma}_1}\) and 
	\(\textcolor{red}{\tilde{\gamma}_3}\) are lifts of 
	\(\textcolor{red}{\gamma}\), 
	and 
	\(\textcolor{blue}{\tilde{\gamma}_2}\) and 
	\(\textcolor{blue}{\tilde{\gamma}_4}\) are lifts of 
	\(\textcolor{blue}{\gamma'}\). 
	Moreover, 
	\(\tilde{x}_1\in\textcolor{red}{\tilde{\gamma}_1}\cap\textcolor{blue}{\tilde{\gamma}_2}\)
	and 
	\(\tilde{x}_3\in\textcolor{red}{\tilde{\gamma}_3}\cap\textcolor{blue}{\tilde{\gamma}_4}\)
	are lifts of 
	\(x\in\HF(\textcolor{red}{\gamma},\textcolor{blue}{\gamma'})\), 
	and 
	\(\tilde{x}_2\in\textcolor{red}{\tilde{\gamma}_3}\cap\textcolor{blue}{\tilde{\gamma}_2}\)
	and 
	\(\tilde{x}_4\in\textcolor{red}{\tilde{\gamma}_1}\cap\textcolor{blue}{\tilde{\gamma}_4}\)
	are lifts of 
	\(y\in\HF(\textcolor{red}{\gamma},\textcolor{blue}{\gamma'})\).
	Then, by Proposition~\ref{prop:Alex:HFT:Euler:multiple-curves} and Observation~\ref{obs:Alex:reverse_order}, 
	\begin{equation*}
		4e_{\TEi}
		=
		\Alex(x_1)-\Alex(x_2)+\Alex(x_3)-\Alex(x_4)
		=
		2\Alex(x)-2\Alex(y).\qedhere
	\end{equation*}
\end{proof}

\begin{remark}\label{rem:delta:HF}
	One can show that in the situation of Lemma~\ref{lem:Alex:bigon}, the \(\delta\)-gradings of the intersection points \(x\) and \(y\) agree. Morally, this is because the \(\delta\)-grading agrees with the Maslov grading in Lagrangian Floer homology, which vanishes on once-punctured bigons. To prove this more rigorously, one can either use the techniques developed in~\cite{KWZthinness} for studying the \(\delta\)-grading, which are analogous to the techniques we are using here, or one can do this computation directly as done in the examples in~\cite[Subsection~5.2]{PQMod}.
\end{remark}

\begin{lemma}\label{lem:Alex:ordering}
  Let \(\gamma_{1}\), \(\gamma_{2}\), \(\vartheta_{1}\), and \(\vartheta_{2}\) be four rational or special curves in \(\FourPuncturedSphere\) such that \(\gamma_i\) and \(\vartheta_j\) have different slopes for all \(i,j\in\{1,2\}\). Suppose further that \(\gamma_{1}\) and \(\vartheta_{1}\) are rational. Then there exist generators \(x_{ij}\in \HF(\gamma_i,\vartheta_j)\) such that
  \[
  \Alex(x_{22})+\Alex(x_{11})=\Alex(x_{21})+\Alex(x_{12}).
  \]
  In fact, for any \(x_{22}\in\HF(\gamma_{2},\vartheta_{2})\), there exist elements \(x_{11}\), \(x_{21}\), and \(x_{12}\) with this property. 
\end{lemma}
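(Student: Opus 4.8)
The plan is to realise the desired identity as the ``Alexander grading of a domain'' equation of Proposition~\ref{prop:Alex:HFT:Euler:multiple-curves}, applied to a quadrilateral whose four edges lie on the curves \(\gamma_1\), \(\vartheta_1\), \(\gamma_2\), \(\vartheta_2\) in this cyclic order. Suppose we can find intersection points \(w_1\in\HF(\gamma_1,\vartheta_1)\), \(w_2\in\HF(\vartheta_1,\gamma_2)\), \(w_3\in\HF(\gamma_2,\vartheta_2)\), \(w_4\in\HF(\vartheta_2,\gamma_1)\) and a domain \(\varphi\) for this tuple in the sense of Definition~\ref{def:true_domain}. Writing \(x_{11}:=w_1\), \(x_{22}:=w_3\), and letting \(x_{21}\) and \(x_{12}\) denote \(w_2\) and \(w_4\) regarded as generators of \(\HF(\gamma_2,\vartheta_1)\) and \(\HF(\gamma_1,\vartheta_2)\) respectively, Proposition~\ref{prop:Alex:HFT:Euler:multiple-curves} together with Observation~\ref{obs:Alex:reverse_order} gives
\[
\Alex(x_{11})-\Alex(x_{21})+\Alex(x_{22})-\Alex(x_{12})=\Alex(\varphi).
\]
Thus it suffices to produce, for the \emph{prescribed} generator \(x_{22}\), such a configuration with \(\Alex(\varphi)=0\). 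Recalling that \(\Alex(\varphi)\) is the sum over integer lattice points of the local contributions \(\Alex(\varphi,\tilde{\circ})\), this holds in particular whenever the support of \(\varphi\) meets no lattice point, so it is enough to arrange that \(\varphi\) is an embedded disk in \(\PuncturedPlane\) enclosing no point of the integer lattice.

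Such a quadrilateral is constructed in the covering space \(\PuncturedPlane\), using crucially that both \(\gamma_1\) and \(\vartheta_1\) are rational and hence lift to families of straight lines (Subsection~\ref{sec:review:HFT:geography}). Fix a lift \(\tilde{x}_{22}\) of \(x_{22}\), lying on a lift \(\tilde\gamma_2\) of \(\gamma_2\) and a lift \(\tilde\vartheta_2\) of \(\vartheta_2\); if \(\gamma_2\) or \(\vartheta_2\) is special, first isotope it into a small neighbourhood of the straight line of the corresponding slope, so that the relevant arcs of \(\tilde\gamma_2\) and \(\tilde\vartheta_2\) are essentially straight. Now travel along \(\tilde\gamma_2\) from \(\tilde{x}_{22}\); since \(\gamma_2\) and \(\vartheta_1\) have different slopes, \(\tilde\gamma_2\) crosses some lift of \(\vartheta_1\), and we let \(\tilde{x}_{21}\) be the first such crossing and \(\tilde\vartheta_1\) that lift (a straight line). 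From \(\tilde{x}_{21}\) travel along the line \(\tilde\vartheta_1\) to the first crossing with a lift of \(\gamma_1\), giving a point \(\tilde{x}_{11}\) on a line \(\tilde\gamma_1\); from \(\tilde{x}_{11}\) travel along the line \(\tilde\gamma_1\) to the first crossing with a lift of \(\vartheta_2\), giving a point \(\tilde{x}_{12}\). Choosing the directions of travel consistently and using the convexity supplied by the two straight lines \(\tilde\gamma_1\), \(\tilde\vartheta_1\), one checks that this last lift of \(\vartheta_2\) is \(\tilde\vartheta_2\) and that the four arcs just traced — together with the arc of \(\tilde\vartheta_2\) from \(\tilde{x}_{12}\) back to \(\tilde{x}_{22}\) — bound an embedded disk \(D\) containing no lattice point; compare Figure~\ref{fig:lem:Alex:ordering}.

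The disk \(D\) then represents a multiplicity-one domain \(\varphi\) for the tuple \(\big((\tilde\gamma_1,\tilde\vartheta_1,\tilde\gamma_2,\tilde\vartheta_2),(\tilde{x}_{11},\tilde{x}_{21},\tilde{x}_{22},\tilde{x}_{12})\big)\), and \(\Alex(\varphi)=0\) since \(D\) contains no lattice point; with the displayed equation this yields \(\Alex(x_{22})+\Alex(x_{11})=\Alex(x_{21})+\Alex(x_{12})\). As the construction started from an arbitrary lift of an arbitrary \(x_{22}\), the final sentence of the lemma is immediate. I expect the main obstacle to be the closing-up and puncture-avoidance step: one must verify that following ``first crossings'' really does return to the lift \(\tilde\vartheta_2\) through \(\tilde{x}_{22}\) and that the enclosed region is an honest disk missing the lattice. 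This is precisely where it matters that \emph{both} \(\gamma_1\) and \(\vartheta_1\) are rational — the two consecutive edges of the quadrilateral lying on them are genuine line segments, so the first-crossing procedure cannot be derailed by the winding of a special curve, and the resulting quadrilateral stays tight enough to avoid the integer lattice. (Should the quadrilateral unavoidably enclose lattice points, one instead checks that it encloses equal numbers of the two matched classes, so that \(\Alex(\varphi)\) still vanishes in \(\GeneralAlex\).)
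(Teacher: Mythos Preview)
Your overall strategy is exactly the paper's: reduce to Proposition~\ref{prop:Alex:HFT:Euler:multiple-curves} applied to a quadrilateral with vanishing Alexander grading, and use Observation~\ref{obs:Alex:reverse_order} to flip the signs on the mixed intersection points. Where you diverge is in the construction of the quadrilateral, and that is where the gap lies.

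Your ``first-crossing'' procedure is not guaranteed to close up on the original lift \(\tilde\vartheta_2\), nor to produce a region missing the lattice. You acknowledge this yourself, and neither of your two proposed fixes is justified: the vague appeal to ``convexity supplied by the two straight lines'' does not rule out the path winding past a lattice point before returning, and the fallback claim that any enclosed lattice points come in matched pairs is simply false in general (there is no reason a quadrilateral in \(\PuncturedPlane\) should enclose equal numbers of the four puncture types). The paper's construction avoids this difficulty by exploiting a feature of rational curves you did not use: a rational curve of slope \(s\) lifts to \emph{every} straight line of slope \(s\) missing the lattice, so one has complete freedom in choosing \(\tilde\gamma_1\) and \(\tilde\vartheta_1\). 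The paper first isotopes \(\tilde\gamma_2\) and \(\tilde\vartheta_2\) into \(\varepsilon\)-neighbourhoods of straight lines so that their intersection region \(U\) contains at most one lattice point, and then simply chooses \(\tilde\gamma_1\) and \(\tilde\vartheta_1\) to be lines passing so close to \(\tilde x_{22}\) that the resulting convex quadrilateral \(\tilde x_{22}\tilde x_{12}\tilde x_{11}\tilde x_{21}\) is entirely contained in \(U\) and misses the lattice point. No first-crossing argument is needed; the quadrilateral is arbitrarily small by construction.
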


\begin{proof}
  Let us start by lifting the curves \(\gamma_{2}\) and \(\vartheta_{2}\) to curves \(\LiftG_{2}\) and \(\LiftT_{2}\) in \(\PuncturedPlane\) such that \(\LiftG_{2}\) and \(\LiftT_{2}\) intersect at a lift \(\LiftX_{22}\) of the chosen intersection point \(x_{22}\in\HF(\gamma_{2},\vartheta_{2})\). After some homotopy, we may assume that the curves \(\LiftG_{2}\) and \(\LiftT_{2}\) are contained in \(\varepsilon\)-neighborhoods of straight lines such that the intersection \(U\) of those neighborhoods contains at most a single integer lattice point. Any straight line in \(\PuncturedPlane\) which has the same slope as \(\gamma_{1}\) and which does not intersect the integer lattice points can be regarded as a lift of \(\gamma_{1}\). The same is true, of course, for \(\vartheta_{1}\).
  In particular, we may choose lifts \(\LiftG_{1}\) and \(\LiftT_{1}\) of \(\gamma_{1}\) and \(\vartheta_{1}\), respectively, such that the intersection point \(\LiftX_{11}\) between \(\LiftT_{1}\) and \(\LiftG_{1}\) lies arbitrarily close to \(\LiftX_{22}\). 
  If \(\LiftX_{11}\) and \(\LiftX_{22}\) are sufficiently close,  
  the intersection points \(\LiftX_{ij}\in \LiftG_i\cap\LiftT_j\) define a convex quadrilateral \(\LiftX_{22}\LiftX_{12}\LiftX_{11}\LiftX_{21}\) which is entirely contained in \(U\) and does not contain any integer lattice point, as illustrated in Figure~\ref{fig:lem:Alex:ordering}. 
  This quadrilateral defines a domain for the tuples \((\LiftG_{2},\LiftT_{2},\LiftG_{1},\LiftT_{1})\) and \((\LiftX_{22},\LiftX'_{12},\LiftX_{11},\LiftX'_{21})\), where \(\LiftX'_{12}\) and \(\LiftX'_{21}\) are the points \(\LiftX_{12}\) and \(\LiftX_{21}\) regarded as elements of \(\LiftT_{2}\cap\LiftG_{1}\) and \(\LiftT_{1}\cap\LiftG_{2}\), respectively. By Proposition~\ref{prop:Alex:HFT:Euler:multiple-curves},
  \[
  \Alex(x_{22})+
  \Alex(x'_{12})+
  \Alex(x_{11})+
  \Alex(x'_{21})=0.
  \]
  By Observation~\ref{obs:Alex:reverse_order}, \(\Alex(x'_{12})=-\Alex(x_{12})\) and \(\Alex(x'_{21})=-\Alex(x_{21})\). 
  So the claim follows. 
\end{proof} 

\section{Detection of split tangles}\label{sec:detection:split}

The goal of this section is to prove Theorem~\ref{thm:detection:split:Intro}, namely $\HFT$ detects split tangles.  Recall that a tangle $T$ is \emph{split} if there exists an essential curve in $\partial B^3 \smallsetminus \partial T$ which bounds a disk in $B^3 \smallsetminus T$.  We will in fact show the following version of Theorem~\ref{thm:detection:split:Intro}, which is slightly stronger.   

\begin{theorem}\label{thm:detection:split}
  A tangle \(T\) is split if and only if \(\HFT(T)\) only contains rational components that all have the same slope. 
  If \(T\) is split, the local systems on those components are trivial. 
\end{theorem}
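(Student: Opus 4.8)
The plan is to prove both directions of the equivalence, together with the claim about local systems, largely by combining the gluing theorem (Theorem~\ref{thm:GlueingTheorem:HFT:HFr}), the geography result (Theorem~\ref{thm:geography_of_HFT}), and the rational tangle detection theorem (Theorem~\ref{thm:HFT:rational_tangle_detection}). The forward direction (split $\Rightarrow$ rational components of a single slope with trivial local systems) is the more hands-on implication. If $T$ is split, there is an essential curve $c$ in $\partial B^3\smallsetminus\partial T$ bounding a disk in $B^3\smallsetminus T$; cutting along this disk expresses $T$ as a disjoint (``horizontal'') union, and I would argue that one can isotope $T$ so that it is a rational tangle $Q_s$ with some links tied into the two strands that are separated by the disk associated with $Q_s$, i.e.\ $T$ is what the introduction calls a \emph{split} tangle in the structured sense: $T = Q_s \# (L_1 \sqcup L_2)$ for links $L_1, L_2$. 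Then $\CFTd(T)$ decomposes as a tensor product of the peculiar module of $Q_s$ with $\widehat{\mathit{CFK}}$-type data of the $L_i$, so $\HFT(T)$ is obtained from the single embedded straight-line curve $\Rational(s)$ by taking a direct sum of parallel copies (controlled by $\dim\HFKhat(L_1)\cdot\dim\HFKhat(L_2)$). Since the geography theorem forbids nontrivial local systems on anything but rational curves, and since a direct sum of parallel rational curves with a nontrivial local system could in principle appear, I would appeal to the detection-of-rational-tangles argument structure in~\cite{PQMod}: the curve of a connected sum with a local summand must still be the immersed-curve invariant of a rational tangle after forgetting the extra algebra factors, hence trivial local system. (Alternatively: the connected-sum formula makes the local system a Jordan block associated to the $U=0$ differential on $\CFKhat(L_i)$, which over $\F$ is conjugate to the identity because that differential squares to zero and the homology has the right rank.)

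For the converse (rational components all of the same slope $\Rightarrow$ split), I would use the fact that $\HFT$ detects rational tangles together with the twisting-equivariance of $\HFT$ (Theorem~\ref{thm:HFT:Twisting}) and the structure theorem. If every component of $\HFT(T)$ is rational of the same slope $s$, then after applying an element of the mapping class group (i.e.\ adding twists to the tangle ends, which corresponds to an $\SL_2(\Z)$ action taking slope $s$ to slope $\infty$), we may assume all components are the single straight-line curve $\Rational(\infty)$, possibly with multiplicity / local systems. I would then show the corresponding tangle $T'$ is a horizontal disjoint union by analyzing $\CFTd(T')$: a peculiar module supported entirely on parallel copies of the $\infty$-curve is (up to homotopy equivalence) induced from the trivial tangle with extra closed components, which geometrically forces the essential curve $c$ bounding the disk. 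To make this rigorous I would invoke Theorem~\ref{thm:GlueingTheorem:HFT:HFr}: pair $T$ against $Q_{s'}$ for various slopes $s'$; the resulting knots $Q_{s'}\cup T$ have knot Floer homology equal to $\HFr(\mr(\Rational(s)^{\oplus m}), \Rational(s'))$, which one computes directly from the geometry of parallel lines; matching this against what connected sums with two-bridge knots must look like pins down that $T$ is built from a rational tangle with links tied in. A cleaner route for this direction may be topological: once $\HFT(T)$ is a union of parallel embedded curves of slope $s$, $T$ is ``rational after connect-sums,'' and one reads off the separating disk; here one leans on the fact proved in~\cite{PQMod, PQSym} that the geometry of $\HFT$ reflects the tangle complement faithfully enough to detect an essential compressing disk.

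The main obstacle I anticipate is the local-systems claim and, relatedly, ruling out the possibility that $\HFT(T)$ consists of several parallel rational curves of the same slope carrying a \emph{nontrivial} indecomposable local system — i.e.\ showing that this forces triviality (equivalently, showing that such a curve-with-local-system still comes from a split tangle rather than something more exotic). The geography theorem only tells us nontrivial local systems are confined to rational components; it does not by itself say they cannot occur. I expect the resolution to be a connected-sum computation: the peculiar module of $Q_s \# (L_1 \sqcup L_2)$ is $\CFTd(Q_s) \otimes_{\F} \CFKhat(L_1) \otimes_{\F} \CFKhat(L_2)$, and the induced ``local system'' on the curve $\Rational(s)$ is the linear map obtained from the internal differentials of the $\CFKhat(L_i)$ via the classification equivalence. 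Over $\F$, a nilpotent endomorphism with the correct rank constraints on its homology is conjugate to a direct sum of $2\times 2$ Jordan blocks with zero eigenvalue — but the classification of~\cite{PQMod} records local systems up to similarity by \emph{invertible} matrices, and one checks these nilpotent-differential contributions assemble, after passing to homology / choosing the reduced model of the peculiar module, into an identity local system. I would isolate this as a separate lemma. The other, more bookkeeping-level difficulty is making precise the passage ``split tangle (compressing disk) $\Leftrightarrow$ structured split tangle $Q_s \# (L_1 \sqcup L_2)$,'' which is standard three-manifold topology (the compressing disk, being essential in a four-punctured sphere, is either $\partial$-parallel to a rational-tangle disk after isotopy, or its existence is obstructed), but must be stated carefully so that the Floer computation can be plugged in.
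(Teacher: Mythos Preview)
Your forward direction (split $\Rightarrow$ parallel rational curves) via a K\"unneth-type tensor product formula for $\CFTd$ is reasonable and is in fact mentioned in the paper as an alternate strategy (see the remark following the proof). But the converse, which is the hard direction, has a genuine gap. You propose to deduce splitness either by pairing $T$ against various $Q_{s'}$ and matching the resulting knot Floer homologies, or by ``leaning on \cite{PQMod,PQSym}'' to read off a compressing disk. Neither of these is a mechanism: knowing $\HFKhat$ of all rational closures of $T$ does not obviously determine whether $T$ is split, and Theorem~\ref{thm:HFT:rational_tangle_detection} only tells you that a \emph{single} embedded curve forces a rational tangle --- extending this to ``multiple parallel rational curves force a split tangle'' is precisely the content of the theorem, so invoking it is circular. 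The paper's key idea, which your proposal is missing entirely, is sutured Floer homology: the intersection of $\HFT(T)$ with an arc of slope $\infty$ computes $\SFH$ of the tangle complement with a specific set of sutures \cite[Theorem~6.3]{HDsForTangles}, so if all components are parallel rationals of slope $\infty$ this $\SFH$ vanishes. By Juh\'asz \cite[Theorem~1.4]{SurfaceDecomposition}, vanishing $\SFH$ means the sutured manifold is not taut, and a short topological analysis of what non-tautness means for this particular $(M,\gamma)$ produces the compressing disk.

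Your local-systems discussion is also confused. Local systems on rational components are \emph{invertible} matrices by definition, so there is no ``nilpotent endomorphism'' or ``Jordan block with zero eigenvalue'' to analyze; the question is whether the invertible matrix is conjugate to the identity. The paper settles this cleanly by comparing two closures: $T(0)$ is a connected sum $L_1\# L_2$ while $T(\infty)$ is the split link $L_1\amalg L_2$, and the pairing formula \cite[Theorem~4.45]{PQMod} expresses $\dim\HFKhat$ of each in terms of the local systems $X_i$. The K\"unneth relation $2\dim\HFKhat(L_1\# L_2)=\dim\HFKhat(L_1\amalg L_2)$ then forces $\dim\ker(X_i-\id)=\dim X_i$, i.e.\ $X_i=\id$.
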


\begin{figure}[tb]
	\(\splittangle\)
	\caption{A split tangle}\label{fig:Split_tangle}
\end{figure}

\begin{proof}
	For the if-direction, suppose that \(\HFT(T)\) does not contain any special component and that all rational components have the same slope. 
  Thanks to naturality of \(\HFT(T)\) under twisting (Theorem~\ref{thm:HFT:Twisting}), it suffices to show that if this slope is \(\infty\), then \(T\) looks like Figure~\ref{fig:Split_tangle}. 
  
  Let \((M,\hat{\gamma})\) be the tangle complement equipped with a single suture parallel to the rational component, two meridional sutures around the two tangle ends \(\TEIII\) and \(\TEIV\) and one pair of oppositely oriented meridional sutures on each closed component of \(T\). 
  By \cite[Theorem~6.3]{HDsForTangles}, the intersection points of \(\HFT(T)\) with an arc on \(\FourPuncturedSphere\) of slope \(\infty\) between the two tangle ends \(\TEI\) and \(\TEII\) compute the sutured Floer homology \(\SFH(M,\hat{\gamma})\) of this balanced sutured manifold. 
  By assumption, there are no such intersection points, so this sutured Floer homology vanishes. 
  
  We can simplify \((M,\hat{\gamma})\). Let \(\gamma\) be the set of sutures obtained from \(\hat{\gamma}\) by removing the meridional sutures around the two tangle ends \(\TEIII\) and \(\TEIV\).  
  By Lemma~\ref{lem:HFT_detects_connectivity}, these two meridional sutures lie on the same tangle component. \((M,\gamma)\) is also a balanced sutured manifold. 
  Therefore, we can apply Juhász's surface decomposition formula \cite[Proposition~8.6]{SurfaceDecomposition} to show that 
  \[
  \SFH(M,\gamma)\otimes \F^2
  \cong
  \SFH(M,\hat{\gamma});
  \]
  see for example~\cite[Proof of Theorem~6.7]{HDsForTangles}. So we also obtain that \(\SFH(M,\gamma)=0\). By~\cite[Theorem~1.4]{SurfaceDecomposition}, this implies that \((M,\gamma)\) is not taut.  Since the knot Floer homology of a link in $S^3$ is non-vanishing, the K\"unneth formula for sutured Floer homology \cite[Proposition~9.15]{SFH} implies that the sutured Floer homology of the sutured exterior of $T - L$ vanishes whenever $L$ is a link contained in an embedded $B^3$ in $M$.  Consequently, the sutured exterior of $T - L$ in $M$ is not taut when $L$ is a link contained in an embedded $B^3$.  In the remainder of the proof for this direction, we will only use the tautness of $(M,\gamma)$ and hence we may restrict to the case that the exterior of $T - L$ is irreducible and show that this is split.   The splitness of $T - L$ will then imply the splitness of $T$.
  
  Then there are only two possible reasons for why \((M,\gamma)\) could not be taut: Either \(R(\gamma)\), i.e.\ the boundary of \(M\)
  minus a tubular neighborhood of the sutures \(\gamma\), is not Thurston
  norm minimizing in \(H_2(M,\gamma)\) or it is compressible. 
  Equivalently, this is true for one of the components \(R_-(\gamma)\) or \(R_+(\gamma)\) of \(R(\gamma)\), since \([R(\gamma)]=2[R_-(\gamma)]=2[R_+(\gamma)]\in H_2(M,\gamma)\). 
  Note that \(R_-(\gamma)\) and \(R_+(\gamma)\) both consist of an annulus for each closed component of \(T\) and a single genus one
  surface with one boundary component, which is parallel to
  the suture \(\gamma\). Let us denote the genus one component by \(S_\pm\). 
  If \(R_-(\gamma)\) is not Thurston norm minimizing, the suture \(\gamma\) bounds a disk, which separates the two open components of \(T\), so we are done. In the other case, when there is a compressing disk, its boundary \(\alpha\) has to lie on \(S_-\). 
  If \(\alpha\) splits \(S_-\) into two, a simple Euler characteristic argument shows that \(\alpha\) is parallel to \(\gamma\), so we obtain the same compressing disk as before. 
Otherwise, \(\alpha\) splits \(S_-\) into a three-punctured disk. The mapping class group of a once-punctured torus agrees with that of the torus, so \(\alpha\) is just some curve of rational slope. In particular, it is completely determined by its homology class in \(H_1(S_-)\), which is freely generated by the meridian of an open tangle and a curve dual to it.  Since the meridians freely generate the first homology of the tangle complement, the coefficient of the meridian in \(\alpha\) is zero. So \(\alpha\), being embedded, must be the dual curve, up to sign. This means that this open tangle component is boundary parallel, so we are also done in this case.
  
  The only-if-direction follows more easily: \((M,\gamma)\) is not taut, so \(\SFH(M,\gamma)=0\), so also \(\SFH(M,\hat{\gamma})=0\).  Therefore, up to twisting, \(\HFT(T)\) does not intersect the arcs of slope \(\infty\) between tangle ends, so the only components can be rationals of slope \(\infty\).  It remains to prove that the local systems are trivial.  Let $r_1,\ldots, r_n$ be the components of $\HFT(T)$, all of slope $\infty$, and let their local systems be represented by the matrices $X_1,\ldots,X_n$ of dimensions $k_1,\ldots,k_n$, respectively.  Then, by the gluing theorem in conjunction with \cite[Theorem~4.45]{PQMod}, 
\[
\dim \HFKhat(T(0)) = \sum_{i=1}^n k_i , \quad \dim \HFKhat(T(\infty)) = 2 \sum_{i=1}^n \dim \ker (X_i - \id).
\]
For \(i=1,2\), let \(L_i\) be the link defined by closing the two-ended tangle \(T_i\) from Figure~\ref{fig:Split_tangle}. Then $T(0)$ is the connected sum \(L_1 \# L_2\) and $T(\infty)$  is the split link $L_1 \amalg L_2$. 
By the Künneth formulas for knot Floer homology, $2 \dim \HFKhat(L_1 \# L_2) = \dim \HFKhat(L_1 \amalg L_2)$ \cite [Equations (5) and (6)]{HFK}.  Therefore, it follows that $X_i = \id$ for each $i$, i.e.\ the local systems are all trivial.
\end{proof}

\begin{remark}
An alternate strategy to the proof of Theorem~\ref{thm:detection:split} involves proving a K\"unneth formula for $\HFT$: For any tangle \(T\) and any link \(L\), 
  \[
 	\HFT(T\amalg L)\cong \Big(V'\otimes\HFKhat(L)\Big)\otimes \HFT(T).
  \] 
  Moreover, if \(T_1\) and \(T_2\) are two-ended tangles and \(T\) is their disjoint union as shown in Figure~\ref{fig:Split_tangle}, then
	\[
	\HFT(T)
	\cong
	\Big(\HFKhat(L_1)\otimes\HFKhat(L_2)\Big)\otimes\HFT(Q_\infty),
	\]
	where \(L_1\) and \(L_2\) are the links obtained as the closures of \(T_1\) and \(T_2\), respectively. 
Since the invariant \(\HFT\) is essentially a multi-pointed Heegaard Floer theory \cite[Definition~2.16]{PQMod}, these formulas follow from the same arguments as the Künneth formulas for link Floer homology \cite[Section~11]{OSHFL}.  Alternatively, they can also be proved using nice Heegaard diagrams, similar to the argument used in~\cite[Proof of Theorem~4.2]{PQSym}. 
\end{remark}

\section{Pairing calculations}\label{sec:pairings}

Throughout this section, let us fix some \(\alpha\in\Z^{>0}\). We will study the Lagrangian Floer homology of the special curve \(\textcolor{red}{\Special_\alpha(0;\TEIV,\TEI)}\) and other special and rational curves, restricting ourselves to the relative univariate Alexander and \(\delta\)-grading. 
Before doing so, we introduce some notation.

\begin{definition}
Given some \(k\in\Z^{>0}\), let \(\Contiguous{k}\) denote the contiguous, absolutely bigraded vector space of dimension \(k\), supported in Alexander gradings \(1,\dots,k\) and \(\delta\)-grading 0.
\end{definition}

\begin{lemma}\label{lem:computation:specials}
	Let \(\beta\in\Z^{>0}\) with \(\beta\leq\alpha\).  Suppose \(\HFr(\textcolor{red}{\Special_\alpha(0;\TEIV,\TEI)},\textcolor{blue}{\Special_\beta(0;\TEIV,\TEI)})\) is skeletal. Then 
	\[
	\HFr(\textcolor{red}{\Special_\alpha(0;\TEIV,\TEI)},\textcolor{blue}{\Special_\beta(0;\TEIV,\TEI)})
	=
	\delta^0 t^{-\alpha} \Contiguous{2\beta}
	\oplus
	\delta^{\pm1} t^\alpha \Contiguous{2\beta}
	\]
	as relatively bigraded vector spaces. 
	Moreover, the punctures \(\TEI\) and \(\TEIV\) of \(\FourPuncturedSphere\) are oriented in the same direction. 
\end{lemma}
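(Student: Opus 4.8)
The plan is to compute the Lagrangian Floer chain complex of the two curves directly in the covering space $\PuncturedPlane$ and then to read off the two gradings using the tools of Subsection~\ref{sec:review:HFT:Alex}. By Theorem~\ref{thm:HFT:Twisting} there is no loss in working with the slope-$0$ representatives. I would begin by lifting $\textcolor{red}{\Special_\alpha(0;\TEIV,\TEI)}$ and $\textcolor{blue}{\Special_\beta(0;\TEIV,\TEI)}$ to $\PuncturedPlane$ so that both run inside a small neighbourhood of the horizontal line through the lattice points $\tilde{\TEIV}$ and $\tilde{\TEI}$, winding $\alpha$ and $\beta$ times respectively as in Figure~\ref{fig:intro:curves}. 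Putting the two curves in minimal position and using $\beta\leq\alpha$, I would enumerate the intersection points; they organise into two families, distinguished by the two winding regions (equivalently, by the way the two curves cross there), and together with the factor $\dim V=2$ implicit in passing from $\HF$ to $\HFr$, the answer should be $4\beta$-dimensional.

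For the relative $\delta$-grading I would apply the left/straight/right turn rule of Subsection~\ref{sec:review:HFT:Bigrading}: inside each family the connecting paths go straight across, so $\delta$ is constant, while passing from one family to the other costs $\pm1$; this produces the two summands sitting in $\delta$-gradings $0$ and $\pm1$. For the relative Alexander grading within a family I would use Lemma~\ref{lem:Alex:bigon}: consecutive intersection points are joined by a once-punctured bigon covering $\TEI$ or $\TEIV$, so they differ by $\pm\varepsilon_{\TEI}$, respectively $\pm\varepsilon_{\TEIV}$ in $A$; following these bigons around the winding shows the Alexander gradings in a family sweep out $2\beta$ consecutive values, so each family contributes a copy of $\Contiguous{2\beta}$. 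To locate the two families relative to each other I would build an explicit polygonal domain in $\PuncturedPlane$ joining a generator of the first family to one of the second, bounded by arcs of the two lifts and of $\ParaCovering$, and evaluate its Alexander grading via Proposition~\ref{prop:Alex:HFT:Euler:multiple-curves}; this records the separation of the two families and is the source of the $t^{-\alpha}$ versus $t^\alpha$ shift.

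Two points still need care. First, the chain complex obtained above a priori carries a differential from once-punctured bigons; rather than determine it bigon by bigon, I would invoke the hypothesis that $\HFr$ is skeletal: the $\delta$-gradings prevent any differential between the two families, and within a family skeletality forces the differential to cancel all but $2\beta$ of the generators, leaving exactly $\delta^0 t^{-\alpha}\Contiguous{2\beta}\oplus\delta^{\pm1}t^\alpha\Contiguous{2\beta}$. Note that the inequality $\beta\leq\alpha$ is precisely what keeps the Alexander supports $\{1-\alpha,\dots,2\beta-\alpha\}$ and $\{1+\alpha,\dots,2\beta+\alpha\}$ of the two summands disjoint, consistent with skeletality. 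Second, the assertion that $\TEI$ and $\TEIV$ are oriented in the same direction is read off from the same computation: the univariate Alexander grading is assembled from the signs $\varepsilon_{\TEj}$ of the ordered matching, and the opposite choice of orientations at $\TEI$ and $\TEIV$ would make the two families overlap in Alexander grading, contradicting skeletality; so the stated orientation is forced.

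The main obstacle I expect is the combinatorial bookkeeping in the middle step — matching up the $\alpha$ windings of the larger curve against the $\beta$ windings of the smaller one in minimal position, identifying exactly which pairs of intersection points cobound once-punctured bigons, and tracking the signs $\varepsilon_{\TEI},\varepsilon_{\TEIV}$ throughout — since an off-by-one or sign error would change the widths of the $\Contiguous{\cdot}$ blocks or the $t$-shift. Getting the family-separating polygonal domain right, so that Proposition~\ref{prop:Alex:HFT:Euler:multiple-curves} yields exactly $\pm\alpha$ rather than, say, $\pm\beta$, is the most delicate part.
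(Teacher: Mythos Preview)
Your overall plan---enumerate intersection points in minimal position, compute the two gradings via once-punctured bigons and a larger connecting domain, then use skeletality to pin down the orientation of $\TEI$ and $\TEIV$---is the paper's approach. The gap is in your analysis of the differential.

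You write that ``the chain complex \ldots\ a priori carries a differential from once-punctured bigons'' and that ``the $\delta$-gradings prevent any differential between the two families, and within a family skeletality forces the differential to cancel all but $2\beta$ of the generators.'' Each clause is off. The Floer differential counts \emph{unpunctured} bigons, hence preserves the Alexander grading and shifts $\delta$ by exactly~$1$. Since the two families sit in $\delta$-gradings differing by~$1$, the $\delta$-grading does \emph{not} exclude differentials between them; it only fixes the direction. Conversely, within a family $\delta$ is constant, so the differential already vanishes there for grading reasons---no appeal to skeletality is needed, and such an appeal would in any case be circular (skeletality is a hypothesis about the homology you are trying to compute). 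What actually kills inter-family differentials is the Alexander grading: in minimal position the two families occupy disjoint Alexander gradings except for a single pair of generators that coincide only when $\alpha=\beta$, and in that case two bigons contribute and cancel. Without this step you have not shown that the chain complex has vanishing differential, so you have not computed $\HFr$. A smaller point: the left/straight/right rule of Subsection~\ref{sec:review:HFT:Bigrading} grades intersections of a single curve with the parametrisation, not generators of $\HF(\gamma,\gamma')$; for the latter use Remark~\ref{rem:delta:HF}.
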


\begin{lemma}\label{lem:computation:mixed}
	Let \(n\in\Z\). Suppose \(\HFr(\textcolor{red}{\Special_\alpha(0;\TEIV,\TEI)},\textcolor{blue}{\Rational(\tfrac{1}{n})})\) is skeletal. Then 
	\[
	\HFr(\textcolor{red}{\Special_\alpha(0;\TEIV,\TEI)},\textcolor{blue}{\Rational(\tfrac{1}{n})})
	=
	\Contiguous{2\alpha}
	\]
	as relatively bigraded vector spaces. 
\end{lemma}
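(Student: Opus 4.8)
The plan is to compute this pairing directly in the covering space $\PuncturedPlane$, letting the skeletality hypothesis take care of the most delicate grading bookkeeping. Lift $\textcolor{blue}{\Rational(\tfrac1n)}$ to a straight line of slope $\tfrac1n$ and $\textcolor{red}{\Special_\alpha(0;\TEIV,\TEI)}$ to the curve of Figure~\ref{fig:intro:curves} with $\alpha$ in place of $n$, and isotope these into minimal position, cobounding no immersed annulus. An inspection of the resulting picture shows that the two curves meet in exactly $4\alpha$ points, a count independent of $n$ since the slopes $0$ and $\tfrac1n$ are at Farey distance one for every $n\in\Z$. As the two curves have distinct slopes and both carry one-dimensional local systems --- the special curve by Theorem~\ref{thm:geography_of_HFT} --- we get $\dim_\F\HF(\textcolor{red}{\Special_\alpha(0;\TEIV,\TEI)},\textcolor{blue}{\Rational(\tfrac1n)})=4\alpha$ together with a vanishing differential, so $\dim_\F\HFr=2\alpha$.

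Next I would pin down the relative bigrading of these $4\alpha$ intersection points using the connecting-domain formulas of Subsection~\ref{sec:review:HFT:Alex}, in particular Lemma~\ref{lem:Alex:bigon} and Remark~\ref{rem:delta:HF} applied to the once-punctured bigons between the two curves. The claim to extract from the picture is that the intersection points are linked to one another by a chain of once-punctured bigons: then Remark~\ref{rem:delta:HF} shows they all share a single $\delta$-grading, and Lemma~\ref{lem:Alex:bigon} shows that bigon-adjacent intersection points have Alexander gradings differing by $\pm1$. Because a connected graph with integer vertex labels whose adjacent labels differ by one has an interval of integers as its label set, the Alexander gradings of the intersection points occupy an interval $I\subseteq\Z$.

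It remains to feed this into the hypothesis. Write $\HF\cong\HFr\otimes V$, fix a representative of the relative Alexander grading so that $V$ sits in Alexander gradings $0$ and $1$, and let $S$ be the (finite) set of Alexander gradings supporting $\HFr$. Skeletality gives $|S|=\dim_\F\HFr=2\alpha$, and the Alexander support of $\HF$ is the sumset $S+\{0,1\}=I$. Now $|I|\le|S|+1=2\alpha+1$, with equality exactly when $S$ is an interval; and $|I|=2\alpha$ is impossible, since then $S\subseteq I$ with $|S|=|I|$ would force $S=I$ and hence $|S+\{0,1\}|=2\alpha+1$. So $|I|=2\alpha+1$, $S$ is an interval of $2\alpha$ consecutive integers, and $\HFr$ carries exactly one generator in each of them. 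Since $\HF$, hence also $\HFr$, lies in a single $\delta$-grading, we conclude $\HFr\cong\Contiguous{2\alpha}$ up to an overall shift of bigrading, as asserted.

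The step I expect to be the main obstacle is the geometric combinatorics in $\PuncturedPlane$: checking that the two lifts can be placed in minimal position without cobounding an annulus, that their geometric intersection number is $4\alpha$ uniformly in $n$, and that the intersection points are linked into a single chain by once-punctured bigons so that Remark~\ref{rem:delta:HF} and Lemma~\ref{lem:Alex:bigon} apply. Once the lift of $\Special_\alpha(0;\TEIV,\TEI)$ is drawn carefully this becomes routine, but it carries the geometric content of the argument.
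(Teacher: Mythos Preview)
Your overall plan---lift to $\PuncturedPlane$, count $4\alpha$ intersection points, link them by once-punctured bigons to get a single $\delta$-grading and an interval of Alexander gradings, then invoke skeletality---is close in spirit to the paper's argument, and the geometric claims you flag (minimal position, intersection number $4\alpha$, connectivity via bigons) are exactly what the paper reads off from its Figure~\ref{fig:computation:mixed}. The gap is in your combinatorial endgame.

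The inequality ``$|I|\le|S|+1$'' is backwards. For any finite $S\subseteq\Z$ one has $|S+\{0,1\}|=|S\cup(S+1)|\ge|S|+1$, with equality precisely when $S$ is an interval; a set like $S=\{0,1,3,4\}$ gives $|S+\{0,1\}|=6>|S|+1$. So from $I=S+\{0,1\}$ and $|S|=2\alpha$ you only get the \emph{lower} bound $|I|\ge2\alpha+1$. Your argument supplies no upper bound on $|I|$: knowing merely that the Alexander support of $\HF$ is an interval and that $\dim\HF=4\alpha$ allows $|I|$ as large as $4\alpha$, and then $S$ need not be an interval. The case ``$|I|=2\alpha$'' you rule out is not the enemy; it is $|I|>2\alpha+1$ that you must exclude, and nothing in your sketch does so.

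The paper closes this gap by working one level deeper in the bigon structure. It first exhibits $2\alpha$ bigons through the puncture $\TEI$ pairing $x_{2\alpha+1-\varepsilon}$ with $x_{2\alpha+\varepsilon}$, which identifies $\HFr$ explicitly with the span of $\{x_{2\alpha+\varepsilon}\}_{\varepsilon=1,\dots,2\alpha}$. It then finds a further chain of bigons, alternately through $\TEIV$ and $\TEI$, among these $2\alpha$ generators. This determines the multivariate Alexander gradings of the $\HFr$ generators precisely (a zigzag in the $e_{\TEI}$--$e_{\TEIV}$ plane), and the skeletal hypothesis is used only to force the orientations of $\TEI$ and $\TEIV$ to agree, after which contiguity is immediate. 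In other words, the paper computes the multiset of gradings rather than just its support; your approach discards exactly the multiplicity information needed to conclude.
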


\begin{remark}
	In the graphical notation from Remark~\ref{rem:gaps}, the two vector spaces from Lemmas~\ref{lem:computation:specials} and~\ref{lem:computation:mixed} look as follows:
	\[\setlength{\tabcolsep}{10pt}
		\begin{tabular}{cc}
			\(\HFr(\textcolor{red}{\Special_\alpha(0;\TEIV,\TEI)},\textcolor{blue}{\Special_\beta(0;\TEIV,\TEI)})\)
			&
			\(\HFr(\textcolor{red}{\Special_\alpha(0;\TEIV,\TEI)},\textcolor{blue}{\Rational(\tfrac{1}{n})})\)
			\\
			\(
			\vc{%
				\begin{tikzpicture}[scale=0.5]
				\draw[->] (0,0) node [left] {\phantom{\(A\)}} -- (16,0) node[right]{\(A\)};
				\draw (1,-0.25) -- (1,0.25) node[above] {\(\bullet\)};
				\draw (2,-0.25) -- (2,0.25) node[above] {\(\bullet\)};
				\draw (3,0.25) node [above] {\(\cdots\)};
				\draw (4,-0.25) -- (4,0.25) node[above] {\(\bullet\)};
				\draw (5,-0.25) -- (5,0.25) node[above] {\(\bullet\)};
				\draw [decorate,decoration={brace}] (5.25,-0.4) -- (0.75,-0.4) node[midway, below] {\(\scriptstyle 2\beta\)};
				\draw (6,-0.25) -- (6,0.25);
				\draw (7,-0.25) -- (7,0.25);
				\draw (8,0.25) node [above] {\(\cdots\)};
				\draw (9,-0.25) -- (9,0.25);
				\draw (10,-0.25) -- (10,0.25);
				\draw [decorate,decoration={brace}] (10.25,-0.4) -- (5.75,-0.4) node[midway, below] {\(\scriptstyle 2(\alpha-\beta)\)};
				\draw (11,-0.25) -- (11,0.25) node[above] {\(\circ\)};
				\draw (12,-0.25) -- (12,0.25) node[above] {\(\circ\)};
				\draw (13,0.25) node [above] {\(\cdots\)};
				\draw (14,-0.25) -- (14,0.25) node[above] {\(\circ\)};
				\draw (15,-0.25) -- (15,0.25) node[above] {\(\circ\)};
				\draw [decorate,decoration={brace}] (15.25,-0.4) -- (10.75,-0.4) node[midway, below] {\(\scriptstyle 2\beta\)};
		\end{tikzpicture}}
			\)
			&
			\(
			\vc{%
				\begin{tikzpicture}[scale=0.5]
				\draw[->] (0,0) node [left] {\phantom{\(A\)}} -- (6,0) node[right]{\(A\)};
				\draw (1,-0.25) -- (1,0.25) node[above] {\(\bullet\)};
				\draw (2,-0.25) -- (2,0.25) node[above] {\(\bullet\)};
				\draw (3,0.25) node [above] {\(\cdots\)};
				\draw (4,-0.25) -- (4,0.25) node[above] {\(\bullet\)};
				\draw (5,-0.25) -- (5,0.25) node[above] {\(\bullet\)};
				\draw [decorate,decoration={brace}] (5.25,-0.4) -- (0.75,-0.4) node[midway, below] {\(\scriptstyle 2\alpha\)};
		\end{tikzpicture}}
			\)
		\end{tabular}
	\]
	In the first vector space, the \(\delta\)-grading of the generators \(\bullet\) is constant, the same is true for the \(\delta\)-grading of the generators \(\circ\), and the difference between these two \(\delta\)-gradings is \(\pm1\).  
	The sign is immaterial for the proof of the main theorem. As will be apparent from the proof below, this sign only depends on the orientation of the punctures \(\TEI\) and \(\TEIV\).
\end{remark}

\begin{figure}[t]
	\(\PairingIrrIrrParallel\)
	\caption{Calculation of \(\HFr(\textcolor{red}{\Special_\alpha(0;\TEIV,\TEI)},\textcolor{blue}{\Special_\beta(0;\TEIV,\TEI)})\) for the proof of Lemma~\ref{lem:computation:specials}}\label{fig:computation:specials}
\end{figure}

\begin{proof}[Proof of Lemma~\ref{lem:computation:specials}]
	Figure~\ref{fig:computation:specials} shows the diagram from which we will compute the vector space \(\HFr(\textcolor{red}{\Special_\alpha(0;\TEIV,\TEI)},\textcolor{blue}{\Special_\beta(0;\TEIV,\TEI)})\) in the case that \(\beta\) is even. 
	If \(\beta\) is odd, there is a very similar diagram that can be obtained by rotating the outer part by \(\pi\) in the drawing plane such that the outermost blue curve segments lie on the right. 
	We claim that \(\HF(\textcolor{red}{\Special_\alpha(0;\TEIV,\TEI)},\textcolor{blue}{\Special_\beta(0;\TEIV,\TEI)})\) is freely generated by the intersection points in this diagram.

	For each \(\varepsilon=1,\dots,\beta\), \(z\in\{x,y\}\) and \(k\in\{1,2\}\), there is a bigon from \(z_{k1}^\varepsilon\) to \(z_{k2}^\varepsilon\) covering the puncture \(\TEI\) once. 
	So by Lemma~\ref{lem:Alex:bigon} and Remark~\ref{rem:delta:HF}, each of these pairs of generators sits in consecutive Alexander gradings and a single \(\delta\)-grading.
	To compute \(\HFr(\textcolor{red}{\Special_\alpha(0;\TEIV,\TEI)},\textcolor{blue}{\Special_\beta(0;\TEIV,\TEI)})\), it therefore suffices to restrict our attention to the intersection points \(z_{k1}^\varepsilon\), where \(\varepsilon=1,\dots,\beta\), \(z\in\{x,y\}\) and \(k\in\{1,2\}\). The following diagram shows these generators arranged in the 2-dimensional plane according to their Alexander gradings:
	\[
	\begin{tikzpicture}[xscale=0.85,yscale=0.425,decoration=brace]
	\draw [->] (-9.25,-1.5) -- (-7.5,2) node [above] {\(\Z e_{\TEIV}\subset\GeneralAlex\)};
	\draw [->] (-9.25,-0.5) -- (-7.5,-4) node [below] {\(\Z e_{\TEI}\subset\GeneralAlex\)};
	
	\node (X1_1) at (-8,-1) {$x_{11}^1$};
	\node (X2_1) at (-7,+1) {$x_{21}^1$};
	\node (X1_2) at (-6,-1) {$x_{11}^2$};
	\node (X2_2) at (-5,+1) {$x_{21}^2$};
	\node (X1_3) at (-4,-1) {};
	\node (X2_3) at (-3,+1) {};
	\node (X1_b) at (-2,-1) {$x_{11}^\beta$};
	\node (X2_b) at (-1,+1) {$x_{21}^\beta$};
	
	\node (Y2_1) at (8,+1) {$y_{21}^1$};
	\node (Y1_1) at (7,-1) {$y_{11}^1$};
	\node (Y2_2) at (6,+1) {$y_{21}^2$};
	\node (Y1_2) at (5,-1) {$y_{11}^2$};
	\node (Y2_3) at (4,+1) {};
	\node (Y1_3) at (3,-1) {};
	\node (Y2_b) at (2,+1) {$y_{21}^\beta$};
	\node (Y1_b) at (1,-1) {$y_{11}^\beta$};
	
	\draw (X1_1) -- (X2_1);
	\draw (X1_2) -- (X2_2);
	\draw (X1_b) -- (X2_b);
	\draw (X1_1) -- (X1_2);
	\draw (X1_2) -- (X1_3);
	\draw (X1_3) -- (X1_b);
	
	\draw (Y1_1) -- (Y2_1);
	\draw (Y1_2) -- (Y2_2);
	\draw (Y1_b) -- (Y2_b);
	\draw (Y1_1) -- (Y1_2);
	\draw (Y1_2) -- (Y1_3);
	\draw (Y1_3) -- (Y1_b);
	
	\draw [dashed] (X1_b) -- (Y1_b);
	
	\node (x) at (-0.5,-4) {};
	\draw [<->] (x) -- (Y1_b) node [pos=0.5, below right] {\footnotesize\((\alpha-\beta+1)\)};
	\draw [<->] (x) -- (X1_b) node [pos=0.5, below left] {\footnotesize\((\alpha-\beta+1)\)};
	\end{tikzpicture}
	\]
	The lines connecting these generators indicate certain domains from which we compute the relative positions of all generators: For each \(\varepsilon=1,\dots,\beta\) and \(z\in\{x,y\}\), there is a bigon from \(z_{11}^\varepsilon\) to \(z_{21}^\varepsilon\) covering the puncture \(\TEIV\) once; these bigons correspond to the diagonal lines and preserve the \(\delta\)-grading. For every \(\varepsilon=1,\dots,\beta-1\), there are domains covering both punctures \(\TEI\) and \(\TEIV\) connecting \(x_{11}^\varepsilon\) to \(x_{11}^{\varepsilon+1}\) and \(y_{11}^{\varepsilon+1}\) to \(x_{11}^{\varepsilon}\); these correspond to the solid horizontal lines and also preserve the \(\delta\)-grading.  Finally, there is a domain from \(x_{11}^\beta\) to \(y_{11}^{\beta}\) covering the punctures \(\TEI\) and \(\TEIV\) \((\alpha-\beta+1)\) times; it corresponds to the dashed horizontal line in the centre of the diagram above and changes the \(\delta\)-grading by one.
	Note that the other half of intersection points, i.e.\ the generators \(\{z_{k2}^\varepsilon\}\), is obtained by shifting the generators \(\{z_{k1}^\varepsilon\}\) by one unit in the \(e_{\TEI}\)-direction.  In particular, the only pair of generators \((x_{ij}^\varepsilon,y_{k\ell}^{\nu})\) that share the same Alexander grading is \((x_{22}^\beta,y_{11}^\beta)\), and this only if \(\alpha=\beta\).
	 
	There are no immersed annuli with non-negative multiplicities in this diagram, which can be seen, for example, by considering the induced boundary orientation on \(\textcolor{blue}{\Special_\beta(0;\TEIV,\TEI)}\) near the puncture \(\TEI\). Therefore, the intersection points generate the complex whose homology is \(\HF(\textcolor{red}{\Special_\alpha(0;\TEIV,\TEI)},\textcolor{blue}{\Special_\beta(0;\TEIV,\TEI)})\).  To compute the differentials, observe that the \(\delta\)-grading is constant on the intersection points \(x_{k\ell}^\varepsilon\), and so is the \(\delta\)-grading on \(y_{k\ell}^\varepsilon\).
	So for grading reasons, only the generators \(x_{22}^\beta\) and \(y_{11}^\beta\) can be connected by a potentially non-vanishing differential, and this only if \(\alpha=\beta\). In this case, there are in fact two bigons between these two intersection points, so the differential always vanishes.  
	(In fact, for \(\alpha=\beta\), the generator \(x_{22}^\beta\) corresponds to the identity morphism of \(\textcolor{red}{\Special_\alpha(0;\TEIV,\TEI)}=\textcolor{blue}{\Special_\beta(0;\TEIV,\TEI)}\) in the Fukaya category.)
	
	Suppose the punctures \(\TEI\) and \(\TEIV\) are oriented differently, i.e.\ we quotient \(\GeneralAlex\) by \(e_{\TEI}+e_{\TEIV}\) when passing to the univariate Alexander grading. 
	Then these generators are concentrated in only two Alexander gradings, so \(\HFr(\textcolor{red}{\Special_\alpha(0;\TEIV,\TEI)},\textcolor{blue}{\Special_\beta(0;\TEIV,\TEI)})\) is not skeletal. 
	Hence the punctures are oriented in the same direction, i.e.\ we quotient \(\GeneralAlex\) by \(e_{\TEI}-e_{\TEIV}\). 
	In this case,  \(\HFr(\textcolor{red}{\Special_\alpha(0;\TEIV,\TEI)},\textcolor{blue}{\Special_\beta(0;\TEIV,\TEI)})\) has the desired form.  
\end{proof}

\begin{proof}[Proof of Lemma~\ref{lem:computation:mixed}]
	Figure~\ref{fig:computation:mixed} shows all intersection points \(x_\varepsilon\), \(\varepsilon=1,\dots,4\alpha\), between these curves in minimal position.
	Observe that there is a 
	bigon from \(x_{2\alpha+1-\varepsilon}\) to \(x_{2\alpha+\varepsilon}\) 
	covering the puncture \(\TEI\) for all \(\varepsilon=1,\dots,2\alpha\).
	Therefore \(\HFr(\textcolor{red}{\Special_\alpha(0;\TEIV,\TEI)},\textcolor{blue}{\Rational(\tfrac{1}{n})})\) is isomorphic to the relatively bigraded vector space generated by \(\{x_{2\alpha+\varepsilon}\}_{\varepsilon=1,\dots,2\alpha}\).  
	We claim that all generators have the same \(\delta\)-grading and their Alexander grading is as follows:	
	\[
	\begin{tikzpicture}[xscale=0.85,yscale=0.425,decoration=brace]
	\draw [->] (-9.25,-1.5) -- (-7.5,2) node [above] {\(\Z e_{\TEIV}\subset\GeneralAlex\)};
	\draw [->] (-9.25,-0.5) -- (-7.5,-4) node [below] {\(\Z e_{\TEI}\subset\GeneralAlex\)};
	
	\node (X8) at (-8,-1) {$x_{4\alpha}$};
	\node (X7) at (-7,+1) {$x_{4\alpha-1}$};
	\node (X6) at (-6,-1) {$x_{4\alpha-2}$};
	\node (X5) at (-5,+1) {$x_{4\alpha-3}$};
	\node (X4) at (-4,-1) {};
	\node (X3) at (-3,+1) {};
	\node (X2) at (-2,-1) {$x_{2\alpha+2}$};
	\node (X1) at (-1,+1) {$x_{2\alpha+1}$};
	
	\draw (X1) -- (X2);
	\draw [dashed] (X2) -- (X3);
	\draw (X3) -- (X4);
	\draw [dashed] (X4) -- (X5);
	\draw (X5) -- (X6);
	\draw [dashed] (X6) -- (X7);
	\draw (X7) -- (X8);
	\end{tikzpicture}
	\]
	As in the proof of Lemma~\ref{lem:computation:specials}, the solid and dashed lines between the generators in this picture correspond to certain domains: The solid lines represent bigons from \(x_{2\alpha+2\varepsilon}\) to \(x_{2\alpha+2\varepsilon-1}\) 
	covering the puncture \(\TEIV\), where \(\varepsilon=1,\dots,\alpha\). The dashed lines correspond to the bigons from \(x_{2\alpha+2\varepsilon+1}\) to \(x_{2\alpha+2\varepsilon}\) 
	covering the puncture \(\TEI\), where \(\varepsilon=1,\dots,\alpha-1\). 
	Note that all generators sit in the same \(\delta\)-grading by Remark~\ref{rem:delta:HF}.
	Now, if the two punctures \(\TEIV\) and \(\TEI\) are oppositely oriented and \(\alpha>1\), \(\HFr(\textcolor{red}{\Special_\alpha(0;\TEIV,\TEI)},\textcolor{blue}{\Rational(\tfrac{1}{n})})\) is not skeletal. Otherwise, the vector space is contiguous. 
\end{proof}

\begin{lemma}\label{lem:reduction:specials}
	For any \(\beta\in\Z^{>0}\), \(s\in\QPI\smallsetminus\{0\}\), and \(\TEi,\TEj\in\{\TEI,\TEII,\TEIII,\TEIV\}\),  \(\HFr(\textcolor{red}{\Special_\alpha(0;\TEIV,\TEI)},\textcolor{blue}{\Special_\beta(s;\TEi,\TEj)})\) is not skeletal.
\end{lemma}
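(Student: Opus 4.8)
The strategy is to compute the Lagrangian Floer homology $\HF(\textcolor{red}{\Special_\alpha(0;\TEIV,\TEI)},\textcolor{blue}{\Special_\beta(s;\TEi,\TEj)})$ from an explicit curve diagram, using the same template as in the proofs of Lemmas~\ref{lem:computation:specials} and~\ref{lem:computation:mixed}, and to show that in every case two of the resulting generators share the same relative Alexander grading; by definition this contradicts skeletality. Before drawing diagrams I would cut down the number of cases. By naturality of $\HFT$ under twisting (Theorem~\ref{thm:HFT:Twisting}) we may apply any power of the Dehn twist along a slope-$0$ curve: this fixes $\textcolor{red}{\Special_\alpha(0;\TEIV,\TEI)}$ and changes the slope $s=p/q$ of $\textcolor{blue}{\Special_\beta(s;\TEi,\TEj)}$ to $p/(q-np)$, hence normalises $s$ within its coset. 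Together with Observation~\ref{obs:Alex:reverse_order} (swapping the two curves negates the Alexander grading, and so preserves skeletality) and conjugation symmetry (Theorem~\ref{thm:Conjugation}), this reduces us to a short list of model configurations, organised by how the unordered pair $\{\TEi,\TEj\}$ meets $\{\TEI,\TEIV\}$: it is equal to $\{\TEI,\TEIV\}$, or equal to the ``parallel'' pair $\{\TEII,\TEIII\}$, or meets $\{\TEI,\TEIV\}$ in exactly one puncture. In the first two cases the constraint $s\neq 0$ forces $s$ to be a non-zero fraction with even numerator (e.g.\ $s=2$), and in the last case a convenient representative is $s=\infty$ or $s=1$.

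In each of these cases I would draw the two curves in minimal position, exactly as in Figure~\ref{fig:computation:specials} and the analogous diagram used for Lemma~\ref{lem:computation:mixed}. Near each of the two punctures around which $\textcolor{red}{\Special_\alpha(0;\TEIV,\TEI)}$ winds there is a family of bigons covering a single puncture; by Lemma~\ref{lem:Alex:bigon} and Remark~\ref{rem:delta:HF} the two generators joined by such a bigon differ by one in univariate Alexander grading and lie in a common $\delta$-grading, so half of the generators may be discarded and one works with the resulting reduced complex. The relative $(\Alex,\delta)$-positions of the surviving generators are then obtained by writing down, between consecutive survivors, explicit connecting domains --- bigons over one puncture, ``square'' pieces bounded by the parametrisation, and domains covering two punctures --- and applying Proposition~\ref{prop:Alex:HFT:Euler:multiple-curves}. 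As in Lemma~\ref{lem:computation:specials} the survivors assemble into two ``strands''; the new phenomenon is that, because $\textcolor{blue}{\Special_\beta(s;\TEi,\TEj)}$ has slope different from $0$, the domain relating the two strands covers a puncture pair transverse to $\{\TEI,\TEIV\}$, and a short count shows that the two strands now overlap in at least one Alexander grading --- indeed in $\GeneralAlex$, so the overlap is independent of the orientations of the punctures. A $\delta$-grading argument identical to the one in Lemma~\ref{lem:computation:specials} then shows that the differential of the reduced complex cannot cancel these two generators, so $\HFr(\textcolor{red}{\Special_\alpha(0;\TEIV,\TEI)},\textcolor{blue}{\Special_\beta(s;\TEi,\TEj)})$ has rank at least two in that Alexander grading.

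I expect the main obstacle to be the diagrammatic bookkeeping rather than any conceptual point, and in particular making the overlap of the two strands manifest in every case. The comparison with Lemma~\ref{lem:computation:specials} is the guiding principle: there the two strands lie near Alexander gradings $\pm\alpha$ and are kept disjoint precisely by the hypothesis $\beta\le\alpha$, whereas here the separating domain is tied to a puncture pair which is not aligned with $\{\TEI,\TEIV\}$, and one should locate in each diagram a connecting domain between the two strands whose Alexander class vanishes in $\GeneralAlex$ --- concretely a domain covering the punctures at most once and covering the two punctures in each matched pair the same number of times --- whose two endpoints are then two generators of equal Alexander grading. Carrying this out uniformly across the few puncture-pair types, while keeping track of the handful of ``boundary'' survivors where the strands meet and of the parity of $\beta$ (as in Figure~\ref{fig:computation:specials}), is where the work lies.
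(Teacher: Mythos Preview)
Your plan heads in a workable direction but misses the key simplification and, as written, does not actually reduce to finitely many diagrams. Dehn twisting along the slope-$0$ curve changes $s=\tfrac{p}{q}$ to $\tfrac{p}{q+np}$; this normalises $q$ modulo $p$ but leaves the numerator $p$ free, so ``$s=2$'', ``$s=\infty$'', ``$s=1$'' are merely sample slopes, not a complete list. A uniform ``two-strand'' analysis across all remaining $p$ would still have to be carried out, and the global minimal-position pictures get arbitrarily complicated as $|p|$ grows. So the proposal, as it stands, is a sketch of an infinite case check rather than a proof.

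The paper avoids all of this with a local argument. One only needs that some pair of parallel strands of $\textcolor{blue}{\Special_\beta(s;\TEi,\TEj)}$ crosses the short horizontal arc $a$ joining the punctures $\TEI$ and $\TEIV$; this is guaranteed whenever $s\neq 0$ (with a small reinterpretation of the figure-eight when $s=\tfrac1n$ and $\{\TEi,\TEj\}\neq\{\TEI,\TEIV\}$). Near that crossing the picture is exactly Figure~\ref{fig:reduction}: the red lobe of $\textcolor{red}{\Special_\alpha(0;\TEIV,\TEI)}$ meets the two blue strands in eight points $x_{k\ell},y_{k\ell}$, each pair $(x_{k\ell},y_{k\ell})$ joined by a bigon over a single puncture. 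Passing to $\HFr$ leaves a four-dimensional summand which is relatively bigraded isomorphic to $V^{\otimes 2}$, and $V^{\otimes 2}$ already has two generators in the same Alexander grading. No case analysis on $\{\TEi,\TEj\}$, no global diagram, no comparison of ``two strands'' is needed: the non-skeletal summand is produced locally and uniformly.
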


\begin{lemma}\label{lem:reduction:mixed}
	Let \(s\in\QPI\smallsetminus\left(\{0\}\cup\{\tfrac{1}{n}\mid n\in\Z\}\right)\). Then  \(\HFr(\textcolor{red}{\Special_\alpha(0;\TEIV,\TEI)},\textcolor{blue}{\Rational(s)})\) is not skeletal.
\end{lemma}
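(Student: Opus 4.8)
The plan is to compute $\HFr(\textcolor{red}{\Special_\alpha(0;\TEIV,\TEI)},\textcolor{blue}{\Rational(s)})$ directly in the covering space $\PuncturedPlane$, in the style of the proofs of Lemmas~\ref{lem:computation:specials} and~\ref{lem:computation:mixed}, and to exhibit two surviving generators in a common Alexander grading. Write $s=\tfrac{p}{q}$ in lowest terms; the hypothesis $s\notin\{0\}\cup\{\tfrac1n\mid n\in\Z\}$ is exactly the condition $|p|\ge2$. First I would fix lifts: $\textcolor{blue}{\Rational(s)}$ lifts to a straight line $L\subset\PuncturedPlane$ of slope $\tfrac pq$ avoiding the integer lattice, while $\textcolor{red}{\Special_\alpha(0;\TEIV,\TEI)}$ lifts to a curve $\textcolor{red}{\tilde S}$ contained in an arbitrarily thin neighbourhood of the horizontal lattice line through the punctures $\tilde\TEIV$ and $\tilde\TEI$, with its characteristic $\alpha$-fold winding near those two punctures. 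After putting $L$ and $\textcolor{red}{\tilde S}$ in minimal position, I would enumerate the intersection points and single out, as in the earlier lemmas, the bigons covering a single puncture ($\TEI$ or $\TEIV$): by Lemma~\ref{lem:Alex:bigon} and Remark~\ref{rem:delta:HF} each such bigon pairs two generators into a copy of $V$, so discarding one generator from each pair yields a homogeneous basis for $\HFr$. As usual one checks that there are no immersed annuli of non-negative multiplicity, for instance from the induced boundary orientation of $\textcolor{red}{\tilde S}$ near $\TEI$, so that these intersection points generate the relevant complex.

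The heart of the argument is the grading computation, done with the connecting-domain calculus of Subsection~\ref{sec:review:HFT:Alex} (Lemma~\ref{lem:Alex:HFT:Euler:two_curves} and Proposition~\ref{prop:Alex:HFT:Euler:multiple-curves}) applied to the elementary single-puncture bigons and to the ``long'' domains cut out by $L$ sweeping across several lattice columns. When $|p|=1$, the line $L$ meets each horizontal lattice line once per period of $\textcolor{red}{\tilde S}$, and the univariate Alexander grading turns out to be strictly monotone along the surviving generators, giving $\HFr\cong\Contiguous{2\alpha}$ as in Lemma~\ref{lem:computation:mixed}. When $|p|\ge2$, $L$ meets each horizontal lattice line at least twice per period, so the multiplicities of the long domains at $\tilde\TEI$ and $\tilde\TEIV$ no longer vary monotonically; concretely I would produce two surviving generators $x\neq x'$ with $\Alex(x)=\Alex(x')\in\GeneralAlex$, hence $A(x)=A(x')$ whether one quotients $\GeneralAlex$ by $e_{\TEI}-e_{\TEIV}$ or by $e_{\TEI}+e_{\TEIV}$, i.e.\ for either relative orientation of the punctures $\TEI$ and $\TEIV$. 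Using Remark~\ref{rem:delta:HF} for the single-puncture bigons, together with explicit $\delta$-bookkeeping for the long domains as in the proof of Lemma~\ref{lem:computation:specials}, I would also arrange that either $\delta(x)=\delta(x')$ or there are an even number of bigons between $x$ and $x'$; in either case no differential of $\HF(\textcolor{red}{\Special_\alpha(0;\TEIV,\TEI)},\textcolor{blue}{\Rational(s)})$ connects them, so $\HFr$ has rank at least two in the Alexander grading of $x$ and is therefore not skeletal.

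The step I expect to be the main obstacle is making the enumeration of $L\cap\textcolor{red}{\tilde S}$ and the subsequent grading bookkeeping uniform in the slope $\tfrac pq$, rather than verifying only small cases: one needs a description of $\textcolor{red}{\tilde S}$ and of the long domains valid for all $p,q$, and one must locate a colliding pair of surviving generators that works for every $|p|\ge2$, every $\alpha\in\Z^{>0}$, and both relative orientations of $\TEI$ and $\TEIV$. I would organise this by choosing a fundamental domain for the $\Z^2$-action adapted to $\textcolor{red}{\tilde S}$ and arguing that the $\alpha$-fold winding forces at least one ``fold'' of the Alexander grading as soon as $|p|\ge2$. A possible alternative is a counting argument, since $\dim\HFr$ grows linearly in $|p|$ whereas a skeletal module would need its Alexander support at least that wide; but bounding the width of the potential support appears to require the same case analysis, so I do not expect it to shorten the proof.
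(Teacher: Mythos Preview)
Your plan to compute the full Lagrangian Floer homology in the covering space is in principle viable, but it takes a far harder route than necessary, and the step you yourself flag as the main obstacle---making the enumeration and grading bookkeeping uniform over all slopes $\tfrac{p}{q}$ with $|p|\ge 2$, all $\alpha$, and both orientations of the punctures---is genuinely laborious to carry out cleanly. As written the proposal is a sketch: the existence of the colliding pair $x,x'$ and the claim that no differential connects them are asserted rather than established.

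The paper's proof sidesteps all of this with a local observation. One does not need to compute $\HFr(\Special_\alpha(0;\TEIV,\TEI),\Rational(s))$ globally: it suffices to exhibit a single non-skeletal \emph{summand}. Think of $\Rational(s)$ as the boundary of a tubular neighbourhood of an embedded arc $b$ in $\FourPuncturedSphere$ connecting two punctures, at least one of which can be taken different from $\TEI,\TEIV$. The hypothesis $s\notin\{0\}\cup\{\tfrac1n\mid n\in\Z\}$ is exactly what guarantees that $b$ meets the horizontal arc $a$ between $\TEI$ and $\TEIV$ nontrivially. Near such a crossing, the two parallel strands of $\Rational(s)$ run across the two parallel strands of $\Special_\alpha(0;\TEIV,\TEI)$ lying along $a$, producing four intersection points $x_{k\ell}$, $k,\ell\in\{1,2\}$, each paired by a single-puncture bigon with a companion $y_{k\ell}$. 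The resulting four-dimensional summand of $\HFr$ is relatively bigraded isomorphic to $V^{\otimes 2}$, hence not skeletal---regardless of the orientations of $\TEI$ and $\TEIV$. No global enumeration, no long-domain bookkeeping, and no case analysis in $(p,q,\alpha)$ are required.

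So the idea you are missing is that non-skeletality is a local property: a single transverse crossing of the core arc of $\Rational(s)$ with the core arc of the special curve already produces a $V^{\otimes 2}$ summand and finishes the lemma.
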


\begin{figure}[tb]
	\centering
	\begin{subfigure}{0.54\textwidth}
		\centering
		\(\PairingIrrRat\)
		\caption{\(\HFr(\textcolor{red}{\Special_\alpha(0;\TEIV,\TEI)},\textcolor{blue}{\Rational(\tfrac{1}{n})})\)}\label{fig:computation:mixed}
	\end{subfigure}
	\begin{subfigure}{0.4\textwidth}
		\centering
		\(\PairingIrrIrrNonparallel\)
		\caption{\(\HFr(\textcolor{red}{\Special_\alpha(0;\TEIV,\TEI)},\textcolor{blue}{\gamma})\)}\label{fig:reduction}
	\end{subfigure}
	\caption{Computations for the proof of (a) Lemma~\ref{lem:computation:mixed} and (b)  Lemmas~\ref{lem:reduction:specials} and~\ref{lem:reduction:mixed} }\label{fig:specials}
\end{figure}

\begin{proof}[Proof of Lemmas~\ref{lem:reduction:specials} and~\ref{lem:reduction:mixed}]
	Let \(\textcolor{blue}{\gamma}\) be either equal to \(\textcolor{blue}{\Special_\beta(s;\TEi,\TEj)}\) from Lemma~\ref{lem:reduction:specials} or  \(\textcolor{blue}{\Rational(s)}\) from Lemma~\ref{lem:reduction:mixed}. Let us put \(\textcolor{red}{\Special_\alpha(0;\TEIV,\TEI)}\) and \(\textcolor{blue}{\gamma}\) into minimal position. We claim that there are portions of \(\textcolor{red}{\Special_\alpha(0;\TEIV,\TEI)}\) and \(\textcolor{blue}{\gamma}\) that look like Figure~\ref{fig:reduction}, where the two curve segments of \(\textcolor{blue}{\gamma}\) stay parallel until they are joined up near some puncture. 
	To see this, let \(a\) be the horizontal arc connecting the punctures \(\TEI\) and \(\TEIV\). 
	If \(\textcolor{blue}{\gamma}\) is special, it is a figure-eight curve which sits in the neighborhood of some immersed arc connecting a puncture to itself. It suffices to see that this arc intersects \(a\) non-trivially. Suppose it does not. Then its slope is either 0 or \(\tfrac{1}{n}\) for some \(n\in\Z\). The first case contradicts our assumption on \(s\). In the second case, either \(\TEi\not\in\{\TEI,\TEIV\}\) or \(\TEj\not\in\{\TEI,\TEIV\}\). Without loss of generality, let us assume the former. Then we may reinterpret \(\textcolor{blue}{\gamma}\) as a figure-eight curve which sits in the neighborhood of a different immersed arc of the same slope as the first, but one which starts and ends at the puncture \(\TEi\neq\TEI,\TEIV\). This new arc intersects \(a\) non-trivially.
	A similar argument applies if \(\textcolor{blue}{\gamma}\) is rational. In this case, we can think of \(\textcolor{blue}{\gamma}\) as the boundary of a tubular neighborhood of an embedded arc, which connects a pair of distinct punctures. We may choose this arc such that at least one of its ends is different from \(\TEI\) and \(\TEIV\). Since in this case, \(s\neq\tfrac{1}{n}\) for all \(n\in\Z\) and \(s\neq0\) by assumption, this arc also intersects \(a\) non-trivially.
	%

	We now compute the Lagrangian Floer homology from this picture. 
	For any \(k,\ell\in\{1,2\}\), there are bigons from \(x_{k\ell}\) to \(y_{k\ell}\) covering a single puncture. So \(\HFr(\textcolor{red}{\Special_\alpha(0;\TEIV,\TEI)},\textcolor{blue}{\gamma})\) contains a summand which is isomorphic to a vector space generated by the four intersection points \(x_{k\ell}\), \(k,\ell\in\{1,2\}\). One can easily see that this summand is (relatively) bigraded isomorphic to \(V^{\otimes2}\), which is not skeletal. 
\end{proof}

\section{Proof of the Main Theorem}\label{sec:proof-main-thm}

Throughout this section, we fix a decomposition of a knot \(K=T_1\cup T_2\) into two four-ended tangles. 
We will write \(\Gammai\coloneqq\mr(\HFT(T_1))\) and \(\Gammaii\coloneqq\HFT(T_2)\). 
The proof of the Main Theorem is based on the following two propositions together with Theorem~\ref{thm:detection:split}.

\begin{proposition}\label{prop:NoLspaceKnot:IrrIrr}
	Suppose both \(\Gammai\) and \(\Gammaii\) contain a special component. Then \(K\) is not an L-space knot. 
\end{proposition}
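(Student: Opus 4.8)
The plan is to assume that $K$ is an L-space knot and to derive a contradiction from the pairing computations of Section~\ref{sec:pairings} together with the rigidity of the knot Floer homology of L-space knots recorded in Section~\ref{sec:review:lspace-knots}. First I would normalize the two multicurves. By Theorem~\ref{thm:geography_of_HFT} only rational components carry non-trivial local systems, and by Corollary~\ref{cor:pairing-knots:local_systems} replacing these by trivial local systems of the same dimension changes neither $\HFKhat(K)$ nor whether the relevant summands are skeletal; so I may assume $\Gammai$ and $\Gammaii$ are disjoint unions of curves $\Rational(\cdot)$ and $\Special_\bullet(\cdot\,;\cdot,\cdot)$. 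Since the Lagrangian Floer homology of two curves depends only on the diffeomorphism type of the triple, applying one mapping class of $\FourPuncturedSphere$ to both $\Gammai$ and $\Gammaii$ leaves $\HFr(\Gammai,\Gammaii)\cong\HFKhat(K)$ unchanged; using Theorem~\ref{thm:HFT:Twisting}, the transitivity of the $\PSL_2(\Z)$-action on slopes, and conjugation symmetry (Theorem~\ref{thm:Conjugation}), I may then assume that $\Gammai$ contains the component $\textcolor{red}{\Special_\alpha(0;\TEIV,\TEI)}$ for some $\alpha\in\Z^{>0}$. As Theorem~\ref{thm:Conjugation} applies equally to $\HFT(\mr(T_1))$, the special components of $\Gammai$ occur in conjugate pairs, so $\Gammai$ still has a special component after $\textcolor{red}{\Special_\alpha(0;\TEIV,\TEI)}$ is deleted.

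Next I would exploit skeletality. If $K$ is an L-space knot then $\HFKhat(K)$ is skeletal (Example~\ref{ex:LspaceKnot_are_skeletal}), so $\HF(\Gammai,\Gammaii)\cong\HFKhat(K)\otimes V$ has rank at most $2$ in every Alexander grading, and hence so does each direct summand $\HF(c_1,c_2)$, where $c_1,c_2$ range over the components of $\Gammai,\Gammaii$. Now let $\textcolor{blue}{\Special_\beta(s;\TEi,\TEj)}$ be a special component of $\Gammaii$, which exists by hypothesis. Suppose $s\neq0$. The bigons covering the puncture $\TEI$ of the figure-eight $\textcolor{red}{\Special_\alpha(0;\TEIV,\TEI)}$ make $\HF(\textcolor{red}{\Special_\alpha(0;\TEIV,\TEI)},\textcolor{blue}{\Special_\beta(s;\TEi,\TEj)})$ divisible by $V$, and the proof of Lemma~\ref{lem:reduction:specials} exhibits a direct summand $V^{\otimes2}$ of the quotient by $V$, hence a direct summand $V^{\otimes3}$ of $\HF(\textcolor{red}{\Special_\alpha(0;\TEIV,\TEI)},\textcolor{blue}{\Special_\beta(s;\TEi,\TEj)})$ itself; since $V^{\otimes3}$ has rank $3$ in an Alexander grading, this contradicts the rank bound. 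Therefore $s=0$. By conjugation symmetry $\Gammaii$ also contains $\textcolor{blue}{\Special_\beta(0;\TEIV,\TEI)}$ (replacing the chosen component by its conjugate if it was $\textcolor{blue}{\Special_\beta(0;\TEII,\TEIII)}$), and after exchanging the roles of $T_1$ and $T_2$ if necessary --- legitimate because $T_1\cup T_2=T_2\cup T_1$ and this only reverses the Alexander grading --- I may assume $\beta\leq\alpha$.

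It remains to analyze $\HF(\textcolor{red}{\Special_\alpha(0;\TEIV,\TEI)},\textcolor{blue}{\Special_\beta(0;\TEIV,\TEI)})$. By Lemma~\ref{lem:computation:specials} and its proof, either the punctures $\TEI$ and $\TEIV$ are oppositely oriented in $T_2$, in which case this space carries $4\beta\geq4$ generators in a single Alexander grading, again contradicting the rank bound; or
\[
\HF(\textcolor{red}{\Special_\alpha(0;\TEIV,\TEI)},\textcolor{blue}{\Special_\beta(0;\TEIV,\TEI)})\;\cong\;\bigl(\delta^{0}t^{-\alpha}\Contiguous{2\beta}\oplus\delta^{\pm1}t^{\alpha}\Contiguous{2\beta}\bigr)\otimes V .
\]
Since this is a direct summand of $\HFKhat(K)\otimes V$ and $\HFKhat(K)$ is skeletal, a rank count (comparing, in each fixed $\delta$-grading, the summand $\Contiguous{2\beta}\otimes V$ --- whose profile has a rank-$2$ plateau over $2\beta-1$ consecutive Alexander gradings --- with $\bigoplus_{\delta_k \text{ fixed}} x_k\otimes V$) shows that $\HFKhat(K)$ itself contains a block $B_1$ of $2\beta$ generators occupying consecutive Alexander gradings in a single $\delta$-grading $\delta_0$, together with a second block $B_2$ of $2\beta$ generators occupying $2\beta$ higher consecutive Alexander gradings in the single $\delta$-grading $\delta_0\pm1$, the top of $B_1$ and the bottom of $B_2$ differing in Alexander grading by $2(\alpha-\beta)+1$.

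The final step --- which I expect to be the main obstacle --- is to show that no L-space knot's knot Floer homology can contain such a configuration. Because $\HFKhat(K)$ is skeletal, generators in consecutive Alexander gradings are consecutive in the index ordering of Corollary~\ref{cor:lspace-ordered-short}, so each of $B_1,B_2$ consists of consecutively indexed generators. If $\alpha=\beta$, the top generator of $B_1$ and the bottom generator of $B_2$ lie in consecutive Alexander gradings and hence in consecutive indices; but then Corollary~\ref{cor:gaps} forces their $\delta$-gradings to agree, contradicting that they differ by $\pm1$. If $\alpha>\beta$, one must first rule out the possibility that $\HFKhat(K)$ has generators at the $2(\alpha-\beta)$ Alexander gradings strictly between the two blocks; I would do this by applying Lemma~\ref{lem:convex} to a carefully chosen odd-rank bigraded subspace built from $B_1$ and $B_2$ that is (via Lemma~\ref{lem:lspacestructure}) relatively bigraded isomorphic to the knot Floer homology of an L-space knot, whose convexity in $\HFKhat(K)$ then forces the top of $B_1$ and the bottom of $B_2$ to be consecutive in index; Corollary~\ref{cor:gaps} now gives $1=\bigl(2(\alpha-\beta)+1\bigr)-1=2(\alpha-\beta)$, which is impossible. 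This contradiction shows that $K$ cannot be an L-space knot.
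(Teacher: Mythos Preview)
Your normalization and the handling of the case \(\alpha=\beta\) are fine, and your identification of the two contiguous blocks \(B_1,B_2\subset\HFKhat(K)\) is correct. The gap is in the final step for \(\alpha>\beta\).

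You propose to apply Lemma~\ref{lem:convex} to an ``odd-rank bigraded subspace \(W\) built from \(B_1\) and \(B_2\)'' and to certify via Lemma~\ref{lem:lspacestructure} that \(W\) is relatively bigraded isomorphic to \(\HFKhat\) of an L-space knot. Neither step can be carried out. First, Lemma~\ref{lem:lspacestructure} is a statement about an actual knot \(K\): it does not produce a knot \(J\) from an abstract bigraded vector space, so it cannot be used to manufacture the \(J\) required in the hypothesis of Lemma~\ref{lem:convex}. Second, and more fundamentally, no subspace \(W\subseteq B_1\cup B_2\) containing generators on both sides of the gap can be bigraded isomorphic to \(\HFKhat\) of an L-space knot: if the top of \(B_1\) and the bottom of \(B_2\) are adjacent in the index ordering of \(W\), then the first part of Corollary~\ref{cor:gaps} applied to the putative \(J\) forces \(|\Delta\delta|=|\Delta A|-1=2(\alpha-\beta)\), while you have \(|\Delta\delta|=1\). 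This is precisely the equality you are trying to derive as a contradiction about \(K\), so the argument is circular. In fact the conclusion you are aiming for---that \(\HFKhat(K)\) has no generators in the gap---is false: a short Maslov-parity count (as in the paper) shows there are an \emph{odd} number of generators there.

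The paper's proof avoids this by bringing in the other summands of \(\HFr(\Gammai,\Gammaii)\) rather than working inside \(B_1\cup B_2\). Since the special--special and special--rational pairings in the gap are all even-dimensional while the total number of generators there is odd, some rational--rational summand \(\HFr(\textcolor{red}{\gamma},\textcolor{blue}{\gamma'})\) must land strictly between \(B_1\) and \(B_2\). The subspace \(W=\HFr(\Gammai,\textcolor{blue}{\gamma'})\) is then, via the Gluing Theorem, genuinely equal to \(\HFKhat(T_1\cup Q_{s'})\), and Lemma~\ref{lem:lspacepinch} shows this is an honest L-space knot; Corollary~\ref{cor:Lspacepinch} now legitimately supplies the convexity input. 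The contradiction comes because \(\HFr(\textcolor{red}{\Special_\alpha(0;\TEIV,\TEI)},\textcolor{blue}{\gamma'})\subset W\) is a contiguous block of length \(2\alpha>2(\alpha-\beta)\), hence forced (by skeletality) entirely outside the span of \(B_1\cup B_2\), while \(\HFr(\textcolor{red}{\gamma},\textcolor{blue}{\gamma'})\subset W\) sits strictly inside---violating convexity. The point you are missing is that the required L-space knot for Lemma~\ref{lem:convex} must be produced \emph{geometrically} from a rational closure, not algebraically from \(B_1\cup B_2\).
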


\begin{proposition}\label{prop:NoLspaceKnot:IrrRat}
	Suppose \(T_1\) is not rational and \(\Gammaii\) consists entirely of multiple rational components not all of the same slope. Then \(K\) is not an L-space knot.  
\end{proposition}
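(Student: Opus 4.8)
The plan is to argue by contradiction: assuming \(K\) is an L-space knot, I would show that \(\HFKhat(K,0)\) is forced to have dimension at least \(m\ge 2\), which is impossible. So suppose \(K\) is an L-space knot. Then \(\HFKhat(K)\) is skeletal (Example~\ref{ex:LspaceKnot_are_skeletal}), and by Theorem~\ref{thm:lspace-ordered-stronger} (together with the Alexander symmetry for negative L-space knots) we have \(A_0=0\), so \(\dim\HFKhat(K,0)=1\). First I would use Corollary~\ref{cor:pairing-knots:local_systems} to replace every local system by a trivial one of the same dimension; after this reduction \(\Gammaii=\HFT(T_2)\) is a disjoint union of embedded rational curves. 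Let \(s_1,\dots,s_m\in\QPI\) be the distinct slopes occurring, with multiplicities \(d_1,\dots,d_m\ge 1\); by hypothesis \(m\ge 2\).

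The crux is the decomposition
\[
\HFKhat(K)\;\cong\;\bigoplus_{j=1}^{m} d_j\cdot\HFKhat(T_1\cup Q_{s_j}),
\]
which I would establish as follows. By Lemma~\ref{lem:HFT_detects_connectivity} each rational component of \(\HFT(T_2)\) joins the four tangle ends the way \(T_2\) does, which — since \(K=T_1\cup T_2\) is a knot — is complementary to the way \(T_1\) joins them; hence each \(T_1\cup Q_{s_j}\) is again a knot. Theorem~\ref{thm:GlueingTheorem:HFT:HFr} gives \(\HFKhat(K)\cong\HFr(\Gammai,\Gammaii)\); since Lagrangian Floer homology (and hence \(\HFr\)) is additive over disjoint unions in the second argument and \(\HFT(Q_{s_j})=\Rational(s_j)\), applying Theorem~\ref{thm:GlueingTheorem:HFT:HFr} again to each pairing \(T_1\cup Q_{s_j}\) yields the displayed identity. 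The point requiring care is that this is an isomorphism of \emph{absolutely} Alexander-graded vector spaces, not merely relatively graded ones: under the grading conventions of~\cite{PQMod} the curve-pairing Alexander grading recovers the canonical Alexander grading on link Floer homology, so the summand coming from the component \(\Rational(s_j)\) is \(\HFKhat(T_1\cup Q_{s_j})\) with its canonical grading, placed symmetrically about Alexander grading \(0\).

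Granting this, the contradiction is immediate. Each \(\HFKhat(T_1\cup Q_{s_j})\) is a direct summand of the skeletal space \(\HFKhat(K)\), hence is itself skeletal; on the other hand, being the knot Floer homology of a knot in \(S^3\), it has odd total dimension and is supported in Alexander gradings symmetric about \(0\). A skeletal, odd-dimensional, Alexander-symmetric graded space must be nonzero in Alexander grading \(0\), for otherwise its set of nonzero Alexander gradings would split into pairs \(\{s,-s\}\) and its total dimension would be even. Thus \(\HFKhat(T_1\cup Q_{s_j},0)\neq 0\) for every \(j\), and the decomposition gives \(\dim\HFKhat(K,0)\ge\sum_{j=1}^m d_j\ge m\ge 2\), contradicting \(\dim\HFKhat(K,0)=1\).

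The step I expect to be the main obstacle is precisely the bookkeeping of absolute Alexander gradings in the decomposition. An alternative that sidesteps it: argue via Lemma~\ref{lem:lspacestructure} — using that the Maslov and Alexander orders inherited from the L-space knot \(K\) agree — that each \(T_1\cup Q_{s_j}\) is itself an L-space knot, so by Lemma~\ref{lem:convex} each summand \(\HFKhat(T_1\cup Q_{s_j})\subsetneq\HFKhat(K)\) is an interval in the Alexander ordering; the conjugation symmetry of \(\HFKhat(K)\), being induced by the conjugation symmetry of the tangle invariants (which fixes every rational component of \(\HFT(T_2)\)), then preserves the decomposition and hence fixes each such interval while reversing it, forcing every interval to be centered at Alexander grading \(0\) — impossible for \(m\ge 2\) disjoint intervals.
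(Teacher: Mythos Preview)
Your main argument has a genuine gap precisely where you anticipated it. The immersed curve invariant \(\HFT(T)\) carries only a \emph{relative} Alexander grading (see Subsection~\ref{sec:review:HFT:Bigrading}: ``the grading is well-defined up to a constant''), and the gluing theorem is accordingly a relatively bigraded isomorphism. When you split \(\Gammaii\) into its components \(D_k\), each \(\HFr(\Gammai,D_k)\) is isomorphic to \(\HFKhat(T_1\cup Q_{s_j})\) only up to an overall bigrading shift that depends on how \(D_k\) sits inside \(\HFT(T_2)\), not on \(Q_{s_j}\). Concretely, \(\HFT(T_2)\) can contain two rational components of the \emph{same} slope sitting in different Alexander gradings; their pairings with \(\Gammai\) are then two copies of the same vector space shifted relative to one another, so at most one can coincide with the canonically graded \(\HFKhat(T_1\cup Q_{s_j})\). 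Your assertion that each summand is ``placed symmetrically about Alexander grading \(0\)'' is therefore unjustified, and without it the parity count collapses.

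Your alternative route has the same defect in a different guise. The conjugation symmetry quoted in this paper (Theorem~\ref{thm:Conjugation}) concerns special components; it does not say that each rational component of \(\HFT(T_2)\) is fixed as a \emph{graded} curve, nor that the Alexander symmetry of \(\HFKhat(K)\) respects the decomposition by components of \(\Gammaii\). Conjugation could permute rational components of the same slope, and even if the slope-class is globally preserved, a union of disjoint intervals symmetric about \(0\) need not contain \(0\). So the step ``forcing every interval to be centered at Alexander grading \(0\)'' is not established. (Note also that neither of your arguments uses the hypothesis that \(T_1\) is not rational; this is at least a warning sign.)

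The paper avoids absolute gradings entirely. Using the domain calculus of Subsection~\ref{sec:review:HFT:Alex} (specifically Lemma~\ref{lem:Alex:ordering}), it proves an \emph{ordering} lemma: for a fixed rational component \(\CC{1}\) of \(\Gammai\) and any other component \(\CC{2}\), the inequality \((2,k)<(1,k)\) implies \((2,\ell)<(1,\ell)\). A short case analysis then always produces three summands \(W_{i_1,j_1}\), \(W_{i_2,j_2}\), \(W_{i_3,j_3}\) with \(W_{i_2,j_2}\) sandwiched in Alexander grading between the other two, where either the \(i\)'s or the \(j\)'s of the outer pair agree; this violates the convexity of \(\HFr(\Gammai,\DD{j})\) or \(\HFr(\CC{i},\Gammaii)\) guaranteed by Corollary~\ref{cor:Lspacepinch} and Lemma~\ref{lem:convex}. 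The non-rationality of \(T_1\) is what supplies the second component \(\CC{2}\).
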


\begin{proof}[Proof of the Main Theorem]
	Suppose that an L-space knot $K$ can be expressed in terms of a tangle decomposition $K = T_1\cup T_2$ along an essential Conway sphere, where \(T_1\) and \(T_2\) are four-ended tangles. 
	Then by Proposition~\ref{prop:NoLspaceKnot:IrrIrr}, either \(\Gammai\) or  \(\Gammaii\) contains only rational components. 
	Without loss of generality, we may assume \(\Gammaii\) contains only rational components.
	Because the Conway sphere is assumed to be essential, $T_1$ is not a rational tangle, so by Proposition~\ref{prop:NoLspaceKnot:IrrRat}, the components of $\Gammaii$ must all have the same slope. 
	By Theorem~\ref{thm:detection:split}, the corresponding tangle $T_2$ is split, which contradicts the assumption that the Conway sphere is essential.
\end{proof}

The rest of this section is devoted to the proofs of Propositions~\ref{prop:NoLspaceKnot:IrrIrr} and~\ref{prop:NoLspaceKnot:IrrRat}.  We begin with a corollary of Lemma~\ref{lem:lspacestructure} which will be central to both proofs.

\begin{lemma}\label{lem:lspacepinch}
	Let \(T_1, T_2\) be tangles such that \(T_1 \cup T_2\) is an L-space knot. For \(i=1,2\), suppose \(s_i\in\QPI\) is the slope of a rational component of \(\HFT(T_i)\).  Then, \(T_1 \cup Q_{s_2}\) and \(Q_{s_1} \cup T_2\) are also L-space knots.
\end{lemma}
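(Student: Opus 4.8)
The two assertions are symmetric, since $T_1\cup T_2=T_2\cup T_1$ by Figure~\ref{fig:tanglepairing}, so I focus on $T_1\cup Q_{s_2}$. The plan is to exhibit $\HFKhat(T_1\cup Q_{s_2})$ as a direct summand of $\HFKhat(T_1\cup T_2)$ and then invoke Lemma~\ref{lem:lspacestructure}. First I would check that $T_1\cup Q_{s_2}$ is a knot: by Lemma~\ref{lem:HFT_detects_connectivity} the rational curve $\Rational(s_2)$ separates the four punctures the way the two open strands of $T_2$ do, so $Q_{s_2}$ and $T_2$ realize the same matching of the tangle ends; since $T_1\cup T_2$ is a knot, $T_1$ has no closed components, and exactly as in Corollary~\ref{cor:pairing-knots} one concludes that $T_1\cup Q_{s_2}$ again has a single component.

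Next, write $\HFT(T_2)=\Rational(s_2)\sqcup\Gamma'$, where $\Gamma'$ collects the remaining components. Since Lagrangian Floer homology is additive over disjoint unions of curves,
\[
\HF\big(\mr\HFT(T_1),\HFT(T_2)\big)\cong\HF\big(\mr\HFT(T_1),\Rational(s_2)\big)\oplus\HF\big(\mr\HFT(T_1),\Gamma'\big),
\]
and by Theorem~\ref{thm:GlueingTheorem:HFT} (applied to the knots $T_1\cup T_2$ and $T_1\cup Q_{s_2}$, using $\HFT(Q_{s_2})=\Rational(s_2)$) the left-hand side is $\HFKhat(T_1\cup T_2)\otimes V$ and the first term on the right is $\HFKhat(T_1\cup Q_{s_2})\otimes V$. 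I would then show that the second term is also of the form $W''\otimes V$: splitting $\Gamma'$ into its components, which are rational or special by Theorem~\ref{thm:geography_of_HFT}, each rational component $\Rational(s')$ contributes $\HF(\mr\HFT(T_1),\Rational(s'))\cong\HFKhat(T_1\cup Q_{s'})\otimes V$ (again a knot closure, by the connectivity argument above), while each special component is a figure-eight curve whose turn-backs at its two punctures force the intersection points with $\mr\HFT(T_1)$ into once-punctured bigon pairs; by Lemma~\ref{lem:Alex:bigon} and Remark~\ref{rem:delta:HF} the two points of such a pair occupy consecutive Alexander gradings and a single $\delta$-grading, which is exactly the bigraded shape of a $V$-factor. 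Summing, $\HF(\mr\HFT(T_1),\Gamma')\cong W''\otimes V$.

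Granting this, $\big(\HFKhat(T_1\cup Q_{s_2})\oplus W''\big)\otimes V\cong\HFKhat(T_1\cup T_2)\otimes V$; comparing Poincaré polynomials, which live in an integral Laurent polynomial ring (a domain) and $V\neq 0$, gives $\HFKhat(T_1\cup Q_{s_2})\oplus W''\cong\HFKhat(T_1\cup T_2)$. By Corollary~\ref{cor:lspace-ordered-short}, $\HFKhat(T_1\cup T_2)\cong\bigoplus_k\F_{(M_k,A_k)}$ with $M_k<M_{k+1}$ and $A_k<A_{k+1}$; the Maslov gradings being distinct, this bigraded vector space is multiplicity-free, so its direct summand $\HFKhat(T_1\cup Q_{s_2})$ equals $\bigoplus_{k\in S}\F_{(M_k,A_k)}$ for some subset $S$, still strictly increasing in both gradings (and $|S|$ odd, as $T_1\cup Q_{s_2}$ is a knot). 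Thus Lemma~\ref{lem:lspacestructure} applies and $T_1\cup Q_{s_2}$ is an L-space knot; the same argument with the roles of $T_1$ and $T_2$ interchanged gives the claim for $Q_{s_1}\cup T_2$.

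The main obstacle is the claim that $\HF(\mr\HFT(T_1),\Gamma')$ carries a $\otimes V$ factorization — equivalently, that $\HFr(\mr\HFT(T_1),\Gamma')$ is even defined — since $\Gamma'$ is not itself a tangle invariant and, unlike rational components, special components cannot be peeled off one at a time as tangle closures. Making this precise requires the local figure-eight analysis of special curves sketched above (the same mechanism used throughout Section~\ref{sec:pairings}), or, alternatively, realizing conjugate pairs of special components together with an auxiliary rational component as invariants of explicit tangles, such as twisted pretzel tangles via Theorem~\ref{thm:HFT:Twisting}, and feeding those into the gluing theorem.
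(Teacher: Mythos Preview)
Your strategy is exactly the paper's: exhibit $\HFKhat(T_1\cup Q_{s_2})$ as a relatively bigraded summand of $\HFKhat(T_1\cup T_2)$, then invoke Corollary~\ref{cor:lspace-ordered-short} and Lemma~\ref{lem:lspacestructure}. The paper's proof, however, is a single sentence: it simply asserts that the summand relation ``follows from the pairing theorem'' and says nothing about the $V$-factor in the complement.

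The subtlety you flag is genuine --- $B\otimes V\hookrightarrow A\otimes V$ does not force $B\hookrightarrow A$ for bigraded vector spaces in general --- and your proposed fix is the right one. For rational components of $\Gamma'$ the gluing theorem applied to the corresponding knot closures (your connectivity argument via Lemma~\ref{lem:HFT_detects_connectivity} is correct) supplies the $V$-factor directly. For a special component, the bigon pairs can in fact all be taken around a \emph{single} puncture: this is precisely how $\HFr$ is extracted from $\HF$ in the proofs of Lemmas~\ref{lem:computation:specials} and~\ref{lem:computation:mixed}, where every bigon used to halve the generating set covers~$\TEI$. With all bigons around one fixed puncture the Alexander shift is uniform and the factorization through $V$ is clean. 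Your cancellation of $V$ via Poincar\'e polynomials and the final application of Lemma~\ref{lem:lspacestructure} then go through. So your proof is correct and simply more explicit than the paper's about a point the paper leaves to the reader; the alternative route you mention at the end (realizing conjugate pairs of specials inside pretzel-tangle invariants) is unnecessary.
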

\begin{proof}
	By symmetry of the tangle decomposition, it suffices to show this for \(i=1\). 
	It follows from the pairing theorem that \(\HFKhat(Q_{s_1} \cup T_2)\) is a summand of \(\HFKhat(T_1 \cup T_2)\), up to an absolute shift of Maslov and Alexander gradings.  Since \(T_1 \cup T_2\) satisfies the hypotheses of Lemma~\ref{lem:lspacestructure} by Corollary~\ref{cor:lspace-ordered-short}, the same is true of \(Q_{s_1} \cup T_2\), and hence it is an L-space knot.  
\end{proof}
 
\begin{corollary}\label{cor:Lspacepinch}
	Suppose \(K\) is an L-space knot. Then, for any rational component \(\textcolor{red}{\gamma}\) of \(\Gammai\), the subspace \(W=\HFr(\textcolor{red}{\gamma},\Gammaii)\) of \(\HFr(\Gammai,\Gammaii)\) satisfies the condition from Lemma~\ref{lem:convex}. 
 	The same holds for the subspace \(\HFr(\Gammai,\textcolor{blue}{\gamma'})\) for any rational component \(\textcolor{blue}{\gamma'}\) of \(\Gammaii\).
 	\qed
\end{corollary}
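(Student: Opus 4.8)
The plan is to identify $W$ with the knot Floer homology of an auxiliary L-space knot obtained from $K$ by a rational tangle replacement, and then appeal to Lemma~\ref{lem:convex}. Fix a rational component $\textcolor{red}{\gamma}$ of $\Gammai=\mr(\HFT(T_1))$. Since $\mr$ is an involution of $\FourPuncturedSphere$ carrying rational curves to rational curves, $\mr(\textcolor{red}{\gamma})$ is a rational component of $\HFT(T_1)$; let $s\in\QPI$ be its slope, so that $\HFT(Q_s)=\Rat(s)=\mr(\textcolor{red}{\gamma})$ and hence $\mr(\HFT(Q_s))=\textcolor{red}{\gamma}$. By Lemma~\ref{lem:HFT_detects_connectivity}, $\mr(\textcolor{red}{\gamma})$ records how the open components of $T_1$ join the tangle ends, so $Q_s$ joins its ends the same way and $J\coloneqq Q_s\cup T_2$ is again a knot; by Lemma~\ref{lem:lspacepinch} it is in fact an L-space knot. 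Applying Theorem~\ref{thm:GlueingTheorem:HFT:HFr} to $J$ then gives
\[
W=\HFr(\textcolor{red}{\gamma},\Gammaii)=\HFr\bigl(\mr(\HFT(Q_s)),\HFT(T_2)\bigr)\cong\HFKhat(J)
\]
as relatively bigraded vector spaces; in particular $W$ is well defined and, by Corollary~\ref{cor:lspace-ordered-short}, it is relatively bigraded isomorphic to the knot Floer homology of an L-space knot --- this is the first half of the hypothesis of Lemma~\ref{lem:convex}.

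Next I would check that $W$ may be regarded as a graded subspace of $\HFr(\Gammai,\Gammaii)=\HFKhat(K)$ spanned by some of the distinguished basis vectors $x_{-\ell},\dots,x_\ell$, so that Lemma~\ref{lem:convex} applies literally. As recorded in the proof of Lemma~\ref{lem:lspacepinch}, the pairing theorem exhibits $\HFKhat(J)$ --- which is exactly the summand of $\HFr(\Gammai,\Gammaii)$ coming from the component $\textcolor{red}{\gamma}$ of $\Gammai$ --- as a relatively bigraded direct summand of $\HFKhat(K)$, up to an overall bigrading shift. Since $K$ is an L-space knot, $\HFKhat(K)$ is skeletal (Example~\ref{ex:LspaceKnot_are_skeletal}), i.e.\ at most one-dimensional in each Alexander grading; so, after normalizing the shift, every relatively bigraded direct summand of $\HFKhat(K)$ is the span of a subset of $\{x_{-\ell},\dots,x_\ell\}$, and we regard $W$ as such a subspace. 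If $W=\HFKhat(K)$ the convexity assertion of Lemma~\ref{lem:convex} holds vacuously, and if $W\subsetneq\HFKhat(K)$ then $W$ satisfies the full hypothesis of Lemma~\ref{lem:convex}; in either case $W$ satisfies the condition from Lemma~\ref{lem:convex}.

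The statement for a rational component $\textcolor{blue}{\gamma'}$ of $\Gammaii=\HFT(T_2)$ is proved the same way, now without the mirror step: writing $s'$ for the slope of $\textcolor{blue}{\gamma'}$, we have $\textcolor{blue}{\gamma'}=\HFT(Q_{s'})$, so $\HFr(\Gammai,\textcolor{blue}{\gamma'})\cong\HFKhat(T_1\cup Q_{s'})$ by Theorem~\ref{thm:GlueingTheorem:HFT:HFr}, and Lemma~\ref{lem:lspacepinch} applied with $i=2$ shows that $T_1\cup Q_{s'}$ is an L-space knot whose knot Floer homology is a relatively bigraded summand of $\HFKhat(K)$; one concludes as before. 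The only real subtlety here is bookkeeping: tracking the mirror involution $\mr$ in the first case so that the mirror-free Lemma~\ref{lem:lspacepinch} and the mirrored gluing theorem line up, together with the passage from ``relatively bigraded direct summand'' to ``subspace spanned by basis vectors'', which is precisely where skeletality of $\HFKhat(K)$ is used. Neither step is difficult, so beyond correctly assembling the already-established inputs there is no genuine obstacle.
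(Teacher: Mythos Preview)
Your argument is correct and follows the same route the paper intends: the paper simply marks the corollary with a \qed, treating it as immediate from Lemma~\ref{lem:lspacepinch}, and you have written out precisely those details---identifying $W$ with $\HFKhat$ of the rational-replacement knot via the gluing theorem, invoking Lemma~\ref{lem:lspacepinch} to see this knot is L-space, and then feeding this into Lemma~\ref{lem:convex}. Your extra care with the mirror bookkeeping and the $W=\HFKhat(K)$ edge case is fine and not in tension with anything in the paper.
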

 
\begin{proof}[Proof of Proposition~\ref{prop:NoLspaceKnot:IrrIrr}]
	Suppose for contradiction that \(K\) is an L-space knot. 
	By naturality under twisting (Theorem~\ref{thm:HFT:Twisting}), we may assume without loss of generality that \(\Gammai\) contains a special component \(\textcolor{red}{\Special_\alpha(0;\TEIV,\TEI)}\) for some \(\alpha\in\Z^{>0}\). 
	By Corollary~\ref{cor:lspace-ordered-short}, \(K\) is skeletal, so by Lemma~\ref{lem:reduction:specials}, the special components of \(\Gammaii\) all have slope 0. Then by conjugation symmetry (Theorem~\ref{thm:Conjugation}), \(\Gammaii\) in particular contains a component \(\textcolor{blue}{\Special_\beta(0;\TEIV,\TEI)}\) for some \(\beta\in\Z^{>0}\). 
	Repeating this argument with reversed roles of \(T_1\) and \(T_2\), we see that also the slope of any special component of \(\Gammai\) vanishes. 
	Moreover, we may assume without loss of generality that \(\beta\leq\alpha\) by interchanging \(T_1\) and \(T_2\) if necessary.  
	
	Now, by Lemma~\ref{lem:computation:specials} in conjunction with the fact that \(K\) is skeletal,
	\[
	\HFr(\textcolor{red}{\Special_\alpha(0;\TEIV,\TEI)},\textcolor{blue}{\Special_\beta(0;\TEIV,\TEI)})
	=
	\delta^0 t^{-\alpha} \Contiguous{2\beta}
	\oplus
	\delta^{\pm1} t^\alpha \Contiguous{2\beta}
	\]
	up to some overall shift in bigrading.
	Moreover, since the tangle ends \(\TEI\) and \(\TEIV\) are oriented in the same direction, the slope of any rational component of \(\Gammai\) or \(\Gammaii\) is non-zero. 
	Let \(g_1\) be the generator in maximal Alexander grading of the first summand and \(g_2\) the generator in minimal Alexander grading of the second summand. Then \(A(g_2)-A(g_1)=2(\alpha-\beta)+1\) is odd and so is \(\delta(g_2)-\delta(g_1)=\pm1\), hence, \(M(g_2)-M(g_1)\) is even.  By Theorem~\ref{thm:lspace-ordered-stronger}, the number of generators whose Maslov gradings lie strictly between \(M(g_1)\) and \(M(g_2)\) is odd. Equivalently, the number of generators whose Alexander gradings lie strictly between \(A(g_1)\) and \(A(g_2)\) is odd.  
	We claim that this implies the existence of rational components \(\textcolor{red}{\gamma}\) of \(\Gammai\) and \(\textcolor{blue}{\gamma'}\) of \(\Gammaii\) with the property that \(\HFr(\textcolor{red}{\gamma},\textcolor{blue}{\gamma'})\) is a non-zero contiguous vector space which is supported in Alexander gradings that lie strictly between \(A(g_1)\) and \(A(g_2)\).
	Indeed, by Lemma~\ref{lem:computation:specials}, a skeletal \(\HFr\)-pairing of any two special components of  \(\Gammai\) and \(\Gammaii\) consists of two contiguous even-dimensional summands. Moreover, by Lemma~\ref{lem:reduction:mixed}, the \(\HFr\)-pairing between any rational and any special component is skeletal only if the slope of the rational component is 0 or \(\tfrac{1}{n}\) for some integer \(n\). However, as noted above, the slope of the rational component cannot be 0. If the slope is \(\tfrac{1}{n}\), we can apply Lemma~\ref{lem:computation:mixed} to deduce that the pairing is contiguous and even-dimensional.
	Finally, by Lemma~\ref{lem:2bridge}, a skeletal \(\HFr\)-pairing of any two rationals is contiguous. 
	
	In summary, we have identified two non-zero summands of \(\HFr(\Gammai,\Gammaii)\), which in the same graphical notation as in Remark~\ref{rem:gaps} look as follows:
	\[
		\vc{%
			\begin{tikzpicture}[scale=0.5]
			\draw[->] (0,0) node [left] {\phantom{\(A\)}} -- (16,0) node[right]{\(A\)};
			\draw (1,-0.25) -- (1,0.25) node[above] {\(\circ\)};
			\draw (2,-0.25) -- (2,0.25) node[above] {\(\circ\)};
			\draw (3,0.25) node [above] {\(\cdots\)};
			\draw (4,-0.25) -- (4,0.25) node[above] {\(\circ\)};
			\draw (5,-0.25) -- (5,0.25) node[above] {\(\circ\)} ++(0,0.5) node[above] {\(\scriptstyle g_1\)};
			\draw [decorate,decoration={brace}] (5.25,-0.4) -- (0.75,-0.4) node[midway, below] {\(\scriptstyle 2\beta\)};
			\draw (6,-0.25) -- (6,0.25);
			\draw (7,-0.25) -- (7,0.25);
			\draw [<-] (8,0.75) .. controls (8,1.5) and (7,2.5) .. (8.5,2.5) node [right] {\(\HFr(\textcolor{red}{\gamma},\textcolor{blue}{\gamma'})\)};
			\draw (9,-0.25) -- (9,0.25);
			\draw (10,-0.25) -- (10,0.25);
			\draw [decorate,decoration={brace}] (10.25,-0.4) -- (5.75,-0.4) node[midway, below] {\(\scriptstyle 2(\alpha-\beta)\)};
			\draw (11,-0.25) -- (11,0.25) node[above] {\(\circ\)} ++(0,0.5) node[above] {\(\scriptstyle g_2\)};
			\draw (12,-0.25) -- (12,0.25) node[above] {\(\circ\)};
			\draw (13,0.25) node [above] {\(\cdots\)};
			\draw (14,-0.25) -- (14,0.25) node[above] {\(\circ\)};
			\draw (15,-0.25) -- (15,0.25) node[above] {\(\circ\)};
			\draw [decorate,decoration={brace}] (15.25,-0.4) -- (10.75,-0.4) node[midway, below] {\(\scriptstyle 2\beta\)};
			\end{tikzpicture}}
	\]
	where \(\circ\in\HFr(\textcolor{red}{\Special_\alpha(0;\TEIV,\TEI)},\textcolor{blue}{\Special_\beta(0;\TEIV,\TEI)})\).
	Now observe that  \(\HFr(\textcolor{red}{\Special_\alpha(0;\TEIV,\TEI)},\textcolor{blue}{\gamma'})\) is non-zero, since the slope of \(\textcolor{blue}{\gamma'}\) is non-zero as noted above. Moreover, since this summand is skeletal, it is contiguous of dimension \(2\alpha>2(\alpha-\beta)\) by Lemmas~\ref{lem:reduction:mixed} and~\ref{lem:computation:mixed}.  In particular, it is supported in Alexander gradings that are either strictly bigger than the maximum Alexander grading or strictly smaller than the minimal Alexander grading of \(\HFr(\textcolor{red}{\Special_\alpha(0;\TEIV,\TEI)},\textcolor{blue}{\Special_\beta(0;\TEIV,\TEI)})\).  
	Using the summands \(\HFr(\textcolor{red}{\gamma},\textcolor{blue}{\gamma'})\), \(\HFr(\textcolor{red}{\Special_\alpha(0;\TEIV,\TEI)},\textcolor{blue}{\Special_\beta(0;\TEIV,\TEI)})\), and \(\HFr(\textcolor{red}{\Special_\alpha(0;\TEIV,\TEI)},\textcolor{blue}{\gamma'})\), we see that the conditions of Lemma~\ref{lem:convex} are satisfied for the subspace \(W=\HFr(\Gammai,\textcolor{blue}{\gamma'})\). This violates Corollary~\ref{cor:Lspacepinch}.  
\end{proof}

\begin{remark}
	Note that if \(\alpha=\beta\) in the proof above, there cannot be any generator whose Alexander grading lies strictly between \(A(g_1)\) and \(A(g_2)\) for trivial reasons, since \(A(g_2)-A(g_1)=1\). There are currently no known examples of non-split tangles \(T\) such that \(\HFT(T)\) does not contain a special component of length \(1\). 
\end{remark}

It remains to prove Proposition~\ref{prop:NoLspaceKnot:IrrRat}. From now on, we will always assume the hypotheses of this proposition.
First, we introduce some notation.
Let \(\CC{1},\ldots, \CC{m}\) be the components of \(\Gammai\), and let \(\DD{1},\ldots, \DD{n}\) be the components of \(\Gammaii\). 
By Corollary~\ref{cor:pairing-knots:local_systems}, we may assume that all local systems are trivial, so we will treat curves with \(v\)-dimensional local systems as \(v\) distinct components.
By assumption, the tangle \(T_1\) is non-rational, so it follows from Theorem~\ref{thm:HFT:rational_tangle_detection} that $\Gammai$ contains either a special component or multiple rational components. In the first case, there are at least two special components by conjugation symmetry (Theorem~\ref{thm:Conjugation}); in both cases, the number of rational components is odd (Proposition~\ref{prop:Odd_number_of_rationals}). So in either case, \(m\geq3\). Similarly, \(n\geq3\). 
By assumption, \(\DD{i}\) is rational for all \(i=1,\dots,n\) and not all of these components have the same slope.
By Proposition~\ref{prop:Odd_number_of_rationals}, we may further assume without loss of generality that \(\CC{1}\) is rational. 
Finally, let us write \(W_{i,j} = \HFr(\CC{i},\DD{j})\). 

\begin{lemma}\label{lem:contiguity}
	If \(K\) is skeletal,  \(W_{i,j}\) is non-zero and contiguous for any \(i,j\). 
\end{lemma}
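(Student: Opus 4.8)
The plan is to examine each pair $(\CC{i},\DD{j})$ separately, using that $\DD{j}$ is rational (the standing hypothesis of Proposition~\ref{prop:NoLspaceKnot:IrrRat}) while $\CC{i}$ is either rational or special (Theorem~\ref{thm:geography_of_HFT}). The starting point is that $W_{i,j}$ is a relatively bigraded direct summand of $\HFr(\Gammai,\Gammaii)\cong\HFKhat(K)$: since $\HF(\Gammai,\Gammaii)=\bigoplus_{k,\ell}\HF(\CC{k},\DD{\ell})$ and each $\HF(\CC{k},\DD{\ell})$ contains the factor $V$ — by the gluing theorem (Theorem~\ref{thm:GlueingTheorem:HFT}) when $\CC{k}$ is rational, and by the bigons over a fixed puncture appearing in the proofs of Lemmas~\ref{lem:computation:specials} and~\ref{lem:computation:mixed} when $\CC{k}$ is special — cancelling $V$ identifies $\HFKhat(K)$ with $\bigoplus_{k,\ell}W_{k,\ell}$. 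As $K$ is skeletal, so is $\HFKhat(K)$, and therefore so is each $W_{i,j}$.

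If $\CC{i}$ is rational, then $\mr\CC{i}$ is a rational component of $\HFT(T_1)$ of some slope $r$, and writing $s$ for the slope of $\DD{j}=\Rational(s)$, Theorem~\ref{thm:GlueingTheorem:HFT:HFr} identifies $W_{i,j}$, as a relatively bigraded vector space, with $\HFKhat(Q_r\cup Q_s)$; here $Q_r\cup Q_s$ is a knot by Corollary~\ref{cor:pairing-knots}. It is two-bridge, and skeletal because $W_{i,j}$ is, so Lemma~\ref{lem:2bridge} shows it is a torus knot $T(2,n)$ whose knot Floer homology — hence $W_{i,j}$ — is non-zero and contiguous.

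If $\CC{i}$ is special, then by naturality under twisting (Theorem~\ref{thm:HFT:Twisting}) we may assume $\CC{i}=\Special_\alpha(0;\TEIV,\TEI)$ for some $\alpha\in\Z^{>0}$ and $\DD{j}=\Rational(s)$. Because $W_{i,j}$ is skeletal, Lemma~\ref{lem:reduction:mixed} forces $s\in\{0\}\cup\{\tfrac1n\mid n\in\Z\}$. I would exclude $s=0$ by a connectivity argument: the special component $\CC{i}$ and the rational component $\mr\CC{1}$ both lie in $\Gammai$ and therefore realise the same matching of the four tangle ends (the analogue of Lemma~\ref{lem:HFT_detects_connectivity} for special components), and the shape $\Special_\alpha(0;\TEIV,\TEI)$ means this matching pairs $\TEIV$ with $\TEI$; the same is true of the matching induced by a line of slope $0$ through $\tilde{\TEIV}$ and $\tilde{\TEI}$, so if $s=0$ then $Q_r$ and $Q_s$ (with $r$ the slope of $\mr\CC{1}$) would induce the same matching and $Q_r\cup Q_s$ would have two components, contradicting Corollary~\ref{cor:pairing-knots}. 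Hence $s=\tfrac1n$, and Lemma~\ref{lem:computation:mixed} gives $W_{i,j}=\Contiguous{2\alpha}$, which is non-zero and contiguous.

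I expect the main obstacle to be the two bookkeeping points above: verifying that the factor $V$ splits off uniformly, so that each $W_{i,j}$ is a genuine relatively bigraded summand of $\HFKhat(K)$ (and hence skeletal), and excluding the slope $s=0$ in the special--rational case. The latter cannot be handled by a non-skeletality argument, since that pairing could well be skeletal or even zero, so the connectivity constraint from Corollary~\ref{cor:pairing-knots} (together with the connectivity-detection property of $\HFT$) is essential.
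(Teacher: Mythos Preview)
Your treatment of the rational--rational case and the overall organisation agree with the paper. The gap is exactly where you anticipated it: the exclusion of \(s=0\) in the special--rational case.

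Your argument there rests on the claim that the presence of \(\Special_\alpha(0;\TEIV,\TEI)\) in \(\Gammai\) forces the connectivity of \(T_1\) to pair \(\TEIV\) with \(\TEI\)---an ``analogue of Lemma~\ref{lem:HFT_detects_connectivity} for special components.'' No such statement appears in the paper, and you give no argument for it. Without it you cannot conclude that \(\CC{1}\) separates \(\{\TEI,\TEIV\}\) from \(\{\TEII,\TEIII\}\), so the appeal to Corollary~\ref{cor:pairing-knots} does not go through. (Note also that the wording ``the rational component \(\mr\CC{1}\) \dots\ lie in \(\Gammai\)'' is off: it is \(\CC{1}\) that lies in \(\Gammai\).)

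The paper sidesteps this entirely by working on the \(\Gammaii\) side and using the standing hypothesis of Proposition~\ref{prop:NoLspaceKnot:IrrRat} that not all components of \(\Gammaii\) have the same slope. If \(W_{i,j}=0\) then \(\CC{i}\) must be special and share the slope of the rational curve \(\DD{j}\); after twisting both have slope \(0\), so \(\DD{j}\) separates \(\{\TEI,\TEIV\}\) from \(\{\TEII,\TEIII\}\). Now choose \(\DD{j_*}\) of a different slope: since \(W_{i,j_*}\) is skeletal, Lemma~\ref{lem:reduction:mixed} forces that slope to be \(\tfrac{1}{p}\), and such a curve separates \(\TEI\) from \(\TEIV\). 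Thus \(\DD{j}\) and \(\DD{j_*}\) are two rational components of \(\HFT(T_2)\) inducing different partitions of the punctures, contradicting Lemma~\ref{lem:HFT_detects_connectivity}. This uses only the connectivity statement for \emph{rational} curves, which is all that is available. Once non-vanishing is secured in this way, your contiguity argument via Lemmas~\ref{lem:reduction:mixed} and~\ref{lem:computation:mixed} is exactly what the paper does.
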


\begin{proof}
	Suppose $W_{i,j} = 0$.
	By Corollary~\ref{cor:pairing-knots}, \(\CC{i}\) and \(\DD{j}\) cannot be rational curves of the same slope, since \(K\) is a knot. Therefore, \(\CC{i}\) must be special and must have the same slope as the rational curve \(\DD{j}\). 
	After a reparametrization of the boundary, we may assume that they both have slope 0. This means that $\DD{j}$ separates the tangle ends \(\TEI\) and \(\TEIV\) from \(\TEII\) and \(\TEIII\). Since not all components of $\Gammaii$ have the same slope, there exists some $j_*$ such that $\DD{j_*}$ has slope $\frac{1}{p}$ for some $p \in \mathbb{Z}$, by Lemma~\ref{lem:reduction:mixed}.  This means that $\DD{j_*}$ separates the puncture \(\TEI\) from \(\TEIV\). This contradicts Lemma~\ref{lem:HFT_detects_connectivity}.
	
	If \(\CC{i}\) is rational, 
	\(W_{i,j}\) is \(\HFKhat\) of a knot (up to a grading shift), so contiguity follows from Lemma~\ref{lem:2bridge}.
	If \(\CC{i}\) is special, contiguity follows from naturality under twisting and Lemmas~\ref{lem:reduction:mixed} and~\ref{lem:computation:mixed}.
\end{proof}

By the gluing theorem, 
\[
\HFKhat(K)\cong \HFr(\Gammai,\Gammaii)=\bigoplus W_{i,j}
\]
where the direct sum is over all \(i=1,\dots,n\) and \(j=1,\dots,m\). 
Let us assume from now on that \(K\) is skeletal.
Then by Lemma~\ref{lem:contiguity}, each summand $W_{i,j}$ is non-zero and contiguous, so we can order the contributions from the various \(W_{i,j}\) based on their Alexander gradings.   

\begin{definition}
  We write \((i,j) < (p,q)\) if \(W_{i,j}\) is supported in lower Alexander grading than \(W_{p,q}\). 
\end{definition}

\begin{lemma}\label{lem:ordering}
  For any \(k,\ell\in\{1,\dots,n\}\), \((2,k) < (1,k)\) implies \((2,\ell) < (1,\ell)\).
\end{lemma}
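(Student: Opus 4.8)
The plan is to deduce this monotonicity from the quadrilateral relation of Lemma~\ref{lem:Alex:ordering}, applied to the four curves \(\CC{1},\CC{2},\DD{k},\DD{\ell}\), using crucially that one of the four intersection points appearing in that relation may be prescribed freely. First I would check the hypotheses of Lemma~\ref{lem:Alex:ordering}. The curve \(\CC{1}\) is rational by our standing normalization and \(\DD{k}\) is rational by the hypothesis of Proposition~\ref{prop:NoLspaceKnot:IrrRat}, while \(\CC{2}\) is rational or special; and, as observed in the proof of Lemma~\ref{lem:contiguity}, no \(\CC{i}\) can have the same slope as any \(\DD{j}\), for otherwise \(W_{i,j}\) would vanish, contradicting Lemma~\ref{lem:contiguity}. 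Hence Lemma~\ref{lem:Alex:ordering} applies with \((\gamma_1,\gamma_2,\vartheta_1,\vartheta_2)=(\CC{1},\CC{2},\DD{k},\DD{\ell})\): for \emph{any} intersection point \(x_{2\ell}\in\HF(\CC{2},\DD{\ell})\) there are points \(x_{1k}\in\HF(\CC{1},\DD{k})\), \(x_{2k}\in\HF(\CC{2},\DD{k})\), \(x_{1\ell}\in\HF(\CC{1},\DD{\ell})\) with
\[
A(x_{2\ell})+A(x_{1k})=A(x_{2k})+A(x_{1\ell}).
\]

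The key step is to choose \(x_{2\ell}\) to have maximal Alexander grading in \(\HF(\CC{2},\DD{\ell})\). Write \(W_{i,j}\) as supported in Alexander gradings \([\lambda_{ij},\mu_{ij}]\); since \(\HF(\CC{i},\DD{j})\cong W_{i,j}\otimes V\) with \(V\) concentrated in two consecutive Alexander gradings, normalized to \(\{0,1\}\), the support of \(\HF(\CC{i},\DD{j})\) is \([\lambda_{ij},\mu_{ij}+1]\). The hypothesis \((2,k)<(1,k)\), together with skeletality of \(K\) (so the supports of the \(W_{i,j}\) are pairwise disjoint integer intervals), gives \(\mu_{2k}+1\le\lambda_{1k}\), hence \(A(x_{2k})\le\lambda_{1k}\le A(x_{1k})\), i.e.\ \(A(x_{1k})-A(x_{2k})\ge0\). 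Substituting into the relation and using \(A(x_{2\ell})=\mu_{2\ell}+1\) yields
\[
A(x_{1\ell})=A(x_{2\ell})+\bigl(A(x_{1k})-A(x_{2k})\bigr)\ge\mu_{2\ell}+1.
\]
On the other hand \(A(x_{1\ell})\le\mu_{1\ell}+1\), so \(\mu_{1\ell}\ge\mu_{2\ell}\); disjointness of the intervals \([\lambda_{1\ell},\mu_{1\ell}]\) and \([\lambda_{2\ell},\mu_{2\ell}]\) then forces \(W_{2,\ell}\) to lie entirely below \(W_{1,\ell}\), i.e.\ \((2,\ell)<(1,\ell)\); the case \(k=\ell\) is trivial.

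The step I expect to need the most care is the bookkeeping of the \(\otimes V\) factor: comparing the supports of \(W_{2,k}\) and \(W_{1,k}\) directly, rather than those of \(\HF(\CC{2},\DD{k})\) and \(\HF(\CC{1},\DD{k})\), leaves a one-grading ambiguity and an awkward boundary case in which the relevant summands are forced to be adjacent. Prescribing \(x_{2\ell}\) to be extremal, as above, is precisely what removes this ambiguity, after which the argument is clean; the remaining verifications (distinctness of slopes, \(V\) supported in two consecutive Alexander gradings, pairwise disjointness of supports from skeletality) are all immediate from results already established.
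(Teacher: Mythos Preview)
Your proof is correct and follows essentially the same route as the paper: both apply Lemma~\ref{lem:Alex:ordering} to the quadruple \(\CC{1},\CC{2},\DD{k},\DD{\ell}\) and exploit the freedom to prescribe \(x_{22}\) to be extremal. The only difference is a symmetric choice of which pair plays the role of \(\vartheta_2\): the paper sets \(\vartheta_2=\DD{k}\) and takes \(x_{22}\in\HF(\CC{2},\DD{k})\) \emph{minimal}, so that the hypothesis \((2,k)<(1,k)\) immediately yields a strict inequality \(A(x^2_k)<A(x^1_k)\); you set \(\vartheta_2=\DD{\ell}\) and take \(x_{22}\in\HF(\CC{2},\DD{\ell})\) \emph{maximal}, which requires the extra (but easy) step of bounding the unprescribed \(A(x_{2k})\) via the interval supports. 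Either choice works; the paper's is marginally cleaner because the prescribed extremal point sits on the ``hypothesis'' side rather than the ``conclusion'' side, so one gets strict inequalities directly without the \(\otimes V\) bookkeeping you flag at the end.
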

\begin{proof}
	Let us choose a generator \(x^{2}_{k}\in\HF(\CC{2},\DD{k})\) with minimal Alexander grading. After picking an overall absolute Alexander grading of \(\gamma_{1}=\CC{1}\), \(\gamma_{2}=\CC{2}\), \(\vartheta_{1}=\DD{\ell}\), and \(\vartheta_{2}=\DD{k}\), these curves satisfy the hypothesis of Lemma~\ref{lem:Alex:ordering}. 
	Indeed, by assumption \(\CC{1}\), $\DD{k}$, and $\DD{\ell}$ are rational. Hence the slope of \(\CC{1}\) is pairwise different from the slopes of $\DD{k}$ and $\DD{\ell}$ by Lemma~\ref{lem:HFT_detects_connectivity}. If \(\CC{2}\) is rational, then by the same argument the slope of \(\CC{2}\) is pairwise different from the slopes of $\DD{k}$ and $\DD{\ell}$; if \(\CC{2}\) is special, we can argue with Lemma~\ref{lem:contiguity}.
	Therefore, there exist generators
  \[
  x^{1}_{\ell}\in\HF(\CC{1},\DD{\ell}),
  \quad
  x^{2}_{\ell}\in\HF(\CC{2},\DD{\ell}),
  \quad\text{and}\quad
  x^{1}_{k}\in\HF(\CC{1},\DD{k})
  \]
  satisfying
  \[
  A(x^{2}_{k})+A(x^{1}_{\ell})
  =
  A(x^{2}_{\ell})+A(x^{1}_{k}).
  \]
  Now suppose, \((2,k) < (1,k)\). By minimality of \(A(x^{2}_{k})\), it follows that 
  \(A(x^{2}_{k})<A(x^{1}_{k})\), which together with the above identity implies 
  \(A(x^{2}_{\ell})<A(x^{1}_{\ell})\). Since we are assuming that \(W_{2,\ell}\) and \(W_{1,\ell}\) are supported in entirely distinct Alexander gradings, it follows that \((2,\ell) < (1,\ell)\). 
\end{proof} 


\begin{remark}
Lemma~\ref{lem:ordering} is false if \(\CC{1}\) is a special curve: One can use the curves \(\CC{1}=\Special_1(\infty;\TEI,\TEII)\),  \(\CC{2}=\Special_1(\infty;\TEIII,\TEIV)\), \(\DD{1}=\Rational(s)\), and \(\DD{2}=\Rational(-s)\) (for some slope \(s\in\mathbb{Z}\)) to construct a simple family of counterexamples, which may even be skeletal. 
\end{remark} 

\begin{proof}[Proof of Proposition~\ref{prop:NoLspaceKnot:IrrRat}]
	There are some \(k,\ell\) such that \((1,k)<(1,\ell)\). If \((1,k)<(2,k)\), then either
	\[
	\text{(i)}\quad (1,k)<(2,k)<(1,\ell) 
	\qquad\text{or}\qquad
	\text{(ii)}\quad (1,k)<(1,\ell)<(2,k).
	\]
	Conversely, if \((2,k)<(1,k)\), then also \((2,\ell)<(1,\ell)\) by Lemma~\ref{lem:ordering} and we are in one of the following cases:
	\[
	\text{(iii)}\quad (1,k)<(2,\ell)<(1,\ell)
	\qquad\text{or}\qquad
	\text{(iv)}\quad (2,\ell)<(1,k)<(1,\ell).
	\]
	We now set
 \[
 W=
 \begin{cases*}
 \HFr(\CC{1},\Gammaii)
 &
 cases (i) and (iii)
 \\
 \HFr(\Gammai,\DD{k})
 &
 case (ii)
 \\
 \HFr(\Gammai,\DD{\ell})
 &
 case (iv)
 \end{cases*}
 \]
 Now suppose \(K\) is an L-space knot.  But then in each of these cases, the subspace \(W\subset\HFr(\Gammai,\Gammaii)\) violates Corollary~\ref{cor:Lspacepinch}.  
\end{proof}

\newcommand*{\arxiv}[1]{\href{http://arxiv.org/abs/#1}{ArXiv:\ #1}}
\newcommand*{\arxivPreprint}[1]{\href{http://arxiv.org/abs/#1}{ArXiv preprint #1}}

\bibliographystyle{alpha}
\bibliography{0_references}
\end{document}